\documentclass[11pt]{article}

\usepackage[a4paper,margin=1in]{geometry}
\usepackage[T1]{fontenc}
\usepackage{lmodern}
\usepackage{amsmath,amsthm,amssymb,bm}
\usepackage{mathrsfs}
\usepackage{stmaryrd}
\usepackage{enumitem}
\usepackage{commath}
\usepackage{cases}
\usepackage{graphicx}
\usepackage{xcolor}
\usepackage{algorithm}
\usepackage{algpseudocode}
\usepackage{cite}
\usepackage{subcaption}
\usepackage{microtype}
\usepackage[colorlinks,allcolors=blue!60!black]{hyperref}
\newtheorem{theorem}{Theorem}[section]
\newtheorem{lemma}[theorem]{Lemma}
\newtheorem{proposition}[theorem]{Proposition}
\theoremstyle{remark}
\newtheorem{remark}[theorem]{Remark}
\numberwithin{algorithm}{section}
\numberwithin{equation}{section}

\newenvironment{keywords}{\par\medskip\noindent\textbf{Key words.}\ }{\par}
\newenvironment{MSCcodes}{\par\smallskip\noindent\textbf{MSC codes.}\ }{\par}
\newcommand{\email}[1]{\href{mailto:#1}{\nolinkurl{#1}}}

\allowdisplaybreaks[1]

\newcommand*\diff{\mathop{}\!\mathrm{d}}
\newcommand{\id}{\mathrm{id}}

\title{\bfseries An energy-stable parametric finite element framework for surface
diffusion with moving contact lines on curved substrates}

\author{Harald Garcke\thanks{Fakult\"at f\"ur Mathematik, Universit\"at
Regensburg, 93040 Regensburg, Germany (\email{harald.garcke@ur.de}).}
\and Robert N\"urnberg\thanks{Dipartimento di Matematica, Universit\`a di
Trento, 38123 Trento, Italy (\email{robert.nurnberg@unitn.it}).}
\and Rong Tang\thanks{Department of Applied Mathematics, The Hong Kong
Polytechnic University, Hung Hom, Hong Kong
(\email{claire.tang@polyu.edu.hk}).}}

\date{}
\renewcommand{\and}{\end{tabular}\hskip 2em\begin{tabular}[t]{c}}

\hypersetup{
  pdftitle={An energy-stable parametric finite element framework for surface diffusion with moving contact lines on curved substrates},
  pdfauthor={H. Garcke, R. N\"urnberg, and R. Tang}
}

\begin{document}

\maketitle

\begin{abstract}
We develop energy-stable parametric finite element methods for surface
diffusion of solid films with moving contact lines on curved substrates.
Discretizing both the film surface and the substrate patches allows the introduction of linear schemes that are unconditionally energy stable. A nonlinear variant of the schemes ensures that, in addition, the vertices of the substrate approximations remain attached to the true substrate throughout. Moreover, our framework can incorporate anisotropic surface energy densities, for both the film surface and the substrate, as well as different tangential motions.
Numerical experiments in two and three space dimensions illustrate the practicality of the introduced methods.
\end{abstract}

\begin{keywords}
solid-state dewetting, surface diffusion, moving contact line, curved
substrate, parametric finite element method, energy stability
\end{keywords}

\begin{MSCcodes}
65M60, 65M12, 53E10, 35K55, 74S05
\end{MSCcodes}

\section{Introduction}
\label{sec:intro}

In solid-state dewetting, a thin solid film on a substrate changes its shape
when it is annealed below its melting temperature.  Mass moves along surfaces
and interfaces rather than through the bulk, and the film may break up into
islands.  The process can damage microelectronic interconnects, and it can
also be used to make ordered nanoparticles for catalysis, sensing, and
nanowire growth; see the reviews of Thompson \cite{Thompson2012} and Leroy et
al.\ \cite{Leroy2016}.

In the sharp-interface model of Wang, Jiang, Bao, and Srolovitz
\cite{WangJiangBaoSrolovitz2015} for curves, extended to surfaces in
\cite{JiangZhaoBao2020} and to strongly anisotropic surface energies in
\cite{JiangWangZhaoSrolovitzBao2016}, the film surface \(\Gamma(t)\) evolves
by surface diffusion \cite{Mullins1957,CahnTaylor1994}, and its boundary
\(\partial\Gamma(t)\), the contact line, moves on the substrate
\(\Gamma_{\rm base}\).
The substrate is the disjoint union of the part
\(\Gamma_{\rm in}(t)\) wetted by the film and the dry part
\(\Gamma_{\rm out}(t)\).  With constant surface energy densities \(\gamma_0\),
\(\gamma_{\rm in}\), and \(\gamma_{\rm out}\) for the film/vapour,
film/substrate, and vapour/substrate interfaces, the total energy is
\begin{align}\label{eq:intro-energy}
  W=\gamma_0|\Gamma|+\gamma_{\rm in}|\Gamma_{\rm in}|
    +\gamma_{\rm out}|\Gamma_{\rm out}| .
\end{align}
The model reads
\[
  v\cdot n= \beta\gamma_0\,\Delta_\Gamma H\ \text{ on }\Gamma,\quad
  v\cdot n_b=0,\
  \gamma_0\,\mu_{\scriptscriptstyle\partial}\cdot\mu_b=\gamma_{\rm out}-\gamma_{\rm in},\
  \mu_{\scriptscriptstyle\partial}\cdot\nabla_\Gamma H=0\ \text{ on }\partial\Gamma .
\]
Here \(v\) is the velocity of \(\Gamma\), so that \(v\cdot n\) is its normal
velocity, \(H\) is the mean curvature, \(\beta>0\) is a surface mobility constant \cite{Mullins1957}, \(n\) and \(n_b\) are the unit normals
of \(\Gamma\) and \(\Gamma_{\rm base}\), and \(\mu_{\scriptscriptstyle\partial}\)
and \(\mu_b\) are the outward conormals of \(\Gamma\) and of
\(\Gamma_{\rm in}\) along \(\partial\Gamma\); see
Figure~\ref{fig:domain_2D}.
We assume \(|\gamma_{\rm out}-\gamma_{\rm in}|<\gamma_0\).
Then the second condition on \(\partial\Gamma\) is Young's law
\(\mu_{\scriptscriptstyle\partial}\cdot\mu_b=\cos\theta\), with the Young
angle \(\theta\in(0,\pi)\) given by
\(\cos\theta=(\gamma_{\rm out}-\gamma_{\rm in})/\gamma_0\).  The first keeps
the film attached to the substrate, and the third prevents mass flux.  The evolution can be interpreted as the \(H^{-1}\)-gradient flow of \(W\). In particular, it decreases the total energy, and so discretization methods should try to mimic this energy decay on the discrete level.
\begin{figure}[htbp]
  \centering
  \begin{subfigure}[b]{0.49\textwidth}
    \centering
    \includegraphics[width=\textwidth]{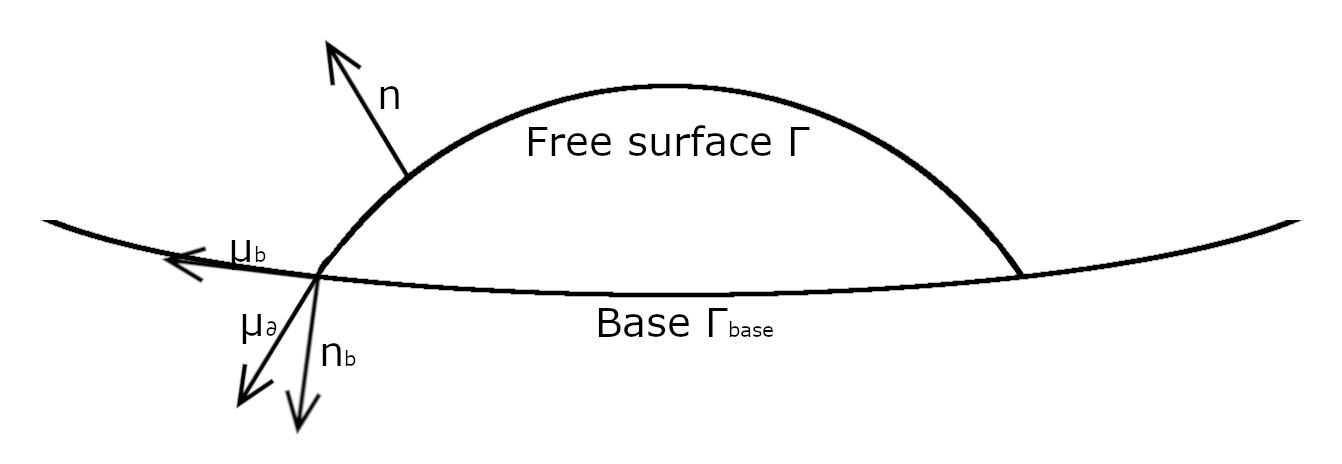}
    \caption{2D configuration}\label{domain}
  \end{subfigure}
  \hfill
  \begin{subfigure}[b]{0.49\textwidth}
    \centering
    \includegraphics[width=\textwidth]{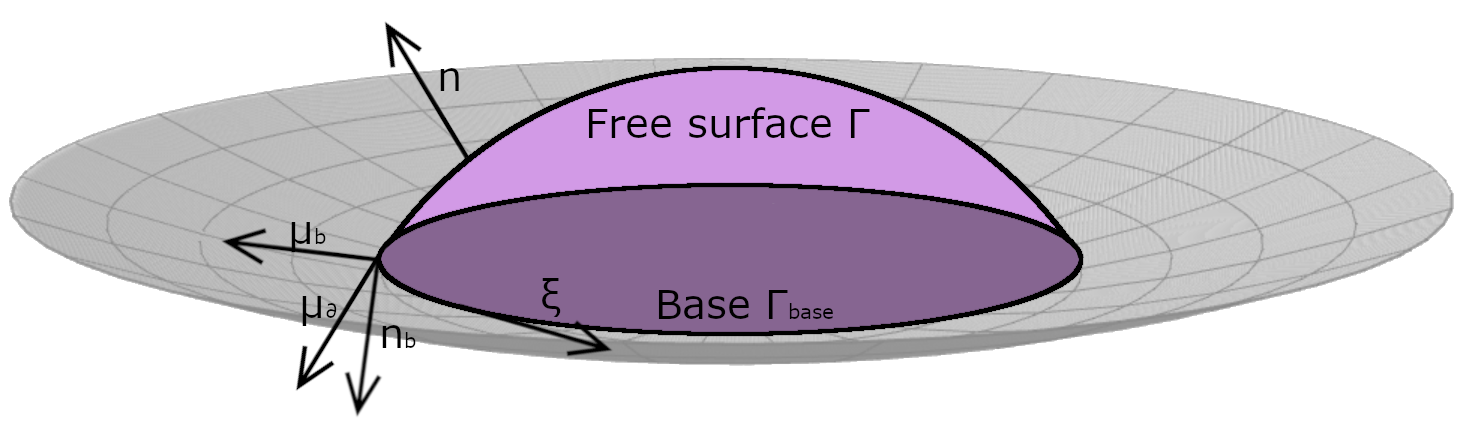}
    \caption{3D configuration}\label{domain_tube}
  \end{subfigure}
  \caption{Geometrical configurations.}
  \label{fig:domain_2D}
\end{figure}

Early finite element schemes for surface diffusion and related fourth-order
flows were developed for graphs and parametric surfaces by B\"ansch, Morin,
and Nochetto \cite{BMN2004,BMN2005}.  The parametric approach of Barrett,
Garcke, and N\"urnberg
\cite{BGN2007fourth,BGN2008hypersurfaces,BGN2020handbook}, commonly referred
to as the BGN method, leaves the tangential mesh velocity free.  This
freedom accommodates moving junctions \cite{BGN2010clusters,BGNZ2023networks}
and usually gives well-distributed meshes without remeshing.  Continuous-in-time, spatially discrete variants of the BGN approach are structure-preserving, in the sense that they decrease the surface energy while conserving the enclosed volume. The fully discrete BGN scheme also dissipates surface energy; however, it does not preserve the enclosed volume. A structure-preserving fully discrete BGN discretization for closed curves and surfaces was introduced by Bao and Zhao \cite{BaoZhao2021}. Extensions to curve networks and surface clusters were developed in \cite{BGNZ2023networks}, and an axisymmetric formulation was presented in \cite{BGNZ2022volume}. Structure-preserving schemes based on Lagrange multipliers were proposed in \cite{GaoLi2025MDR,GarckeJiangSuZhang2025}.

For solid-state dewetting on a flat substrate, parametric methods were
developed in two dimensions in \cite{BaoJiangWangZhao2017} and in three
dimensions in \cite{ZhaoJiangBao2020}; energy-stable variants were obtained
in \cite{ZhaoJiangBao2021,BaoZhao2023ssd3d}.  BGN methods also handle triple
junctions of curve networks and surface clusters, interfaces that meet a fixed
external boundary, and thermal grooving
\cite{BGN2007combined,BGN2008plane,BGN2010clusters,BGN2010grooving}.
Orientation-dependent surface energies have been treated by energy-stable
parametric formulations
\cite{BGN2008plane,BGN2008aniso,LiBao2021}, their symmetrized refinements
\cite{BaoJiangLi2023,BaoLi2023sym3d,BaoLi2024}, and diffuse-interface methods
\cite{GarckeKnopfNurnbergZhao2023}.

A curved substrate adds a geometric difficulty.  When the contact line moves, the areas of \(\Gamma_{\rm in}\) and \(\Gamma_{\rm out}\) change,
and on a curved substrate this change depends on where the contact line lies.
The curvature of the substrate also changes the dynamics: a small solid island migrates
along the curvature gradient of the substrate
\cite{ZhaoJiangWangSrolovitzQianBao2024}.  For general prescribed Young angles, existing parametric methods for
curved substrates treat curves: a curve attached to a prescribed substrate
curve \cite{BaoLiZhao2024curved}, or the generating curve of an axisymmetric
surface \cite{Zhang2025axisym}.  {{For surfaces in \(\mathbb R^3\) with non-right Young angles on general
curved substrates, we are not aware of a parametric method with a proved fully
discrete energy law.}}

We consider substrates given as the zero level set of a function \(\phi\).
The main idea of this paper is to mesh the substrate as well as the film
surface, so that the energies of all interfaces enter the discrete energy.
The substrate vertices move with the contact line, and at each of them the
discrete velocity is tangent to the substrate.  This gives two kinds of
schemes.  {The single-patch scheme glues the film surface to one
substrate patch, \(\Gamma_{\rm in}\) if \(\gamma_{\rm in}\ge\gamma_{\rm out}\)
and \(\Gamma_{\rm out}\) if \(\gamma_{\rm in}\le\gamma_{\rm out}\), and works
with this single surface, which is closed in the first case.  The full-substrate
scheme, on the other hand, meshes both patches, so the contact line becomes a triple junction of
three meshed surfaces.}  All these schemes are linear and
uniquely solvable, and applying the area inequality of
\cite{BGN2007fourth,BGN2008hypersurfaces} (see also \cite{Bansch2001}) on
every meshed surface shows that they are unconditionally energy stable.
Since the attachment constraint only makes the velocity of each substrate vertex
tangent to the substrate, the vertices may leave a curved substrate. Upon utilizing a degree of freedom in the choice of
discrete substrate normals, we are able to extend these linear schemes to a mildly nonlinear variant, which retains all
the desirable properties and, in addition, guarantees that the substrate vertices stay exactly on the substrate.
Extending the full-substrate scheme to anisotropic film and substrate energies is straightforward.
 To our knowledge, anisotropic substrate energies have not been treated by
parametric methods for curved substrates.

Our framework leaves the tangential motion of the vertices free.  Possible discrete tangential motions are the original BGN
tangential motion, or the natural analogue when the substrate normals are taken from \(\phi\), which we call
the level-set BGN motion, or the dual formulation \cite{GaoLiTang2026dual} of the
minimal-deformation-rate (MDR) tangential motion
\cite{HuLi2022MDR,GaoGarckeLiTang2025bgnmdr}; see also \cite{ZhangAndrewsFarrell2026,LiuZhao2026}.
In each case the obtained schemes are unconditionally energy stable and consistent approximations
of the considered flow.

Our main contributions are as follows.
\begin{enumerate}[label={\rm(\arabic*)},leftmargin=2.4em,itemsep=2pt,topsep=3pt]
  \item Parametric schemes that discretize the substrate together with the
  film surface: single-patch schemes for
  \(\gamma_{\rm in}\ge\gamma_{\rm out}\) and for
  \(\gamma_{\rm in}\le\gamma_{\rm out}\), and a full-substrate scheme for all
  densities.  They are linear, uniquely solvable, and unconditionally energy
  stable (Section~\ref{sec:schemes}).
  \item Mildly nonlinear extensions of the above schemes that keep the
  substrate vertices attached to the true substrate. As a solution method
  we propose a Picard iteration that is energy stable at every iterate
  (Section~\ref{sec:iterative}).
  \item An extension of the full-substrate scheme to anisotropic film and
  substrate energies, which is unconditionally energy stable
  (Section~\ref{sec:anisotropic}).
  \item A dual-MDR tangential motion that maintains the energy law and the
  consistency with the underlying flow.
  (Section~\ref{sec:dual-mdr}).
\end{enumerate}

Section~\ref{sec:continuous} presents the model and its weak formulations.
Section~\ref{sec:numerics} reports experiments on curves and surfaces,
including spherical, egg-carton, and sine-shaped substrates and an
anisotropic substrate energy.
Section~\ref{sec:conclusion} concludes with some final remarks.

\section{The continuous model and equivalent energy formulations}
\label{sec:continuous}

\subsection{Governing equations}
\label{subsec:problem}

Let $\Gamma(t)\subset\mathbb{R}^3$ be the free surface of a thin
film evolving on a curved substrate $\Gamma_{\rm base}$, with closed contact
lines forming $\partial\Gamma(t)=\Gamma(t)\cap\Gamma_{\rm base}$.
The constant positive surface-energy densities $\gamma_0$,
$\gamma_{\rm in}$, and $\gamma_{\rm out}$ correspond to the film/vapour,
film/substrate, and vapour/substrate interfaces, respectively.
During solid-state dewetting the morphology of $\Gamma(t)$ evolves by surface diffusion,
\begin{align}\label{eq:sd}
    v\cdot n=\beta\,\Delta_\Gamma(\gamma_0H) \qquad\text{on }\Gamma.
\end{align}
Here $v$ is the surface velocity, $v\cdot n$ is its normal
component, $n$ is the unit normal pointing out of the film, and $H$ is the mean
curvature; $\nabla_\Gamma$, $\nabla_\Gamma\cdot$, and $\Delta_\Gamma=\nabla_\Gamma\cdot\nabla_\Gamma$ are the surface gradient, divergence, and Laplace--Beltrami operator on $\Gamma$. The surface mobility constant $\beta>0$ is related to the surface diffusion constant \cite{Mullins1957}. Let $n_b$ be the unit normal of $\Gamma_{\rm base}$ pointing into
the substrate. The evolution is supplemented by three conditions along the
contact lines $\partial\Gamma$:
\begin{subequations}\label{eq:bc}
\begin{align}
    v\cdot n_b &= 0, \label{eq:bc-attach}\\
    \mu_{\scriptscriptstyle\partial}\cdot\mu_b &= \frac{\gamma_{\rm out}-\gamma_{\rm in}}{\gamma_0}, \label{eq:bc-young}\\
    \bigl(\mu_{\scriptscriptstyle\partial}\cdot\nabla_{\Gamma}H\bigr)\big|_{\partial\Gamma} &= 0. \label{eq:bc-flux}
\end{align}
\end{subequations}
The outward unit conormal
$\mu_{\scriptscriptstyle\partial}$ of $\Gamma$ along $\partial\Gamma$ is
tangent to $\Gamma$, normal to $\partial\Gamma$, and points away from the film,
while $\mu_b$ is the unit conormal of $\Gamma_{\rm in}$ along $\partial\Gamma$
within $\Gamma_{\rm base}$, pointing out of $\Gamma_{\rm in}$ into
$\Gamma_{\rm out}$. Condition \eqref{eq:bc-attach} keeps the contact lines on
the substrate, \eqref{eq:bc-young} is Young's law, and \eqref{eq:bc-flux} states
that no mass flows across them.

\subsection{The total free energy and Young's relation}
\label{subsec:energy}

We assume that $\Gamma(t)$ and $\Gamma_{\rm base}$ are
connected, that each component of $\partial\Gamma(t)$ encloses a patch of
$\Gamma_{\rm base}$, that these patches have pairwise disjoint closures, and
that the contact lines stay in the interior of the substrate,
\begin{align}\label{eq:cl-interior}
\partial\Gamma(t)\cap\partial\Gamma_{\rm base}=\emptyset .
\end{align}
Let
$\Gamma_{\rm in}(t)\subset\Gamma_{\rm base}$ be the union of the enclosed
patches (on a closed substrate, the patches covered by the film) and
$\Gamma_{\rm out}(t):=\Gamma_{\rm base}\setminus\Gamma_{\rm in}(t)\neq\emptyset$.  Then
\begin{align}\label{eq:area-const}
|\Gamma_{\rm in}(t)|+|\Gamma_{\rm out}(t)|=|\Gamma_{\rm base}|=\text{constant}.
\end{align}
The total free energy is
\begin{align}\label{eq:energy-W}
W(t):=\gamma_0|\Gamma(t)|+\gamma_{\rm in}|\Gamma_{\rm in}(t)|+\gamma_{\rm out}|\Gamma_{\rm out}(t)|.
\end{align}
The equilibrium contact angle $\theta\in(0,\pi)$ is related
to the densities by
\begin{align}\label{eq:young-energy}
\frac{\gamma_{\rm out}-\gamma_{\rm in}}{\gamma_0}=\cos\theta=\mu_{\scriptscriptstyle\partial}\cdot\mu_b,
\end{align}
so \eqref{eq:bc-young} states that the contact angle equals \(\theta\).

\subsection{Three equivalent energy reformulations}
\label{subsec:three-energies}

For the single-patch schemes, we eliminate one substrate
contribution from $W$ and discretize the film surface together with the
remaining substrate patch.  The glued surface
$\Gamma\cup\Gamma_{\rm in}$ is closed, whereas
$\Gamma\cup\Gamma_{\rm out}$ has only the fixed outer rim
$\partial\Gamma_{\rm base}$ as its boundary, which is empty if
$\Gamma_{\rm base}$ is closed.  Using \eqref{eq:area-const} in
\eqref{eq:energy-W} gives two equivalent forms,
\begin{align}
W(t) &= \underbrace{\gamma_0|\Gamma(t)|+(\gamma_{\rm in}-\gamma_{\rm out})|\Gamma_{\rm in}(t)|}_{W_1(t)}+\gamma_{\rm out}|\Gamma_{\rm base}|, \label{eq:W1-equiv}\\
W(t) &= \underbrace{\gamma_0|\Gamma(t)|+(\gamma_{\rm out}-\gamma_{\rm in})|\Gamma_{\rm out}(t)|}_{W_2(t)}+\gamma_{\rm in}|\Gamma_{\rm base}|. \label{eq:W2-equiv}
\end{align}
Surface diffusion \eqref{eq:sd} with the boundary conditions \eqref{eq:bc}
gives
\begin{align}\label{eq:diss}
\frac{\diff}{\diff t}W(t)=-\beta\,\|\nabla_{\Gamma(t)}(\gamma_0H)\|_{L^2(\Gamma(t))}^2\le 0.
\end{align}
Since $W$, $W_1$, and $W_2$ differ only by additive constants, \(W_1\) and
\(W_2\) decrease at the same rate.
For energy-stable single-patch discretizations, we use
$W_1$ when $\gamma_{\rm in}\ge\gamma_{\rm out}$ and $W_2$ when
$\gamma_{\rm in}\le\gamma_{\rm out}$, so the coefficients
of the reduced energies are nonnegative.  The full-substrate scheme instead
works directly with $W$.
\subsection{Variational identities on the free surface and substrate patches}
\label{subsec:variational}

Sobolev spaces on unions of patches, such as
\(H^1(\Gamma\cup\Gamma_{\rm in})^3\), are understood patchwise, with a single
trace on each shared interface, and
\(H^1_{0,\partial\Gamma_{\rm base}}\) denotes functions that vanish on the
fixed outer rim \(\partial\Gamma_{\rm base}\).  Since
\(\mu_{\scriptscriptstyle\partial}\) is a unit vector in the plane spanned by
\(\mu_b\) and \(n_b\), Young's law \eqref{eq:bc-young} gives
\(\mu_{\scriptscriptstyle\partial}=\cos\theta\,\mu_b+\sin\theta\,n_b\).
Integrating \(-Hn=\Delta_\Gamma\id\) by parts on \(\Gamma\), where \(\id\) is
the identity map, and the same identity on the two substrate patches, whose
outward conormals along \(\partial\Gamma\) are \(\mu_b\) and \(-\mu_b\), gives
\begin{align}\label{foundation}
\int_{\Gamma} Hn\cdot w
=
\int_{\Gamma}\nabla_{\Gamma}{\rm id}\cdot\nabla_{\Gamma}w
-\cos\theta\int_{\partial\Gamma}\mu_b\cdot w
-\sin\theta\int_{\partial\Gamma}n_b\cdot w ,
\end{align}
\begin{subequations}\label{foundation_substrate}
\begin{align}
\int_{\Gamma_{\rm in}} H_{\rm in} n_b\cdot w
&=
\int_{\Gamma_{\rm in}}
\nabla_{\Gamma_{\rm in}}{\rm id}\cdot\nabla_{\Gamma_{\rm in}}w
-\int_{\partial\Gamma}\mu_b\cdot w,
\label{foundation_in}
\\
\int_{\Gamma_{\rm out}} H_{\rm out} n_b\cdot w
&=
\int_{\Gamma_{\rm out}}
\nabla_{\Gamma_{\rm out}}{\rm id}\cdot\nabla_{\Gamma_{\rm out}}w
+\int_{\partial\Gamma}\mu_b\cdot w,
\label{foundation_out}
\end{align}
\end{subequations}
with \(H_{\rm in}\), \(H_{\rm out}\) the mean curvatures of the patches and
\(w=0\) on \(\partial\Gamma_{\rm base}\) in \eqref{foundation_out}.

Adding \eqref{foundation} times \(\gamma_0\) and \eqref{foundation_in} times
\(\gamma_{\rm in}-\gamma_{\rm out}=-\gamma_0\cos\theta\) cancels the
\(\mu_b\)-terms.  The remaining substrate-curvature and contact-line terms are
parallel to \(n_b\), so they vanish on
\(\mathcal V_{\Gamma\cup\Gamma_{\rm in}}:=\{w\in H^1(\Gamma\cup\Gamma_{\rm in})^3:
w\cdot n_b=0\text{ a.e.\ on }\Gamma_{\rm in}\}\), whose constraint also holds
for the trace on \(\partial\Gamma\).  Hence, for all
\(w\in\mathcal V_{\Gamma\cup\Gamma_{\rm in}}\),
\begin{align}\label{eq:var-W1}
\gamma_0\int_{\Gamma} Hn\cdot w
=\gamma_0\int_{\Gamma}\nabla_{\Gamma}{\rm id}\cdot\nabla_{\Gamma}w
+(\gamma_{\rm in}-\gamma_{\rm out})\int_{\Gamma_{\rm in}}
\nabla_{\Gamma_{\rm in}}{\rm id}\cdot\nabla_{\Gamma_{\rm in}}w .
\end{align}
In the same way, \eqref{foundation} and \eqref{foundation_out} with weights
\(\gamma_0\) and \(\gamma_{\rm out}-\gamma_{\rm in}\) give, for all \(w\) in
\(\mathcal V_{\Gamma\cup\Gamma_{\rm out}}:=\{w\in
H^1_{0,\partial\Gamma_{\rm base}}(\Gamma\cup\Gamma_{\rm out})^3:
w\cdot n_b=0\text{ a.e.\ on }\Gamma_{\rm out}\}\),
\begin{align}\label{eq:var-W2}
\gamma_0\int_{\Gamma} Hn\cdot w
=\gamma_0\int_{\Gamma}\nabla_{\Gamma}{\rm id}\cdot\nabla_{\Gamma}w
+(\gamma_{\rm out}-\gamma_{\rm in})\int_{\Gamma_{\rm out}}
\nabla_{\Gamma_{\rm out}}{\rm id}\cdot\nabla_{\Gamma_{\rm out}}w ,
\end{align}
and all three identities with weights \(\gamma_0\), \(\gamma_{\rm in}\),
\(\gamma_{\rm out}\) give, on the full-substrate space
\begin{align}\label{eq:space-W}
  \mathcal V_{\Gamma\cup\Gamma_{\rm base}}
  :=
  \left\{
  w\in H^1_{0,\partial\Gamma_{\rm base}}
  (\Gamma\cup\Gamma_{\rm base})^3:\;
  w\cdot n_b=0 \ \text{a.e.\ on } \Gamma_{\rm base}
  \right\},
\end{align}
\begin{multline}\label{eq:var-W}
\gamma_0\int_{\Gamma} Hn\cdot w
=\gamma_0\int_{\Gamma}\nabla_{\Gamma}{\rm id}\cdot\nabla_{\Gamma}w
+\gamma_{\rm in}\int_{\Gamma_{\rm in}}
\nabla_{\Gamma_{\rm in}}{\rm id}\cdot\nabla_{\Gamma_{\rm in}}w\\
+\gamma_{\rm out}\int_{\Gamma_{\rm out}}
\nabla_{\Gamma_{\rm out}}{\rm id}\cdot\nabla_{\Gamma_{\rm out}}w
\qquad
\forall w\in\mathcal V_{\Gamma\cup\Gamma_{\rm base}} .
\end{multline}

Conversely, for smooth surfaces each identity implies \(-Hn=\Delta_\Gamma\id\) and Young's law.
For example, by \eqref{foundation_substrate} and \(w\cdot n_b=0\) on $\Gamma_{\rm base}$, the two substrate terms on the right hand side of \eqref{eq:var-W} reduce to \((\gamma_{\rm in}-\gamma_{\rm out})\int_{\partial\Gamma}\mu_b\cdot w\). Then integrating the free-surface term by parts gives
\begin{align}\label{eq:geo-consistency-W}
\gamma_0\int_\Gamma (Hn+\Delta_\Gamma\id)\cdot w
=\int_{\partial\Gamma}\bigl[\gamma_0\mu_{\scriptscriptstyle\partial}
+(\gamma_{\rm in}-\gamma_{\rm out})\mu_b\bigr]\cdot w
\qquad \forall w\in\mathcal V_{\Gamma\cup\Gamma_{\rm base}} .
\end{align}
Test functions supported in the interior of \(\Gamma\) give
\(-Hn=\Delta_\Gamma\id\), while choosing \(w=\zeta\mu_b\) on \(\partial\Gamma\), for
\(\zeta\) arbitrary, gives Young's law \eqref{eq:bc-young}.

\section{The substrate-coupled BGN framework}
\label{sec:schemes}

The identities \eqref{eq:var-W1}, \eqref{eq:var-W2}, and \eqref{eq:var-W}
lead to three linear energy-stable schemes of BGN type.  We call the framework
substrate-coupled because the substrate mesh is part of the scheme.  The
single-patch schemes couple the free surface $\Gamma_h$ to one substrate patch,
$\Gamma_{{\rm in},h}$ if $\gamma_{\rm in}\ge\gamma_{\rm out}$ and
$\Gamma_{{\rm out},h}$ otherwise.  The full-substrate scheme couples it to
both.  Each time step solves one linear system for a velocity on the coupled
meshes and a mean curvature on $\Gamma_h$.

\subsection{Finite element preliminaries}
\label{subsec:fem}

Let $\Gamma^0\subset\mathbb{R}^3$ be a smooth initial surface, approximated by a triangulated surface $\Gamma_h^0$ with element set $\mathcal K_h(\Gamma_h^0)$. The interval $[0,T]$ is partitioned uniformly as $t_m=m\tau$, $m=0,1,\dots,N$, with step $\tau>0$. At time level $t_m$ the surface is $\Gamma_h^m=\bigcup_{K\in\mathcal K_h(\Gamma_h^m)}K$.
Alongside $\Gamma_h^m$ we discretize the substrate patches by triangulated surfaces $\Gamma_{{\rm in},h}^m$ and $\Gamma_{{\rm out},h}^m$. For compact notation we set
\[
\Gamma_{{\rm base},h}^m:=\Gamma_{{\rm in},h}^m\cup\Gamma_{{\rm out},h}^m,\qquad
{\Gamma_{\cup\star,h}^m:=\Gamma_h^m\cup\Gamma_{\star,h}^m\quad\text{for }\star\in\{{\rm in},{\rm out},{\rm base}\}.}
\]

For a triangulated surface $\Sigma_h$ among the above, let
\[
S_h(\Sigma_h):=\bigl\{v_h\in C^0(\Sigma_h):\ v_h|_K\in\mathbb P_1(K)\ \text{for all }K\in\mathcal K_h(\Sigma_h)\bigr\},
\]
with vector-valued counterpart $(S_h(\Sigma_h))^3$. The vertices of each $K=\triangle(q_{K,0},q_{K,1},q_{K,2})$ are ordered counterclockwise as seen from the exterior of the enclosed region. We use the mass-lumped inner product
\begin{align}\label{mass-lump-defn}
   \int_{\Sigma_h}^hu\cdot w:=\sum_{K\in\mathcal K_h(\Sigma_h)}\frac{|K|}{3}\sum_{j=0}^2(u\cdot w)|_K(q_{K,j}),
\end{align}
used with the same formula for scalar-valued factors,
and the mass-lumped Lagrange interpolant $I_{h,\Sigma_h}f\in S_h(\Sigma_h)$ of a possibly elementwise discontinuous $f$, defined nodally by
\begin{align}\label{eq:lumped-int}
(I_{h,\Sigma_h}f)(q)=\frac{\sum_{K\ni q}f|_K(q)\,|K|}{\sum_{K\ni q}|K|}\qquad\text{for every vertex }q\in\Sigma_h.
\end{align}

Let $n_h^m$ be the piecewise constant unit normal of the free surface
$\Gamma_h^m$ and let
\[\hat n_h^m:=I_{h,\Gamma_h^m}n_h^m\] be its mass-lumped, area-weighted nodal
normal.

The attachment constraint below needs a discrete substrate normal $\bar n_{\star,h}^m$ at each substrate vertex, for $\star\in\{{\rm in},{\rm out},{\rm base}\}$.
When $\Gamma_{\rm base}$ is the zero
level set of a smooth $\phi:\mathbb R^3\to\mathbb R$ with $|\nabla\phi|>0$ in a
{neighbourhood} $\mathcal U$ of $\Gamma_{\rm base}$, we prescribe
\begin{align}\label{eq:level-set-normal}
\bar n_{{\rm base},h}^m(q):=\frac{\nabla\phi(q)}{|\nabla\phi(q)|}\qquad\text{for every vertex }q\in\Gamma_{{\rm base},h}^m,
\end{align}
and similarly for $\bar n_{{\rm in},h}^m$ and
$\bar n_{{\rm out},h}^m$. The vertices of $\Gamma_{{\rm base},h}^m$ need not lie on
$\Gamma_{\rm base}$.  Formula \eqref{eq:level-set-normal} only needs them to
stay in $\mathcal U$, which we assume throughout.  Each value is the unit
normal to the level set of $\phi$ through its vertex,
which agrees with the substrate normal
when the vertex lies on $\Gamma_{\rm base}$.

Let $\mathcal N(\Sigma_h)$ denote the vertex set of a triangulated surface $\Sigma_h$. For $\star\in\{{\rm in},{\rm out},{\rm base}\}$ set
\[
\mathcal Q_{\star,h}^m:=\mathcal N(\Gamma_{\star,h}^m).
\]
Each contact-line vertex occurs only once in $\mathcal Q_{{\rm base},h}^m$, although it is incident to both substrate patches. The admissible velocity space is
\begin{align}\label{eq:subspace}
\mathcal V_{\cup\star,h}^m:=\bigl\{\chi_h\in(S_h(\Gamma_{\cup\star,h}^m))^3:\ \chi_h(q)\cdot\bar n_{\star,h}^m(q)=0\ \text{for all }q\in\mathcal Q_{\star,h}^m\bigr\}.
\end{align}
This constraint approximates the attachment condition \eqref{eq:bc-attach} at
the substrate vertices and leaves the tangential motion free.
We also pin the velocity
on the fixed outer boundary $\partial\Gamma_{{\rm base},h}^m$, whose vertices
we call pinned: for $\star\in\{{\rm in},{\rm out},{\rm base}\}$,
\begin{align}\label{eq:outer-subspace}
\mathring{\mathcal V}_{\cup\star,h}^m:=\bigl\{\chi_h\in\mathcal V_{\cup\star,h}^m:\ \chi_h(q)=0\ \text{for all vertices }q\in\partial\Gamma_{{\rm base},h}^m\bigr\}.
\end{align}
For $\star={\rm in}$, the inner patch contains no vertex of
$\partial\Gamma_{{\rm base},h}^m$, recall \eqref{eq:cl-interior}, so the added
condition is empty and
$\mathring{\mathcal V}_{\cup{\rm in},h}^m=\mathcal V_{\cup{\rm in},h}^m$.

\begin{remark}
\label{rem:normal-consistency}
One could also use a substrate normal computed from the substrate mesh, such as
the mass-lumped vertex normal \eqref{eq:lumped-int} of
$\Gamma_{{\rm base},h}^m$. This would give rise to BGN tangential motion for the
substrate vertices. But in situations where $\Gamma_{\rm base}$ is given by a level set
function, we prefer the choice \eqref{eq:level-set-normal}.
\end{remark}

\subsection{Single-patch schemes}
\label{subsec:single-patch}

The single-patch schemes discretize \eqref{eq:var-W1} or
\eqref{eq:var-W2}.  Set
\begin{align}\label{eq:sigma}
(\star,\sigma):=
\begin{cases}
({\rm in},\,\gamma_{\rm in}-\gamma_{\rm out}), & \gamma_{\rm in}\ge\gamma_{\rm out},\\[2pt]
({\rm out},\,\gamma_{\rm out}-\gamma_{\rm in}), & \gamma_{\rm in}\le\gamma_{\rm out},
\end{cases}
\end{align}
so that $\sigma\ge0$; for $\gamma_{\rm in}=\gamma_{\rm out}$
either choice may be used.  Given $\Gamma_h^m$ and
$\Gamma_{\star,h}^m$, find
$(v_h^{m+1},H_h^{m+1})\in\mathring{\mathcal V}_{\cup\star,h}^m\times S_h(\Gamma_h^m)$
such that, with $X_h^{m+1}:=\id+\tau v_h^{m+1}$ and for all
$(\varphi_h,w_h)\in S_h(\Gamma_h^m)\times\mathring{\mathcal V}_{\cup\star,h}^m$,
\begin{subequations}\label{eq:scheme-single}
\begin{align}
&\int_{\Gamma_h^m}^h(v_h^{m+1}\cdot n_h^m)\,\varphi_h
+\beta\int_{\Gamma_h^m}\nabla_{\Gamma_h^m}(\gamma_0H_h^{m+1})\cdot\nabla_{\Gamma_h^m}\varphi_h
=0, \label{eq:scheme-single-flow}\\
&\gamma_0\int_{\Gamma_h^m}^hH_h^{m+1}\,n_h^m\cdot w_h
=\gamma_0\int_{\Gamma_h^m}\nabla_{\Gamma_h^m}X_h^{m+1}\cdot\nabla_{\Gamma_h^m}w_h
\notag\\
&\quad+\sigma\int_{\Gamma_{\star,h}^m}
\nabla_{\Gamma_{\star,h}^m}X_h^{m+1}\cdot
\nabla_{\Gamma_{\star,h}^m}w_h .
\label{eq:scheme-single-curv}
\end{align}
\end{subequations}
The new surfaces are $\Gamma_h^{m+1}:=X_h^{m+1}(\Gamma_h^m)$ and
$\Gamma_{\star,h}^{m+1}:=X_h^{m+1}(\Gamma_{\star,h}^m)$.
Equation \eqref{eq:scheme-single-curv}, which is the {discrete version of \eqref{eq:var-W1} or \eqref{eq:var-W2}},
induces a tangential motion.
When the attachment condition is enforced through the level set normal as in
\eqref{eq:subspace}, we call this the level-set BGN tangential motion.  The scheme dissipates
\begin{align}\label{eq:single-energy}
W_{\star,h}^m=\gamma_0|\Gamma_h^m|+\sigma\,|\Gamma_{\star,h}^m| ,
\end{align}
the discrete counterpart of $W_1$ in \eqref{eq:W1-equiv} for
$\star={\rm in}$, and of $W_2$ in \eqref{eq:W2-equiv} for $\star={\rm out}$.

\begin{remark}
\label{rem:equal-densities}
If $\gamma_{\rm in}=\gamma_{\rm out}$, the Young angle is $90^\circ$ and
$\sigma=0$, so the substrate integral in \eqref{eq:scheme-single-curv}
vanishes.  The substrate patch then need not be meshed: \eqref{eq:scheme-single}
becomes a scheme on $\Gamma_h^m$ alone, with velocities in
\[
\bigl\{\chi_h\in(S_h(\Gamma_h^m))^3:\ \chi_h(q)\cdot\bar n_{{\rm base},h}^m(q)=0
\ \text{for all }q\in\partial\Gamma_h^m\bigr\},
\]
so the attachment constraint acts only at the contact-line vertices,
compare also with \cite[(4.13)]{BGN2010clusters}.
\end{remark}

\subsection{Full-substrate scheme}
\label{subsec:full-substrate}

The full-substrate scheme discretizes \eqref{eq:var-W}.  Both substrate
patches are meshed, so we do not need to know which density is larger.  Given
\(\Gamma_h^m\), \(\Gamma_{{\rm in},h}^m\), and \(\Gamma_{{\rm out},h}^m\), find
\((v_h^{m+1},H_h^{m+1})\in
\mathring{\mathcal V}_{\cup{\rm base},h}^m\times S_h(\Gamma_h^m)\) such that,
with \(X_h^{m+1}:=\id+\tau v_h^{m+1}\) and for all
\((\varphi_h,w_h)\in S_h(\Gamma_h^m)\times
\mathring{\mathcal V}_{\cup{\rm base},h}^m\),
\begin{subequations}\label{eq:scheme-unified}
\begin{align}
&\int_{\Gamma_h^m}^h(v_h^{m+1}\cdot n_h^m)\,\varphi_h
+\beta\int_{\Gamma_h^m}\nabla_{\Gamma_h^m}(\gamma_0H_h^{m+1})\cdot\nabla_{\Gamma_h^m}\varphi_h
=0, \label{eq:scheme-unified-flow}\\
&\gamma_0\int_{\Gamma_h^m}^hH_h^{m+1}\,n_h^m\cdot w_h
=\gamma_0\int_{\Gamma_h^m}\nabla_{\Gamma_h^m}X_h^{m+1}\cdot
\nabla_{\Gamma_h^m}w_h
\notag\\
&\quad+\gamma_{\rm in}\int_{\Gamma_{{\rm in},h}^m}
\nabla_{\Gamma_{{\rm in},h}^m}X_h^{m+1}\cdot
\nabla_{\Gamma_{{\rm in},h}^m}w_h+\gamma_{\rm out}\int_{\Gamma_{{\rm out},h}^m}
\nabla_{\Gamma_{{\rm out},h}^m}X_h^{m+1}\cdot
\nabla_{\Gamma_{{\rm out},h}^m}w_h .
\label{eq:scheme-unified-curv}
\end{align}
\end{subequations}
The new surfaces are $\Gamma_h^{m+1}:=X_h^{m+1}(\Gamma_h^m)$,
$\Gamma_{{\rm in},h}^{m+1}:=X_h^{m+1}(\Gamma_{{\rm in},h}^m)$ and
$\Gamma_{{\rm out},h}^{m+1}:=X_h^{m+1}(\Gamma_{{\rm out},h}^m)$.
Similarly to \eqref{eq:scheme-single-curv}, also \eqref{eq:scheme-unified-curv}
induces a tangential motion on the discrete free surface and on the substrate
approximations.
Moreover, the scheme dissipates
\begin{align}\label{eq:full-energy}
W_h^m=\gamma_0|\Gamma_h^m|+\gamma_{\rm in}|\Gamma_{{\rm in},h}^m|+\gamma_{\rm out}|\Gamma_{{\rm out},h}^m|.
\end{align}

\subsection{Well-posedness}
\label{subsec:wellposed}

We first show that the single-patch scheme \eqref{eq:scheme-single} has a
unique solution.  The same argument covers the full-substrate scheme
\eqref{eq:scheme-unified}.

\begin{theorem}[Well-posedness of the single-patch schemes]
\label{thm:wellposed-neg}
Let $(\star,\sigma)$ be as in \eqref{eq:sigma} with $\sigma>0$, and let
$\Gamma_h^m$ and $\Gamma_{\star,h}^m$ satisfy:
\begin{enumerate}[label={\rm(A\arabic*)}]
    \item the triangulations of $\Gamma_h^m$ and $\Gamma_{\star,h}^m$ are regular with no degenerate elements, the contact line $\partial\Gamma_h^m=\Gamma_h^m\cap\Gamma_{\star,h}^m$ is contained in $\partial\Gamma_{\star,h}^m$ and is disjoint from the fixed rim $\partial\Gamma_{{\rm base},h}^m$, and the two triangulations induce the same partition of the contact line; moreover, $\Gamma_h^m$ and the glued union $\Gamma_h^m\cup\Gamma_{\star,h}^m$ are connected;
    \item for every vertex $q\in\Gamma_h^m$ the area-weighted nodal normal $\hat n_h^m(q)$ is well defined and nonzero, and these normals span $\mathbb R^3$;
    \item for every contact-line vertex $q\in\partial\Gamma_h^m$ the vectors $\hat n_h^m(q)$ and $\bar n_{\star,h}^m(q)$ are linearly independent.
\end{enumerate}
Then the scheme \eqref{eq:scheme-single} has a unique solution
$(v_h^{m+1},H_h^{m+1})\in\mathring{\mathcal V}_{\cup\star,h}^m\times S_h(\Gamma_h^m)$.
\end{theorem}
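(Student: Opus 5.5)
Since \eqref{eq:scheme-single} is a square linear system in the unknowns $(v_h^{m+1},H_h^{m+1})\in\mathring{\mathcal V}_{\cup\star,h}^m\times S_h(\Gamma_h^m)$, with test space equal to trial space, existence follows from uniqueness. It therefore suffices to show that the homogeneous system has only the trivial solution. Writing $X_h^{m+1}=\id+\tau v_h^{m+1}$, the homogeneous problem is obtained by dropping the data terms involving $\id$. We then seek $(v,H)$ such that
\[
\int^h_{\Gamma_h^m}(v\cdot n_h^m)\varphi_h+\beta\gamma_0\int_{\Gamma_h^m}\nabla_{\Gamma_h^m}H\cdot\nabla_{\Gamma_h^m}\varphi_h=0,
\]
\[
\gamma_0\int^h_{\Gamma_h^m}H\,n_h^m\cdot w_h=\tau\gamma_0\int_{\Gamma_h^m}\nabla_{\Gamma_h^m}v\cdot\nabla_{\Gamma_h^m}w_h+\tau\sigma\int_{\Gamma_{\star,h}^m}\nabla_{\Gamma_{\star,h}^m}v\cdot\nabla_{\Gamma_{\star,h}^m}w_h
\]
for all test functions.

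We take $\varphi_h=\gamma_0H$ in the first equation and $w_h=v$ in the second, and subtract. This gives
\[
\beta\gamma_0^2\|\nabla_{\Gamma_h^m}H\|^2_{L^2(\Gamma_h^m)}+\tau\gamma_0\|\nabla_{\Gamma_h^m}v\|^2_{L^2(\Gamma_h^m)}+\tau\sigma\|\nabla_{\Gamma_{\star,h}^m}v\|^2_{L^2(\Gamma_{\star,h}^m)}=0 .
\]
Since $\sigma>0$, all three terms vanish. By the connectedness of $\Gamma_h^m$ in (A1), $H\equiv H_c$ is constant. By the connectedness of the glued union, and because $v$ is a single continuous piecewise linear function across the shared contact-line partition in (A1), $v\equiv v_c$ is a constant vector on $\Gamma_h^m\cup\Gamma_{\star,h}^m$.

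Next we use these constants in the equations. In the first equation we take $\varphi_h=1$. With mass lumping this gives $\int^h_{\Gamma_h^m} v_c\cdot n_h^m = v_c\cdot\sum_q \hat n_h^m(q)\,m_q=0$, where $m_q>0$ are the lumped vertex weights. In the second equation the right-hand side vanishes, so $H_c\int^h_{\Gamma_h^m}n_h^m\cdot w_h=0$ for all admissible $w_h$. We choose $w_h$ supported at a single interior vertex $q$ of $\Gamma_h^m$ (not on the contact line) with $w_h(q)=\hat n_h^m(q)$. This yields $H_c\,m_q|\hat n_h^m(q)|^2=0$, so $H_c=0$ by (A2).

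If $\Gamma_h^m$ has no interior vertex, we instead use a contact-line vertex $q$ and choose $w_h(q)$ tangent to the substrate, i.e.\ orthogonal to $\bar n_{\star,h}^m(q)$, but not orthogonal to $\hat n_h^m(q)$. Such a vector exists by (A3), and on $\Gamma_{\star,h}^m$ it is simply the same nodal value. This uses admissibility, since the constraint only involves $\bar n_{\star,h}^m$; moreover $q$ is not pinned by (A1). This again gives $H_c=0$.

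It remains to show $v_c=0$, and I expect this to be the main obstacle, since the first equation alone only gives one scalar condition. Here we use the constraints from the admissible space, not the equations. The constant $v_c$ must satisfy $v_c\cdot\bar n_{\star,h}^m(q)=0$ at every substrate vertex. If $\star={\rm out}$, then $v_c=0$ at pinned rim vertices directly.

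If $\star={\rm in}$, I would use the first equation with more general test functions: now $H=0$, so $\int^h(v_c\cdot n_h^m)\varphi_h=0$ for all $\varphi_h$. This gives $v_c\cdot\hat n_h^m(q)=0$ at every vertex $q$ of $\Gamma_h^m$. Since these normals span $\mathbb R^3$ by (A2), $v_c=0$. This argument works for both cases, so pinning is not even needed.

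Hence $(v,H)=(0,0)$, the linear map is injective, and therefore it is bijective. The only delicate points are the handling of continuity across the contact line and the choice of test function in the second equation, which must be admissible; (A3) is what ensures such a test function exists at contact-line vertices.
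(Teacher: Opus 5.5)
Your argument is correct and is essentially the paper's proof: the energy test makes $H_h^{m+1}$ and $v_h^{m+1}$ constant, nodal tests of the flow equation with (A2) force the constant velocity to vanish, and nodal tests of the curvature equation (with a substrate-tangent direction at contact-line vertices supplied by (A3)) force $H_h^{m+1}=0$. The differences are cosmetic. You show $H_c=0$ before $v_c=0$, which is legitimate because $\nabla v_h^{m+1}=0$ already makes the stiffness terms vanish, and you use constancy of $H_h^{m+1}$ to test at a single vertex; your detours through $\varphi_h=1$ and through rim pinning for $\star={\rm out}$ (which would fail for a closed substrate) are unnecessary.
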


\begin{proof}
The trial and test spaces have the same finite dimension, so it suffices to
show that the homogeneous system has only the trivial solution.  Let
$(v_h^{m+1},H_h^{m+1})$ satisfy, for all
\((\varphi_h,w_h)\in S_h(\Gamma_h^m)\times\mathring{\mathcal V}_{\cup\star,h}^m\),
\begin{subequations}\label{eq:homo-neg}
\begin{align}
\int_{\Gamma_h^m}^h(v_h^{m+1}\cdot n_h^m)\,\varphi_h
&+\beta\int_{\Gamma_h^m}\nabla_{\Gamma_h^m}(\gamma_0H_h^{m+1})\cdot\nabla_{\Gamma_h^m}\varphi_h=0, \label{eq:homo-neg-flow}\\
\gamma_0\int_{\Gamma_h^m}^hH_h^{m+1}\,n_h^m\cdot w_h
&=\tau\gamma_0\int_{\Gamma_h^m}\nabla_{\Gamma_h^m}v_h^{m+1}\cdot\nabla_{\Gamma_h^m}w_h \notag\\
&\quad+\tau\sigma\int_{\Gamma_{\star,h}^m}\nabla_{\Gamma_{\star,h}^m}v_h^{m+1}\cdot\nabla_{\Gamma_{\star,h}^m}w_h. \label{eq:homo-neg-curv}
\end{align}
\end{subequations}
Taking $w_h=v_h^{m+1}$ in \eqref{eq:homo-neg-curv} and
$\varphi_h=-\gamma_0H_h^{m+1}$ in \eqref{eq:homo-neg-flow} gives
\begin{align*}
&\beta\int_{\Gamma_h^m}|\nabla_{\Gamma_h^m}(\gamma_0H_h^{m+1})|^2
+\tau\gamma_0\int_{\Gamma_h^m}|\nabla_{\Gamma_h^m}v_h^{m+1}|^2+\tau\sigma\int_{\Gamma_{\star,h}^m}|\nabla_{\Gamma_{\star,h}^m}v_h^{m+1}|^2=0.
\end{align*}
Since $\sigma>0$, all terms are nonnegative.  Hence $H_h^{m+1}$ is constant
on $\Gamma_h^m$, and $v_h^{m+1}$ is constant on the glued union
$\Gamma_h^m\cup\Gamma_{\star,h}^m$, which is connected by {\rm(A1)}.  Since
$\nabla_{\Gamma_h^m}H_h^{m+1}=0$, the flow equation \eqref{eq:homo-neg-flow}
reduces to \(\int_{\Gamma_h^m}^h(v_h^{m+1}\cdot n_h^m)\varphi_h=0\) for
all \(\varphi_h\).  Testing with nodal basis functions and using
\eqref{eq:lumped-int} gives
\[
v_h^{m+1}(q)\cdot\hat n_h^m(q)=0\qquad\text{at every vertex }q\in\Gamma_h^m.
\]
Hence the constant value of $v_h^{m+1}$ is orthogonal to all
$\hat n_h^m(q)$, so it vanishes by {\rm(A2)}, and \(v_h^{m+1}\equiv0\).  The
right-hand side of \eqref{eq:homo-neg-curv} vanishes, so
\begin{align}\label{eq:reduced-neg}
\int_{\Gamma_h^m}^hH_h^{m+1}\,n_h^m\cdot w_h=0\qquad\forall\,w_h\in\mathring{\mathcal V}_{\cup\star,h}^m.
\end{align}
Let \(q\) be a vertex of \(\Gamma_h^m\) and let \(\varphi_q\) be its nodal
basis function on \(\Gamma_{\cup\star,h}^m\).  If \(q\notin\partial\Gamma_h^m\),
then \(w_h=\varphi_q\hat n_h^m(q)\) is admissible, and \eqref{eq:reduced-neg}
gives
\[
\omega_q^\Gamma\,H_h^{m+1}(q)\,|\hat n_h^m(q)|^2=0,
\]
where \(\omega_q^\Gamma>0\) is the lumped mass weight at \(q\).  Thus
\(H_h^{m+1}(q)=0\).

For a contact-line vertex \(q\in\partial\Gamma_h^m\), set
\[
a_q:=\hat n_h^m(q)-\frac{\hat n_h^m(q)\cdot\bar n_{\star,h}^m(q)}{|\bar n_{\star,h}^m(q)|^2}\,\bar n_{\star,h}^m(q).
\]
Then \(a_q\cdot\bar n_{\star,h}^m(q)=0\), so
\(\varphi_q a_q\in\mathring{\mathcal V}_{\cup\star,h}^m\), and {\rm(A3)} gives
\(a_q\neq0\).  Testing \eqref{eq:reduced-neg} with \(w_h=\varphi_q a_q\) gives
\[
\omega_q^\Gamma\,H_h^{m+1}(q)\,\hat n_h^m(q)\cdot a_q=0,
\qquad
\hat n_h^m(q)\cdot a_q=|a_q|^2>0,
\]
so again \(H_h^{m+1}(q)=0\).  Hence \(H_h^{m+1}\) vanishes at every vertex of
\(\Gamma_h^m\), that is, \(H_h^{m+1}\equiv0\).  Together with
\(v_h^{m+1}\equiv0\), this shows that \eqref{eq:scheme-single} has a unique
solution.
\end{proof}

\begin{theorem}[Well-posedness of the full-substrate scheme]
\label{thm:wellposed-unified}
Let $\Gamma_h^m$ and the glued substrate $\Gamma_{{\rm base},h}^m$ satisfy
{\rm(A1)}--{\rm(A3)} with $\Gamma_{{\rm base},h}^m$ in place of
$\Gamma_{\star,h}^m$, except that the contact line lies in the interior of the
substrate mesh rather than on its boundary.  Then the full-substrate scheme
\eqref{eq:scheme-unified} has a unique solution
$(v_h^{m+1},H_h^{m+1})\in\mathring{\mathcal V}_{\cup{\rm base},h}^m\times S_h(\Gamma_h^m)$.
\end{theorem}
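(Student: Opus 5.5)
Since the trial and test spaces of \eqref{eq:scheme-unified} have the same finite dimension, we follow the proof of Theorem~\ref{thm:wellposed-neg} and show that the homogeneous system has only the trivial solution. The homogeneous system is \eqref{eq:homo-neg-flow} together with the analogue of \eqref{eq:homo-neg-curv}, whose right-hand side is now
\[
\tau\gamma_0\int_{\Gamma_h^m}\nabla_{\Gamma_h^m}v_h^{m+1}\cdot\nabla_{\Gamma_h^m}w_h
+\tau\gamma_{\rm in}\int_{\Gamma_{{\rm in},h}^m}\nabla_{\Gamma_{{\rm in},h}^m}v_h^{m+1}\cdot\nabla_{\Gamma_{{\rm in},h}^m}w_h
+\tau\gamma_{\rm out}\int_{\Gamma_{{\rm out},h}^m}\nabla_{\Gamma_{{\rm out},h}^m}v_h^{m+1}\cdot\nabla_{\Gamma_{{\rm out},h}^m}w_h .
\]
We test with $w_h=v_h^{m+1}\in\mathring{\mathcal V}_{\cup{\rm base},h}^m$ and $\varphi_h=-\gamma_0H_h^{m+1}$ and add the two equations. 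This gives a sum of four nonnegative terms equal to zero. In this step the assumption $\sigma>0$ of the single-patch case is replaced by the standing assumption that $\gamma_0,\gamma_{\rm in},\gamma_{\rm out}$ are all positive. Hence $\nabla_{\Gamma_h^m}H_h^{m+1}=0$, and $v_h^{m+1}$ has vanishing gradient on each of $\Gamma_h^m$, $\Gamma_{{\rm in},h}^m$ and $\Gamma_{{\rm out},h}^m$. Because $v_h^{m+1}$ is continuous on the glued union $\Gamma_h^m\cup\Gamma_{{\rm base},h}^m$, and this union is connected by the adapted (A1), $v_h^{m+1}$ is a single constant vector.

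The flow equation then reduces to the lumped identity. Testing with nodal basis functions gives $v_h^{m+1}(q)\cdot\hat n_h^m(q)=0$ at every vertex of $\Gamma_h^m$, so the constant vanishes by (A2). There is an alternative route to this conclusion. If $\partial\Gamma_{{\rm base},h}^m\ne\emptyset$, pinning already forces the constant to be zero; if the substrate is closed, we use (A2) as above. The argument via (A2) covers both cases uniformly, so we use it. This leaves $\int_{\Gamma_h^m}^hH_h^{m+1}\,n_h^m\cdot w_h=0$ for all $w_h\in\mathring{\mathcal V}_{\cup{\rm base},h}^m$.

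It remains to show that $H_h^{m+1}$ vanishes at every vertex of $\Gamma_h^m$, and the only real point to check is the admissibility of the test functions. For an interior vertex $q\in\Gamma_h^m\setminus\partial\Gamma_h^m$, the nodal basis function $\varphi_q$ on the glued mesh is supported only on $\Gamma_h^m$. Therefore $\varphi_q\hat n_h^m(q)$ meets no substrate constraint and no pinned vertex, so it is admissible and yields $H_h^{m+1}(q)=0$.

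For a contact-line vertex $q$, the function $\varphi_q$ is now supported on $\Gamma_h^m$ and on both substrate patches, because the contact line lies in the interior of the substrate mesh. The constraint in \eqref{eq:subspace} is imposed only at vertices, and $q$ appears only once in $\mathcal Q_{{\rm base},h}^m$. Hence the single condition $a_q\cdot\bar n_{{\rm base},h}^m(q)=0$ makes $\varphi_q a_q$ admissible, where $a_q$ is the projection defined in the proof of Theorem~\ref{thm:wellposed-neg}. Moreover, $q$ is not a pinned vertex, since the contact line is disjoint from the rim by (A1). The lumped integral is taken over $\Gamma_h^m$ only, so the test gives $\omega_q^\Gamma H_h^{m+1}(q)\,|a_q|^2=0$, and (A3) gives $a_q\ne0$. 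Hence $H_h^{m+1}\equiv0$, which together with $v_h^{m+1}\equiv0$ proves uniqueness, and therefore existence.
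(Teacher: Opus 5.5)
Your proposal is correct and follows the paper's proof. The paper just says to repeat the argument of Theorem~\ref{thm:wellposed-neg} with $\Gamma_{{\rm base},h}^m$ in place of $\Gamma_{\star,h}^m$, using the three positive weights $\gamma_0,\gamma_{\rm in},\gamma_{\rm out}$ and forming $a_q$ with $\bar n_{{\rm base},h}^m(q)$ at contact-line vertices, and you carry this out in full, including the admissibility checks (single occurrence of contact-line vertices in $\mathcal Q_{{\rm base},h}^m$, disjointness from the rim) that the paper leaves implicit.
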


\begin{proof}
{As for Theorem~\ref{thm:wellposed-neg}, with
\(\Gamma_{{\rm base},h}^m\) in place of \(\Gamma_{\star,h}^m\).  The
homogeneous form of \eqref{eq:scheme-unified-curv} has three stiffness terms
with positive weights \(\gamma_0\), \(\gamma_{\rm in}\), \(\gamma_{\rm out}\),
and at a contact-line vertex the test direction \(a_q\) is formed with
\(\bar n_{{\rm base},h}^m(q)\).}
\end{proof}

\subsection{Energy stability}
\label{subsec:energy-stable}

\begin{theorem}[Energy stability of the single-patch schemes]
\label{thm:energy-neg}
The scheme \eqref{eq:scheme-single} satisfies
\begin{align}\label{eq:area-decay-1}
W_{\star,h}^{m+1}+\tau\beta\|\nabla_{\Gamma_h^m}(\gamma_0H_h^{m+1})\|_{L^2(\Gamma_h^m)}^2\le W_{\star,h}^m,\qquad
W_{\star,h}^m=\gamma_0|\Gamma_h^m|+\sigma\,|\Gamma_{\star,h}^m| .
\end{align}
\end{theorem}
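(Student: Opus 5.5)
The plan is the standard BGN testing argument, combined with the elementwise area inequality applied separately on each meshed surface.

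First choose test functions. Take $\varphi_h=\gamma_0H_h^{m+1}$ in \eqref{eq:scheme-single-flow}. Take $w_h=X_h^{m+1}-\id=\tau v_h^{m+1}$ in \eqref{eq:scheme-single-curv}; this is admissible because $v_h^{m+1}\in\mathring{\mathcal V}_{\cup\star,h}^m$. The left-hand side of \eqref{eq:scheme-single-curv} becomes $\tau\gamma_0\int^h_{\Gamma_h^m}H_h^{m+1}n_h^m\cdot v_h^{m+1}$. By \eqref{eq:scheme-single-flow} this equals $-\tau\beta\|\nabla_{\Gamma_h^m}(\gamma_0H_h^{m+1})\|^2_{L^2(\Gamma_h^m)}$. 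Together these give
\[
\gamma_0\int_{\Gamma_h^m}\nabla_{\Gamma_h^m}X_h^{m+1}\cdot\nabla_{\Gamma_h^m}(X_h^{m+1}-\id)
+\sigma\int_{\Gamma_{\star,h}^m}\nabla_{\Gamma_{\star,h}^m}X_h^{m+1}\cdot\nabla_{\Gamma_{\star,h}^m}(X_h^{m+1}-\id)
=-\tau\beta\|\nabla_{\Gamma_h^m}(\gamma_0H_h^{m+1})\|^2_{L^2(\Gamma_h^m)} .
\]

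Next, on each of the two triangulated surfaces $\Sigma_h\in\{\Gamma_h^m,\Gamma_{\star,h}^m\}$, I would invoke the area inequality of \cite{BGN2007fourth,BGN2008hypersurfaces}:
\[
\int_{\Sigma_h}\nabla_{\Sigma_h}X\cdot\nabla_{\Sigma_h}(X-\id)\ \ge\ |X(\Sigma_h)|-|\Sigma_h| \quad\text{for all } X\in(S_h(\Sigma_h))^3 .
\]
The proof is elementwise. On a flat triangle $K$, write $A=\nabla_K X$, a linear map of the tangent plane. Then $\nabla X\cdot\nabla(X-\id)=|A|^2-A:P_K\ge\tfrac12(|A|^2-|P_K|^2)\ge\tfrac12|A|^2-1$, where $P_K$ is the tangential projection with $|P_K|^2=2$. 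The Jacobian of $X|_K$ is bounded by $|a_1\times a_2|\le\tfrac12(|a_1|^2+|a_2|^2)=\tfrac12|A|^2$, with $a_1,a_2$ the images of an orthonormal tangent basis. Integrating over $K$ gives $|X(K)|-|K|$, and summing over elements gives the inequality.

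Apply this with $X=X_h^{m+1}$ restricted to each patch. This is legitimate because $X_h^{m+1}$ is continuous and piecewise linear on the glued mesh, and its restrictions lie in $(S_h(\Gamma_h^m))^3$ and $(S_h(\Gamma_{\star,h}^m))^3$. Multiply the two inequalities by $\gamma_0>0$ and $\sigma\ge0$ respectively; it is exactly here that the sign choice \eqref{eq:sigma} is needed. Since $X_h^{m+1}(\Gamma_h^m)=\Gamma_h^{m+1}$ and $X_h^{m+1}(\Gamma_{\star,h}^m)=\Gamma_{\star,h}^{m+1}$, the left side of the identity above is bounded below by $W_{\star,h}^{m+1}-W_{\star,h}^m$. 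Rearranging then yields \eqref{eq:area-decay-1}.

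The main point to check is the elementwise area inequality, specifically the Jacobian bound for a possibly degenerate image triangle. This bound holds regardless of orientation, since $|X(K)|$ counts the image area with absolute value. The rest is the bookkeeping confirming that the test function $\tau v_h^{m+1}$ lies in the test space, which the common trial/test space ensures.
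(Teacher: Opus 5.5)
Your proposal is correct and follows essentially the same route as the paper: the same test functions ($\tau v_h^{m+1}$ rather than $v_h^{m+1}$ followed by multiplication by $\tau$), then the area inequality applied on $\Gamma_h^m$ and on $\Gamma_{\star,h}^m$ with the nonnegative weights $\gamma_0$ and $\sigma\ge0$. The only difference is that you supply the elementwise proof of the area inequality, which is valid, whereas the paper simply cites it from \cite{BGN2008hypersurfaces}.
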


\begin{proof}
Choosing $\varphi_h=\gamma_0H_h^{m+1}$ in \eqref{eq:scheme-single-flow}
and
$w_h=v_h^{m+1}$ in \eqref{eq:scheme-single-curv} gives
\begin{align}\label{eq:neg-inter}
\begin{aligned}
&\gamma_0\int_{\Gamma_h^m}\nabla_{\Gamma_h^m}X_h^{m+1}\cdot\nabla_{\Gamma_h^m}v_h^{m+1}
+\sigma\int_{\Gamma_{\star,h}^m}\nabla_{\Gamma_{\star,h}^m}X_h^{m+1}\cdot\nabla_{\Gamma_{\star,h}^m}v_h^{m+1}\\
&\quad=-\beta\|\nabla_{\Gamma_h^m}(\gamma_0H_h^{m+1})\|_{L^2(\Gamma_h^m)}^2.
\end{aligned}
\end{align}
On any triangulated surface $\Sigma_h^m$ moved by $X_h^{m+1}=\id+\tau v_h^{m+1}$,
the area inequality (see \cite[(2.21)]{BGN2008hypersurfaces}, and
also \cite{Bansch2001}) bounds the area
change by the stiffness term,
\begin{align}\label{eq:area-ineq-neg}
\tau\int_{\Sigma_h^m}\nabla_{\Sigma_h^m}X_h^{m+1}\cdot\nabla_{\Sigma_h^m}v_h^{m+1}
\ge|\Sigma_h^{m+1}|-|\Sigma_h^m|.
\end{align}
Both stiffness terms in \eqref{eq:neg-inter} carry nonnegative coefficients,
$\gamma_0$ and $\sigma$, so multiplying \eqref{eq:neg-inter} by $\tau$ and
applying \eqref{eq:area-ineq-neg} on $\Gamma_h^m$ and on $\Gamma_{\star,h}^m$
gives
\[
W_{\star,h}^{m+1}-W_{\star,h}^m
=\gamma_0\bigl(|\Gamma_h^{m+1}|-|\Gamma_h^m|\bigr)+\sigma\bigl(|\Gamma_{\star,h}^{m+1}|-|\Gamma_{\star,h}^m|\bigr)
\le-\tau\beta\|\nabla_{\Gamma_h^m}(\gamma_0H_h^{m+1})\|_{L^2(\Gamma_h^m)}^2,
\]
which is \eqref{eq:area-decay-1}.
\end{proof}

\begin{theorem}[Energy stability of the full-substrate scheme]
\label{thm:energy-unified}
The scheme \eqref{eq:scheme-unified} satisfies
\[
W_h^{m+1}+\tau\beta\|\nabla_{\Gamma_h^m}(\gamma_0H_h^{m+1})\|_{L^2(\Gamma_h^m)}^2\le W_h^m,
\]
with $W_h^m$ as in \eqref{eq:full-energy}.
\end{theorem}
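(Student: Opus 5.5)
The argument follows the proof of Theorem~\ref{thm:energy-neg}, with the single substrate patch replaced by the two meshed patches $\Gamma_{{\rm in},h}^m$ and $\Gamma_{{\rm out},h}^m$. First test the flow equation \eqref{eq:scheme-unified-flow} with $\varphi_h=\gamma_0H_h^{m+1}\in S_h(\Gamma_h^m)$. Then test the curvature equation \eqref{eq:scheme-unified-curv} with $w_h=v_h^{m+1}$, which is admissible because trial and test spaces coincide: $v_h^{m+1}\in\mathring{\mathcal V}_{\cup{\rm base},h}^m$.

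The left-hand sides of the two tested equations agree, since both equal $\gamma_0\int_{\Gamma_h^m}^h H_h^{m+1}\,v_h^{m+1}\cdot n_h^m$. Subtracting, this mass-lumped coupling term cancels, and we obtain
\[
\gamma_0\int_{\Gamma_h^m}\nabla_{\Gamma_h^m}X_h^{m+1}\cdot\nabla_{\Gamma_h^m}v_h^{m+1}
+\sum_{\star\in\{{\rm in},{\rm out}\}}\gamma_\star\int_{\Gamma_{\star,h}^m}\nabla_{\Gamma_{\star,h}^m}X_h^{m+1}\cdot\nabla_{\Gamma_{\star,h}^m}v_h^{m+1}
=-\beta\|\nabla_{\Gamma_h^m}(\gamma_0H_h^{m+1})\|_{L^2(\Gamma_h^m)}^2 .
\]

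Next multiply by $\tau$ and apply the area inequality \eqref{eq:area-ineq-neg} three times, separately on $\Sigma_h^m=\Gamma_h^m$, $\Gamma_{{\rm in},h}^m$ and $\Gamma_{{\rm out},h}^m$. This is legitimate because the inequality is purely elementwise: it holds for any triangulated surface whose vertices are moved by a continuous piecewise linear map $X_h^{m+1}=\id+\tau v_h^{m+1}$. The restriction of $v_h^{m+1}$ to each patch is such a map. The new patches are exactly the images under $X_h^{m+1}$, so the right-hand sides of the three inequalities are the respective area changes.

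The weights $\gamma_0,\gamma_{\rm in},\gamma_{\rm out}$ are all positive, so the inequalities can be multiplied by them and summed without reversing direction. The sum gives $W_h^{m+1}-W_h^m\le-\tau\beta\|\nabla_{\Gamma_h^m}(\gamma_0H_h^{m+1})\|^2_{L^2(\Gamma_h^m)}$, which is the claim.

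There is no real obstacle. The step needing care is the sign argument: unlike the single-patch scheme, no density needs to be eliminated here, because all three coefficients are positive by assumption. The contact-line vertices are shared between patches, but this plays no role, since the area inequality is applied patchwise and each element belongs to exactly one patch.
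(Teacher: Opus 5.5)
Your proposal is correct and is essentially the paper's proof: testing \eqref{eq:scheme-unified-flow} with $\varphi_h=\gamma_0H_h^{m+1}$ and \eqref{eq:scheme-unified-curv} with $w_h=v_h^{m+1}$ gives $\mathcal A_h(X_h^{m+1},v_h^{m+1})=-\beta\|\nabla_{\Gamma_h^m}(\gamma_0H_h^{m+1})\|_{L^2(\Gamma_h^m)}^2$, and then you multiply by $\tau$ and apply the area inequality \eqref{eq:area-ineq-neg} on each of the three patches, using the positivity of $\gamma_0,\gamma_{\rm in},\gamma_{\rm out}$. One cosmetic point: what coincides with the left-hand side of the tested curvature equation is the mass-lumped coupling term $\gamma_0\int_{\Gamma_h^m}^h H_h^{m+1}\,v_h^{m+1}\cdot n_h^m$, not the whole left-hand side of the tested flow equation, which also contains the Dirichlet term.
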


\begin{proof}
Similarly to the proof of Theorem~\ref{thm:energy-neg},
choosing \(\varphi_h=\gamma_0H_h^{m+1}\) in \eqref{eq:scheme-unified-flow}
and \(w_h=v_h^{m+1}\) in \eqref{eq:scheme-unified-curv} yields
\[
\begin{aligned}
&\gamma_0\int_{\Gamma_h^m}\nabla_{\Gamma_h^m}X_h^{m+1}\cdot
\nabla_{\Gamma_h^m}v_h^{m+1}
+\gamma_{\rm in}\int_{\Gamma_{{\rm in},h}^m}
\nabla_{\Gamma_{{\rm in},h}^m}X_h^{m+1}\cdot
\nabla_{\Gamma_{{\rm in},h}^m}v_h^{m+1}\\
&\quad
+\gamma_{\rm out}\int_{\Gamma_{{\rm out},h}^m}
\nabla_{\Gamma_{{\rm out},h}^m}X_h^{m+1}\cdot
\nabla_{\Gamma_{{\rm out},h}^m}v_h^{m+1}
=-\beta\|\nabla_{\Gamma_h^m}(\gamma_0H_h^{m+1})\|_{L^2(\Gamma_h^m)}^2.
\end{aligned}
\]
All three stiffness terms carry positive coefficients, so multiplying by
\(\tau\) and applying the area inequality \eqref{eq:area-ineq-neg} on
\(\Gamma_h^m\), \(\Gamma_{{\rm in},h}^m\), and \(\Gamma_{{\rm out},h}^m\) bounds \(W_h^{m+1}-W_h^m\) by \(-\tau\beta\|\nabla_{\Gamma_h^m}(\gamma_0H_h^{m+1})\|_{L^2(\Gamma_h^m)}^2\), which is
the stated inequality.
\end{proof}

\subsection{The Lagrange-multiplier formulation} \label{subsec:multiplier}

So far, the attachment constraint has been built
into the velocity spaces \eqref{eq:subspace} and \eqref{eq:outer-subspace}.
We now impose it by a Lagrange multiplier.  The velocity space is then
a standard finite element space, and the substrate normal appears only in the
constraint and in the multiplier term.  This makes it easy to change the
substrate normal, which we use in Section~\ref{sec:iterative}.  We give the
details for the full-substrate scheme.  The single-patch schemes are treated in the same way, with
$\Gamma_{\star,h}^m$ in place of $\Gamma_{{\rm base},h}^m$ and
$W_{\star,h}^m$ in place of $W_h^m$.

A vertex \(q\in\mathcal Q_{{\rm base},h}^m\) is called active if
\(q\notin\partial\Gamma_{{\rm base},h}^m\).  Let
\begin{align}\label{subspace1}
\mathbb V_h^m:=\bigl\{\chi_h\in(S_h(\Gamma_{\cup{\rm base},h}^m))^3:\
\chi_h(q)=0\ \text{for all vertices }q\in\partial\Gamma_{{\rm base},h}^m\bigr\}
\end{align}
be the velocity space without the substrate attachment constraint, but with
the velocity pinned on the outer boundary, and let
\begin{align}\label{eq:Lambda-space}
\Lambda_h^m:=\bigl\{\psi_h\in S_h(\Gamma_{{\rm base},h}^m):\ \psi_h(q)=0\ \text{for all vertices }q\in\partial\Gamma_{{\rm base},h}^m\bigr\}
\end{align}
be the scalar multiplier space.  Let
\begin{align}\label{eq:Sh-defn}
\mathcal A_h(X^{m+1}_h,w_h)
:={}&\gamma_0\int_{\Gamma_h^m}\nabla_{\Gamma_h^m}X_h^{m+1}\cdot\nabla_{\Gamma_h^m}w_h
+\gamma_{\rm in}\int_{\Gamma_{{\rm in},h}^m}\nabla_{\Gamma_{{\rm in},h}^m}X_h^{m+1}\cdot\nabla_{\Gamma_{{\rm in},h}^m}w_h\notag\\
&+\gamma_{\rm out}\int_{\Gamma_{{\rm out},h}^m}\nabla_{\Gamma_{{\rm out},h}^m}X_h^{m+1}\cdot\nabla_{\Gamma_{{\rm out},h}^m}w_h
\end{align}
be the right-hand side of \eqref{eq:scheme-unified-curv}.
The mixed scheme reads: given $\Gamma_h^m$, $\Gamma_{{\rm in},h}^m$,
$\Gamma_{{\rm out},h}^m$, find
\[
(v_h^{m+1},H_h^{m+1},\lambda_h^{m+1})\in\mathbb V_h^m\times S_h(\Gamma_h^m)\times\Lambda_h^m
\]
such that, with \(X_h^{m+1}=\id+\tau v_h^{m+1}\) and for all
\((\varphi_h,w_h,\psi_h)\in
S_h(\Gamma_h^m)\times\mathbb V_h^m\times\Lambda_h^m\),
\begin{subequations}\label{eq:mixed}
\begin{align}
&\int_{\Gamma_h^m}^h(v_h^{m+1}\cdot n_h^m)\,\varphi_h
+\beta\int_{\Gamma_h^m}\nabla_{\Gamma_h^m}(\gamma_0H_h^{m+1})\cdot\nabla_{\Gamma_h^m}\varphi_h=0, \label{eq:mixed-a}\\[2pt]
&\int_{\Gamma_{{\rm base},h}^m}^h(v_h^{m+1}\cdot\bar n_{{\rm base},h}^m)\,\psi_h=0, \label{eq:mixed-c}\\[2pt]
&\gamma_0\int_{\Gamma_h^m}^hH_h^{m+1}\,n_h^m\cdot w_h
+\int_{\Gamma_{{\rm base},h}^m}^h\lambda_h^{m+1}\,\bar n_{{\rm base},h}^m\cdot w_h
=\mathcal A_h(X^{m+1}_h,w_h). \label{eq:mixed-b}
\end{align}
\end{subequations}
Equation \eqref{eq:mixed-c} imposes the attachment constraint at the active
vertices, and the multiplier term in \eqref{eq:mixed-b} is the matching normal
reaction of the substrate.

\begin{proposition}[Equivalence between the schemes~\eqref{eq:scheme-unified} and~\eqref{eq:mixed}]
\label{prop:equivalence}
Assume the hypotheses of Theorem~\ref{thm:wellposed-unified}. Then
\((v_h^{m+1},H_h^{m+1},\lambda_h^{m+1})\) solves \eqref{eq:mixed} for some
\(\lambda_h^{m+1}\in\Lambda_h^m\) if and only if \((v_h^{m+1},H_h^{m+1})\)
solves \eqref{eq:scheme-unified}, and \(\lambda_h^{m+1}\) is then unique.
Consequently, the mixed scheme has a unique solution.
\end{proposition}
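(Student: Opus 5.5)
We argue through the nodal structure of the mass-lumped constraint form. The key observation is that, by the definition \eqref{mass-lump-defn}, for any \(\psi_h\in\Lambda_h^m\) and any \(\chi_h\in\mathbb V_h^m\),
\[
\int_{\Gamma_{{\rm base},h}^m}^h(\chi_h\cdot\bar n_{{\rm base},h}^m)\,\psi_h
=\sum_{q\ \text{active}}\omega_q\,\psi_h(q)\,\chi_h(q)\cdot\bar n_{{\rm base},h}^m(q),
\]
where \(\omega_q>0\) is the lumped substrate mass weight at \(q\). The sum runs over active vertices only, because \(\psi_h\) vanishes on the pinned rim. Testing \eqref{eq:mixed-c} with the nodal basis functions of \(\Lambda_h^m\) therefore gives \(v_h^{m+1}(q)\cdot\bar n_{{\rm base},h}^m(q)=0\) at every active vertex. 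Pinned vertices satisfy \(v_h^{m+1}(q)=0\) by the definition of \(\mathbb V_h^m\). Hence \eqref{eq:mixed-c} holds for \(v_h^{m+1}\in\mathbb V_h^m\) if and only if \(v_h^{m+1}\in\mathring{\mathcal V}_{\cup{\rm base},h}^m\). Moreover, the same formula shows that the multiplier term in \eqref{eq:mixed-b} vanishes for every \(w_h\in\mathring{\mathcal V}_{\cup{\rm base},h}^m\).

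For the forward direction, suppose \((v_h^{m+1},H_h^{m+1},\lambda_h^{m+1})\) solves \eqref{eq:mixed}. Then \(v_h^{m+1}\in\mathring{\mathcal V}_{\cup{\rm base},h}^m\). Restricting \eqref{eq:mixed-b} to test functions \(w_h\in\mathring{\mathcal V}_{\cup{\rm base},h}^m\subset\mathbb V_h^m\) removes the multiplier term and yields \eqref{eq:scheme-unified-curv}. Equation \eqref{eq:mixed-a} coincides with \eqref{eq:scheme-unified-flow}.

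For the converse, let \((v_h^{m+1},H_h^{m+1})\) solve \eqref{eq:scheme-unified}. Then \eqref{eq:mixed-a} and \eqref{eq:mixed-c} hold, and we must construct \(\lambda_h^{m+1}\in\Lambda_h^m\) so that \eqref{eq:mixed-b} holds for all \(w_h\in\mathbb V_h^m\). Define the residual
\[
R(w_h):=\mathcal A_h(X_h^{m+1},w_h)-\gamma_0\int_{\Gamma_h^m}^hH_h^{m+1}\,n_h^m\cdot w_h .
\]
This is a linear functional on \(\mathbb V_h^m\) that vanishes on \(\mathring{\mathcal V}_{\cup{\rm base},h}^m\). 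Since \(\mathbb V_h^m\) is spanned by the nodal vectors \(\varphi_q e_i\), we have \(R(w_h)=\sum_q r_q\cdot w_h(q)\) with coefficient vectors \(r_q\in\mathbb R^3\), where the sum is over the unpinned vertices. The vertices of \(\Gamma_h^m\) that are not on the substrate carry no constraint, so vanishing on those directions forces \(r_q=0\) there. At an active substrate vertex, every direction orthogonal to \(\bar n_{{\rm base},h}^m(q)\) is admissible, so \(r_q\) is parallel to \(\bar n_{{\rm base},h}^m(q)\). This holds equally at contact-line vertices, where the constraint is imposed once because each such vertex occurs once in \(\mathcal Q_{{\rm base},h}^m\). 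We can therefore write \(r_q=\omega_q\,\ell_q\,\bar n_{{\rm base},h}^m(q)\) for a scalar \(\ell_q\), using \(\omega_q>0\) and the unit length of \(\bar n_{{\rm base},h}^m(q)\). Setting \(\lambda_h^{m+1}(q):=\ell_q\) at active vertices and \(0\) on the rim gives \(\lambda_h^{m+1}\in\Lambda_h^m\), and \eqref{eq:mixed-b} then holds by the nodal formula above.

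Uniqueness of \(\lambda_h^{m+1}\) follows by subtracting two multipliers \(\lambda,\lambda'\) belonging to the same \((v_h^{m+1},H_h^{m+1})\). This gives \(\int^h(\lambda-\lambda')\,\bar n_{{\rm base},h}^m\cdot w_h=0\) for all \(w_h\in\mathbb V_h^m\). Choosing \(w_h=\varphi_q\bar n_{{\rm base},h}^m(q)\) at each active vertex yields \(\omega_q(\lambda-\lambda')(q)=0\), so \(\lambda=\lambda'\). Existence and uniqueness for the mixed scheme then follow from Theorem~\ref{thm:wellposed-unified} combined with the equivalence.

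The main obstacle is the careful bookkeeping at contact-line vertices. There the nodal value \(w_h(q)\) is shared by \(\Gamma_h^m\), \(\Gamma_{{\rm in},h}^m\) and \(\Gamma_{{\rm out},h}^m\), so we must check that the residual still reduces to a single vector \(r_q\), and that the lumped weight \(\omega_q\) comes only from the substrate elements and is positive.
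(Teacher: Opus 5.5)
Your proposal is correct and takes the same route as the paper. Mass lumping turns \eqref{eq:mixed-c} into the nodal constraint defining \(\mathring{\mathcal V}_{\cup{\rm base},h}^m\), and the multiplier exists and is unique because the residual vanishes on the common kernel of the linearly independent functionals \(w_h\mapsto w_h(q)\cdot\bar n_{{\rm base},h}^m(q)\) at the active vertices; your nodal decomposition \(r_q=\omega_q\ell_q\,\bar n_{{\rm base},h}^m(q)\) and the uniqueness test with \(\varphi_q\bar n_{{\rm base},h}^m(q)\) just make that linear-algebra step explicit.
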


\begin{proof}
{Clearly a solution of \eqref{eq:mixed} solves
\eqref{eq:scheme-unified}: by mass lumping, \eqref{eq:mixed-c} means
\(v_h^{m+1}\in\mathring{\mathcal V}_{\cup{\rm base},h}^m\), on which the
multiplier term in \eqref{eq:mixed-b} vanishes.  Conversely,
\eqref{eq:scheme-unified-curv} says that the residual
\(\mathcal A_h(X^{m+1}_h,w_h)-\gamma_0\int_{\Gamma_h^m}^hH_h^{m+1}n_h^m\cdot w_h\)
vanishes on \(\mathring{\mathcal V}_{\cup{\rm base},h}^m\), the common kernel of
the linearly independent functionals
\(w_h\mapsto w_h(q)\cdot\bar n_{{\rm base},h}^m(q)\); it is therefore a unique
combination of them, which defines \(\lambda_h^{m+1}\) at the active vertices.
Theorem~\ref{thm:wellposed-unified} gives the last claim that the mixed scheme has a unique solution.}
\end{proof}

\section{A nonlinear scheme for nodal substrate attachment}
\label{sec:iterative}

The linear schemes of Section~\ref{sec:schemes} approximate
the attachment condition through the fixed nodal normal $\bar n_{{\rm base},h}^m$, so the
updated substrate vertices need not stay on $\Gamma_{\rm base}$. As an alternative,
we now choose a substrate normal that depends on the unknown update instead.
This gives a nonlinear scheme that guarantees that each vertex of the substrate
mesh remains attached to $\Gamma_{\rm base}$. The resulting nonlinear
system can be solved by a Picard-type fixed-point iteration, where each iterate
can be shown to be energy stable.
For sake of brevity, we only give the details for the
full-substrate scheme, concentrating on the Lagrange-multiplier form \eqref{eq:mixed}.

\subsection{Energy stability is independent of the discrete normals}
\label{subsec:universal}

A key observation is that the energy estimates of Section~\ref{sec:schemes} do not use the values of $n_h^m$ and $\bar n_{{\rm base},h}^m$.
For the sake of stability, both normals may therefore be replaced by any nodal fields.

Let $N_h$ be a nodal field on $\Gamma_h^m$ and $M_h$ a nodal field on
$\Gamma_{{\rm base},h}^m$, and replace $n_h^m$ by $N_h$ and
$\bar n_{{\rm base},h}^m$ by $M_h$ in \eqref{eq:mixed}. {{The mass-lumped
products use only nodal values, so the original scheme is the case}
$N_h(q)=\hat n_h^m(q)$, $M_h(q)=\bar n_{{\rm base},h}^m(q)$.} The generalized
mixed scheme reads: find
$(v_h^{m+1},H_h^{m+1},\lambda_h^{m+1})\in
\mathbb V_h^m\times S_h(\Gamma_h^m)\times\Lambda_h^m$ such that, for all
$(\varphi_h,w_h,\psi_h)\in S_h(\Gamma_h^m)\times\mathbb V_h^m\times\Lambda_h^m$,
\begin{subequations}\label{eq:mixed-gen}
\begin{align}
&\int_{\Gamma_h^m}^h(v_h^{m+1}\cdot N_h)\,\varphi_h
+\beta\int_{\Gamma_h^m}\nabla_{\Gamma_h^m}(\gamma_0H_h^{m+1})\cdot\nabla_{\Gamma_h^m}\varphi_h=0,
\label{eq:mixed-gen-a}\\
&\int_{\Gamma_{{\rm base},h}^m}^h(v_h^{m+1}\cdot M_h)\,\psi_h=0,
\label{eq:mixed-gen-c}\\
&\gamma_0\int_{\Gamma_h^m}^hH_h^{m+1}\,N_h\cdot w_h
+\int_{\Gamma_{{\rm base},h}^m}^h\lambda_h^{m+1}\,M_h\cdot w_h
=\mathcal A_h(X^{m+1}_h,w_h),
\label{eq:mixed-gen-b}
\end{align}
\end{subequations}
{where $\mathcal A_h(X^{m+1}_h,w_h)$, defined in \eqref{eq:Sh-defn}, is the right-hand
side of \eqref{eq:mixed-b}.}

\begin{lemma}[Energy stability for general nodal normals]
\label{lem:universal}
Assume the geometric conditions of Theorem~\ref{thm:wellposed-unified} and the following conditions on \(N_h\) and \(M_h\): $N_h(q)\neq0$ at every vertex of
$\Gamma_h^m$, and these values span $\mathbb R^3$; $M_h(q)\neq0$ at every active substrate vertex; and
$N_h(q),M_h(q)$ are linearly independent at every contact-line vertex. Then
\eqref{eq:mixed-gen} has a unique solution, and it satisfies
\[
W_h^{m+1}+\tau\beta\|\nabla_{\Gamma_h^m}(\gamma_0H_h^{m+1})\|_{L^2(\Gamma_h^m)}^2\le W_h^m
\]
with $W_h^m$ as in \eqref{eq:full-energy}.
\end{lemma}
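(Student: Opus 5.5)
The plan is to follow the proofs of Theorem~\ref{thm:wellposed-neg}, Theorem~\ref{thm:wellposed-unified} and Proposition~\ref{prop:equivalence}, replacing \(\hat n_h^m\) by \(N_h\) and \(\bar n_{{\rm base},h}^m\) by \(M_h\). Only nodal values enter the lumped products, so we first reduce \eqref{eq:mixed-gen} to a constrained problem. Let
\[
\mathcal V_h(M_h):=\{\chi_h\in\mathbb V_h^m:\ \chi_h(q)\cdot M_h(q)=0 \text{ at every active vertex } q\}.
\]
Testing \eqref{eq:mixed-gen-c} with nodal basis functions \(\psi_h\) of the active vertices shows that \eqref{eq:mixed-gen-c} is equivalent to \(v_h^{m+1}\in\mathcal V_h(M_h)\); the lumped weights are positive because elements are nondegenerate by (A1). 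For \(w_h\in\mathcal V_h(M_h)\) the multiplier term in \eqref{eq:mixed-gen-b} vanishes, so \((v_h^{m+1},H_h^{m+1})\) solves the analogue of \eqref{eq:scheme-unified} on \(\mathcal V_h(M_h)\).

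For existence and uniqueness, the mixed system is square, so it suffices to show that the homogeneous problem has only the trivial solution. Test \eqref{eq:mixed-gen-a} with \(\varphi_h=-\gamma_0H_h^{m+1}\) and \eqref{eq:mixed-gen-b} with \(w_h=v_h^{m+1}\). The multiplier term drops because \(v_h^{m+1}\in\mathcal V_h(M_h)\), and the \(N_h\)-terms cancel because lumping gives \(\int^h_{\Gamma_h^m}(v_h\cdot N_h)H_h=\int^h_{\Gamma_h^m}H_hN_h\cdot v_h\). This leaves a sum of nonnegative terms equal to zero, so \(H_h^{m+1}\) is constant and \(v_h^{m+1}\) is constant on the connected union. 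Then \eqref{eq:mixed-gen-a} gives \(v_h^{m+1}\cdot N_h(q)=0\) at every vertex, and the spanning assumption forces \(v_h^{m+1}=0\).

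Next, test \eqref{eq:mixed-gen-b} at vertices, where the proof must differ from the earlier ones. At a free-surface vertex \(q\) off the contact line, \(w_h=\varphi_qN_h(q)\) gives \(H_h^{m+1}(q)=0\), since \(q\) is not a substrate vertex. At a contact-line vertex, use \(a_q\), the component of \(N_h(q)\) orthogonal to \(M_h(q)\); it is nonzero by linear independence, so \(H_h^{m+1}(q)=0\). Finally, at every active substrate vertex \(q\), take \(w_h=\varphi_qM_h(q)\). The free-surface term vanishes, either because \(q\notin\Gamma_h^m\) or because \(H_h^{m+1}(q)=0\) is already known. This leaves \(\omega_q\lambda_h^{m+1}(q)|M_h(q)|^2=0\), so \(\lambda_h^{m+1}=0\).

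Energy stability then follows exactly as in Theorem~\ref{thm:energy-unified}. Test with \(\varphi_h=\gamma_0H_h^{m+1}\) and \(w_h=v_h^{m+1}\); the multiplier term vanishes by \eqref{eq:mixed-gen-c} and the \(N_h\)-terms cancel by the same lumping symmetry. This yields \(\mathcal A_h(X_h^{m+1},v_h^{m+1})=-\beta\|\nabla_{\Gamma_h^m}(\gamma_0H_h^{m+1})\|^2_{L^2(\Gamma_h^m)}\). Multiplying by \(\tau\) and applying \eqref{eq:area-ineq-neg} on each of the three meshes gives the claimed bound. No property of \(N_h\) or \(M_h\) is used in this step.

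The only delicate point is that the nonzero conditions on \(M_h\) are used to pin down \(\lambda_h^{m+1}\), while the contact-line independence is used for \(H_h^{m+1}\).
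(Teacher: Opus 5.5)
Your proposal is correct and follows the paper's proof. Unique solvability comes from rerunning the arguments of Theorem~\ref{thm:wellposed-unified} and Proposition~\ref{prop:equivalence} with \(N_h\), \(M_h\) in place of \(\hat n_h^m\), \(\bar n_{{\rm base},h}^m\). Energy stability comes from testing with \(\varphi_h=\gamma_0H_h^{m+1}\) and \(w_h=v_h^{m+1}\), where the multiplier term drops by \eqref{eq:mixed-gen-c}, and then applying \eqref{eq:area-ineq-neg} on all three meshes. The only cosmetic difference is that you eliminate \(\lambda_h^{m+1}\) in the homogeneous mixed system directly by testing with \(\varphi_q M_h(q)\), whereas the paper recovers it from the linear independence of the functionals \(w_h\mapsto w_h(q)\cdot M_h(q)\) as in Proposition~\ref{prop:equivalence}; both rest on \(M_h(q)\neq0\) at active vertices.
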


\begin{proof}
Unique solvability follows as in
Theorem~\ref{thm:wellposed-unified} and Proposition~\ref{prop:equivalence},
with the hypotheses on \((N_h,M_h)\) in place of {\rm(A2)}--{\rm(A3)}{, and with \(M_h\) in place of \(\bar n_{{\rm base},h}^m\)}.
Take \(w_h=v_h^{m+1}\) in \eqref{eq:mixed-gen-b},
\(\varphi_h=\gamma_0H_h^{m+1}\) in \eqref{eq:mixed-gen-a}, and
\(\psi_h=\lambda_h^{m+1}\) in \eqref{eq:mixed-gen-c}. The constraint
\eqref{eq:mixed-gen-c} cancels the multiplier term
\(\int_{\Gamma_{{\rm base},h}^m}^h\lambda_h^{m+1}M_h\cdot v_h^{m+1}\),
and the flow equation \eqref{eq:mixed-gen-a} turns the curvature term into
\(-\beta\|\nabla_{\Gamma_h^m}(\gamma_0H_h^{m+1})\|_{L^2(\Gamma_h^m)}^2\). Then
\[
\mathcal A_h(X^{m+1}_h,v_h^{m+1})
=-\beta\|\nabla_{\Gamma_h^m}(\gamma_0H_h^{m+1})\|_{L^2(\Gamma_h^m)}^2\le0.
\]
All three stiffness terms in \(\mathcal A_h(X^{m+1}_h,v_h^{m+1})\) carry positive
coefficients, so multiplying by \(\tau\) and applying the area inequality \eqref{eq:area-ineq-neg} on
\(\Gamma_h^m\), \(\Gamma_{{\rm in},h}^m\), and \(\Gamma_{{\rm out},h}^m\)
gives the inequality.
\end{proof}

\begin{remark}
\label{rem:non-normalized}
{Let \(a\) be a nodal function with \(a(q)\ne0\) at every active vertex.
Because the lumped integrals only use nodal values, replacing \(M_h\) by
\(aM_h\) and \(\lambda_h^{m+1}\) by \(\lambda_h^{m+1}/a\) leaves
\eqref{eq:mixed-gen-c} and the term \(\lambda_h^{m+1}M_h\) in
\eqref{eq:mixed-gen-b} unchanged, so \(v_h^{m+1}\) and \(H_h^{m+1}\) do not
change.  In particular, the normalization in \eqref{eq:level-set-normal} is a
convention: \(M_h=\nabla\phi\) gives the same solution.}
\end{remark}

{Within the hypotheses of Lemma~\ref{lem:universal}, the choice of
$(N_h,M_h)$ therefore affects accuracy but not stability. We keep
$N_h=\hat n_h^m$ and in the following construct $M_h$ {so that the substrate vertices stay on the substrate.}}

\subsection{Nodal substrate attachment}
\label{subsec:averaged}

Let \(q\) be a substrate vertex.  We call
\(\mathcal M_q:\mathbb R^3\to\mathbb R^3\) an attachment map if
\begin{align}
\label{eq:discrete-chain-rule}
\tau\,\mathcal M_q(z)\cdot z
=
\phi(q+\tau z)-\phi(q).
\end{align}
With \(M_h(q):=\mathcal M_q(v_h^{m+1}(q))\), the attachment constraint
\(v_h^{m+1}(q)\cdot M_h(q)=0\) at an active vertex and
\eqref{eq:discrete-chain-rule} give
\(\phi(X_h^{m+1}(q))=\phi(q+\tau v_h^{m+1}(q))=\phi(q)\).  Two attachment maps
are the averaged secant and, for \(z\ne0\), the difference quotient
\begin{align}
\mathcal M_q^{\rm sec}(z)
&:=\int_0^1 \nabla\phi(q+s\tau z)\,\diff s ,
\label{eq:secant-normal}\\
\mathcal M_q^{\rm dq}(z)
&:=\nabla\phi(q)
+\frac{\phi(q+\tau z)-\phi(q)-\tau\nabla\phi(q)\cdot z}{\tau |z|^2}\,z ,
\label{eq:rank-one}
\end{align}
with \(\mathcal M_q^{\rm dq}(0):=\nabla\phi(q)\).  The first is computed exactly
by the midpoint rule when \(\phi\) is quadratic; the second needs no
quadrature.  Both satisfy \eqref{eq:discrete-chain-rule} and
\(|\mathcal M_q(z)-\nabla\phi(q)|\le\frac12\tau|z|\sup_{0\le s\le1}\|D^2\phi(q+s\tau z)\|\),
so for small \(\tau|z|\) the map \(\mathcal M_q(z)\) is nonzero and, at a
contact-line vertex, not parallel to \(N_h(q)\) by {\rm(A3)}.

\begin{theorem}[{Nodal attachment} and energy stability]
\label{theorem:exact}
Let \(N_h=\hat n_h^m\) and \(M_h(q)=\mathcal M_q(v_h^{m+1}(q))\) with
\(\mathcal M_q\) from \eqref{eq:secant-normal} or \eqref{eq:rank-one}, and let
\((v_h^{m+1},H_h^{m+1},\lambda_h^{m+1})\) solve \eqref{eq:mixed-gen}.  Then
\[
W_h^{m+1}+\tau\beta\|\nabla_{\Gamma_h^m}(\gamma_0H_h^{m+1})\|_{L^2(\Gamma_h^m)}^2\le W_h^m,
\qquad
\phi\bigl(X_h^{m+1}(q)\bigr)=\phi(q)
\]
at every substrate vertex \(q\).
\end{theorem}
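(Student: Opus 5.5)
The argument has two independent parts, an energy estimate and the attachment identity, and neither needs the solution to be unique or obtained by a particular iteration. The theorem assumes that \((v_h^{m+1},H_h^{m+1},\lambda_h^{m+1})\) solves \eqref{eq:mixed-gen} with the solution-dependent field \(M_h(q)=\mathcal M_q(v_h^{m+1}(q))\). Once the solution is given, \(M_h\) is a fixed nodal field on \(\Gamma_{{\rm base},h}^m\). So I will freeze \(M_h\) at these values and treat \eqref{eq:mixed-gen} as a linear system in which the given triple is a solution.

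For the energy law I will rerun the testing argument from the proof of Lemma~\ref{lem:universal}, taking \(w_h=v_h^{m+1}\) in \eqref{eq:mixed-gen-b}, \(\varphi_h=\gamma_0H_h^{m+1}\) in \eqref{eq:mixed-gen-a}, and \(\psi_h=\lambda_h^{m+1}\) in \eqref{eq:mixed-gen-c}. The multiplier term cancels against the constraint. This holds for any nodal \(M_h\), because the mass-lumped products only see nodal values. The curvature term becomes \(-\beta\|\nabla_{\Gamma_h^m}(\gamma_0H_h^{m+1})\|^2_{L^2(\Gamma_h^m)}\), so \(\mathcal A_h(X_h^{m+1},v_h^{m+1})\le 0\). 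Multiplying by \(\tau\) and applying the area inequality \eqref{eq:area-ineq-neg} on the three meshed surfaces, whose weights \(\gamma_0,\gamma_{\rm in},\gamma_{\rm out}\) are positive, gives the stated bound.

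This step uses none of the nondegeneracy hypotheses of Lemma~\ref{lem:universal}, since those only serve unique solvability and the theorem already assumes a solution. I therefore quote only the testing computation from that lemma and not its full statement.

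For attachment I split the substrate vertices into two kinds. At a pinned vertex \(q\in\partial\Gamma_{{\rm base},h}^m\), \(v_h^{m+1}\in\mathbb V_h^m\) gives \(v_h^{m+1}(q)=0\), so \(X_h^{m+1}(q)=q\) and the identity is trivial. At an active vertex \(q\), I test \eqref{eq:mixed-gen-c} with the nodal basis function \(\psi_h=\varphi_q\in\Lambda_h^m\). By mass lumping this gives \(\omega_q\,v_h^{m+1}(q)\cdot M_h(q)=0\) with \(\omega_q>0\), hence \(v_h^{m+1}(q)\cdot\mathcal M_q(v_h^{m+1}(q))=0\). Applying \eqref{eq:discrete-chain-rule} with \(z=v_h^{m+1}(q)\) then yields \(\phi(q+\tau v_h^{m+1}(q))-\phi(q)=0\), which is the claim.

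The remaining task, and the only real obstacle, is to check that both maps satisfy \eqref{eq:discrete-chain-rule}. For \(\mathcal M_q^{\rm sec}\) this is the fundamental theorem of calculus applied to \(s\mapsto\phi(q+s\tau z)\), whose derivative is \(\tau\nabla\phi(q+s\tau z)\cdot z\). This requires the segment from \(q\) to \(q+\tau z\) to lie in \(\mathcal U\), where \(\phi\) is smooth, which I take as part of the standing assumption that the substrate vertices stay in \(\mathcal U\). For \(\mathcal M_q^{\rm dq}\) with \(z\neq0\), a direct expansion gives \(\tau\mathcal M_q^{\rm dq}(z)\cdot z=\tau\nabla\phi(q)\cdot z+\phi(q+\tau z)-\phi(q)-\tau\nabla\phi(q)\cdot z\), which is the required identity. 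For \(z=0\) both sides vanish.
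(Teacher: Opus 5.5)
Your proposal is correct and follows the paper's proof. The energy bound is the testing argument of Lemma~\ref{lem:universal}, which uses no property of \(N_h,M_h\). Attachment follows from \(v_h^{m+1}(q)\cdot\mathcal M_q(v_h^{m+1}(q))=0\) together with \eqref{eq:discrete-chain-rule} at active vertices, and from pinning at the rim. Your explicit checks of \eqref{eq:discrete-chain-rule} for both maps fill in what the paper asserts without proof; note that the segment need not lie in \(\mathcal U\), since \(\phi\) is smooth on all of \(\mathbb R^3\) and \(\mathcal U\) only guarantees \(|\nabla\phi|>0\).
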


\begin{proof}
{{The energy inequality follows as in the proof of
Lemma~\ref{lem:universal}, which uses no property of \(N_h\) and \(M_h\). At
active vertices the attachment identity is the computation after
\eqref{eq:discrete-chain-rule}. At pinned vertices it holds because
\(v_h^{m+1}(q)=0\).}}
\end{proof}

\subsection{The iterative scheme}
\label{subsec:iterative}

Because the attachment field depends on the unknown velocity, the scheme
in Theorem~\ref{theorem:exact} is nonlinear. {We solve it by the
Picard iteration of Algorithm~\ref{alg:picard}, which freezes the field at the
previous iterate, \(M_h^{(k)}(q)=\mathcal M_q(v_h^{(k)}(q))\) with
\(\mathcal M_q\) from \eqref{eq:secant-normal} or \eqref{eq:rank-one}. Each
step then solves the linear system \eqref{eq:mixed-gen} with
\(M_h=M_h^{(k)}\).  All iterates are computed on the meshes at
\(t_m\), so \(N_h=\hat n_h^m\) stays fixed and only \(M_h\) changes; the
meshes are moved once, after the last iterate.} The initial
guess \(v_h^{(0)}\) is obtained from the linear scheme \eqref{eq:mixed} with
the level-set normal \eqref{eq:level-set-normal} at the vertices of \(\Gamma_{{\rm base},h}^m\).

\makeatletter
{\renewcommand\fs@ruled{\def\@fs@cfont{\bfseries}%
  \let\@fs@capt\floatc@ruled
  \def\@fs@pre{{\color{black}\hrule height.8pt depth0pt}\kern2pt}%
  \def\@fs@post{\kern2pt{\color{black}\hrule}\relax}%
  \def\@fs@mid{\kern2pt{\color{black}\hrule}\kern2pt}%
  \let\@fs@iftopcapt\iftrue}
\begin{algorithm}[t]
\caption{A time step \(t_m\to t_{m+1}\).}
\label{alg:picard}
\begin{algorithmic}[1]
\Require
\(\Gamma_h^m\), \(\Gamma_{{\rm base},h}^m\), \(\phi\), \(\tau\),
and attachment tolerance \({\rm tol}_{\phi}\)
\State
solve \eqref{eq:mixed} with the level-set normal \eqref{eq:level-set-normal} for \(v_h^{(0)}\);
set \(k\gets0\)
\Repeat
  \State
  assemble \(M_h^{(k)}(q)=\mathcal M_q(v_h^{(k)}(q))\) using either
  \eqref{eq:secant-normal} or \eqref{eq:rank-one}
  \State
  solve \eqref{eq:mixed-gen} with \(N_h=\hat n_h^m\), \(M_h=M_h^{(k)}\) for
  \((v_h^{(k+1)},H_h^{(k+1)},\lambda_h^{(k+1)})\)
  \State \(k\gets k+1\)
\Until{
\(\displaystyle
\max_q
\left|
\phi(q+\tau v_h^{(k)}(q))-\phi(q)
\right|
\le {\rm tol}_{\phi}\)}
\State
set \(v_h^{m+1}\gets v_h^{(k)}\) and
\(X_h^{m+1}\gets\id+\tau v_h^{m+1}\)
\State
update all meshed surfaces with \(X_h^{m+1}\)
\end{algorithmic}
\end{algorithm}
}
\makeatother

\begin{theorem}[Energy stability and attachment at a fixed point]
\label{thm:iterative}
{Assume that \(N_h\) and every frozen field \(M_h^{(k)}\) satisfy the
conditions of Lemma~\ref{lem:universal}, so that every iterate is well
defined.}
Then:
\begin{enumerate}[label={\rm(\roman*)}]
\item
{Every Picard iterate is energy stable, \(W_h^{(k+1)}+\tau\beta\|\nabla_{\Gamma_h^m}(\gamma_0H_h^{(k+1)})\|_{L^2(\Gamma_h^m)}^2\le W_h^m\),}
where \(W_h^{(k+1)}\) is the energy \eqref{eq:full-energy} after a
trial move by \(\id+\tau v_h^{(k+1)}\). The
estimate holds for every \(k\), regardless of the stopping tolerance.

\item
{A fixed point $v_h^{m+1}$ of the iteration satisfies
$M_h(q)=\mathcal M_q(v_h^{m+1}(q))$ and therefore solves the nonlinear system
of Theorem~\ref{theorem:exact}. In particular
$\phi(q+\tau v_h^{m+1}(q))=\phi(q)$ at every substrate vertex, so an initially
attached mesh remains exactly attached.}
\end{enumerate}
\end{theorem}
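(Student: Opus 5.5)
Part (i) is a direct application of Lemma~\ref{lem:universal}, and (ii) is a bookkeeping argument about what it means to be a fixed point, after which Theorem~\ref{theorem:exact} applies.

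For (i), fix \(k\ge0\). The iterate \((v_h^{(k+1)},H_h^{(k+1)},\lambda_h^{(k+1)})\) is, by construction in Algorithm~\ref{alg:picard}, the solution of the linear system \eqref{eq:mixed-gen} on the meshes at \(t_m\), with \(N_h=\hat n_h^m\) and the frozen field \(M_h=M_h^{(k)}\). By assumption this pair satisfies the hypotheses of Lemma~\ref{lem:universal}, so the solution exists and is unique. The energy estimate of that lemma then holds verbatim with \(X_h^{m+1}\) replaced by \(\id+\tau v_h^{(k+1)}\). Its proof tests \eqref{eq:mixed-gen-b} with \(w_h=v_h^{(k+1)}\), \eqref{eq:mixed-gen-a} with \(\varphi_h=\gamma_0H_h^{(k+1)}\), and \eqref{eq:mixed-gen-c} with \(\psi_h=\lambda_h^{(k+1)}\). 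The multiplier term cancels because the same \(M_h^{(k)}\) appears in both \eqref{eq:mixed-gen-c} and \eqref{eq:mixed-gen-b}; this is why freezing the field inside each linear solve matters. The area inequality \eqref{eq:area-ineq-neg} on the three meshed surfaces, with their positive weights \(\gamma_0,\gamma_{\rm in},\gamma_{\rm out}\), then yields \(W_h^{(k+1)}+\tau\beta\|\nabla_{\Gamma_h^m}(\gamma_0H_h^{(k+1)})\|^2_{L^2(\Gamma_h^m)}\le W_h^m\). Nothing in this uses convergence or the stopping tolerance, so the bound holds for every \(k\).

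For (ii), a fixed point is a \(v_h^{m+1}\) with the following property: when \(M_h:=\mathcal M_q(v_h^{m+1}(q))\) is assembled and \eqref{eq:mixed-gen} is solved, the velocity returned is \(v_h^{m+1}\) itself, together with some \(H_h^{m+1},\lambda_h^{m+1}\). Hence \((v_h^{m+1},H_h^{m+1},\lambda_h^{m+1})\) solves \eqref{eq:mixed-gen} with \(N_h=\hat n_h^m\) and \(M_h(q)=\mathcal M_q(v_h^{m+1}(q))\), which is exactly the nonlinear system of Theorem~\ref{theorem:exact}. That theorem gives \(\phi(q+\tau v_h^{m+1}(q))=\phi(q)\) at every substrate vertex. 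Directly, at active vertices \eqref{eq:mixed-gen-c} with nodal test functions gives \(v_h^{m+1}(q)\cdot M_h(q)=0\) by mass lumping, and then \eqref{eq:discrete-chain-rule} applies. At pinned vertices \(v_h^{m+1}(q)=0\). If the initial mesh is attached, \(\phi(q)=0\), so \(\phi(X_h^{m+1}(q))=0\), and induction over the time steps keeps the mesh exactly attached.

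The main subtlety is the fixed-point step. One must be precise that the fixed-point identity applies to the velocity only: \(H\) and \(\lambda\) are then determined uniquely from \(v\) by Lemma~\ref{lem:universal}. One must also check that the nodal identity \(v_h(q)\cdot M_h(q)=0\) really follows from \eqref{eq:mixed-gen-c}. This needs a positive lumped weight at each active vertex and nodal basis functions lying in \(\Lambda_h^m\), and both hold by the definitions of \(\Lambda_h^m\) and of the mass-lumped product.
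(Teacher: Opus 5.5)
Your proposal is correct and follows the paper's proof: part (i) is Lemma~\ref{lem:universal} applied to each frozen linear system with \(M_h=M_h^{(k)}\), and part (ii) is Theorem~\ref{theorem:exact} applied at the fixed point, where \(M_h(q)=\mathcal M_q(v_h^{m+1}(q))\). Your extra details only unpack what the paper cites: the multiplier term cancels because the same frozen field appears in both pairings, the nodal identity follows from mass lumping, and the pinned vertices have zero velocity.
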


\begin{proof}
{For each \(k\) the frozen system is an instance of \eqref{eq:mixed-gen},
so part~(i) is Lemma~\ref{lem:universal}; part~(ii) is
Theorem~\ref{theorem:exact} applied at the fixed point.}
\end{proof}

\section{Fully anisotropic surface energies}
\label{sec:anisotropic}

We now let {the film density \(\gamma\) and the substrate densities
\(\gamma_{\rm in}\) and \(\gamma_{\rm out}\)} depend on the orientation, using the class of
anisotropies of \cite{BGN2008plane,BGN2008aniso}, and prove that the resulting scheme is
unconditionally energy stable.

\subsection{Anisotropic energies}
\label{subsec:aniso-energy}

Following \cite{BGN2008plane,BGN2008aniso}, let \(\underline G_\ell\in\mathbb R^{3\times3}\),
\(\ell=1,\dots,L\), be symmetric positive definite, and let the film
density and its Cahn--Hoffman vector \cite{CahnHoffman1974} be
\begin{align}\label{eq:aniso-gamma}
\gamma(\vec p):=\sum_{\ell=1}^L\gamma_\ell(\vec p),\qquad
\gamma_\ell(\vec p):=\sqrt{\vec p\cdot\underline G_\ell\,\vec p},
\end{align}
\begin{align}\label{eq:cahn-hoffman}
\vec\xi(\vec p):=\gamma'(\vec p)=\sum_{\ell=1}^L\frac{\underline G_\ell\vec p}{\gamma_\ell(\vec p)},
\qquad \vec\xi(\vec p)\cdot\vec p=\gamma(\vec p).
\end{align}
Both are defined for \(\vec p\ne0\), and \(\gamma\) is one-homogeneous; the
isotropic case \(\gamma(\vec p)=\gamma_0|\vec p|\) is that of
Sections~\ref{sec:continuous}--\ref{sec:iterative}.  The substrate densities
\(\gamma_{\rm in}\) and \(\gamma_{\rm out}\) have the same form with their own
parameters, are evaluated at \(n_b\), and need not agree on the contact line.
For \(\gamma_\star\in\{\gamma,\gamma_{\rm in},\gamma_{\rm out}\}\) we write
\(L_{\gamma_\star}\), \(\underline G_{\gamma_\star,\ell}\),
\(\widetilde{\underline G}_{\gamma_\star,\ell}\), \(\vec\xi_{\gamma_\star}\)
for its quantities.  The total energy is
\begin{align}\label{eq:aniso-energy-W}
W^\gamma(t):=\int_{\Gamma(t)}\gamma(n)+\int_{\Gamma_{\rm in}(t)}\gamma_{\rm in}(n_b)+\int_{\Gamma_{\rm out}(t)}\gamma_{\rm out}(n_b).
\end{align}
With \(\widetilde{\underline G}_\ell:=[\det\underline G_\ell]^{1/2}\underline G_\ell^{-1}\)
and \((\vec u,\vec v)_{\widetilde{\underline G}_\ell}:=\vec u\cdot\widetilde{\underline G}_\ell\vec v\),
let \(\{t_1,t_2\}\) be a tangent basis orthonormal in
\((\cdot,\cdot)_{\widetilde{\underline G}_\ell}\).  The anisotropic surface
gradient, divergence, and pairing are
\(\nabla_s^{\widetilde G_\ell}f:=\sum_i(\partial_{t_i}f)\,t_i\),
\(\nabla_s^{\widetilde G_\ell}\cdot\vec f:=\sum_i(\partial_{t_i}\vec f,t_i)_{\widetilde{\underline G}_\ell}\), and
\((\nabla_s^{\widetilde G_\ell}\vec\chi,\nabla_s^{\widetilde G_\ell}w)_{\widetilde{\underline G}_\ell}:=\sum_i(\partial_{t_i}\vec\chi,\partial_{t_i}w)_{\widetilde{\underline G}_\ell}\)
\cite{BGN2008aniso,BGN2020handbook}.

\subsection{Anisotropic curvature and the Young--Herring relation}
\label{subsec:aniso-curvature}

For a \(C^2\) surface \(\Sigma\) with unit normal \(n_{\scriptscriptstyle\Sigma}\) and
density \(\gamma_\star\), the anisotropic mean curvature
\(\varkappa_{\gamma_\star}:=\nabla_\Sigma\cdot\vec\xi_{\gamma_\star}(n_{\scriptscriptstyle\Sigma})\)
satisfies \cite[Theorem~2.1]{BGN2008aniso}
\begin{align}\label{eq:aniso-id-H}
-\varkappa_{\gamma_\star}\,n_{\scriptscriptstyle\Sigma}
=\sum_{\ell=1}^{L_{\gamma_\star}}\gamma_{\star,\ell}(n_{\scriptscriptstyle\Sigma})\,\widetilde{\underline G}_{\gamma_\star,\ell}\,\nabla_s^{\widetilde G_{\gamma_\star,\ell}}\!\cdot\!\nabla_s^{\widetilde G_{\gamma_\star,\ell}}\,\id\quad\text{on }\Sigma,
\end{align}
which for \(\gamma_\star(\vec p)=|\vec p|\) reduces to \(-Hn_{\scriptscriptstyle\Sigma}=\Delta_\Sigma\id\).
The first variation of the individual summands of the energy $W^\gamma$ can be written in the weak form \cite{BGN2008aniso,BGN2010clusters}:
\begin{align}\label{eq:aniso-area-var}
\delta\Bigl(\int_\Sigma\gamma_\star(n_{\scriptscriptstyle\Sigma})\Bigr)(w)=\bigl\langle\nabla_s^{\widetilde G_{\gamma_\star}}\id,\nabla_s^{\widetilde G_{\gamma_\star}}w\bigr\rangle_{\Sigma,\gamma_\star},
\end{align}
where
\begin{align}\label{eq:bilinear}
\bigl\langle\nabla_s^{\widetilde G_{\gamma_\star}}\vec\chi,\nabla_s^{\widetilde G_{\gamma_\star}}w\bigr\rangle_{\Sigma,\gamma_\star}
:=\sum_{\ell=1}^{L_{\gamma_\star}}\int_\Sigma\bigl(\nabla_s^{\widetilde G_{\gamma_\star,\ell}}\vec\chi,\nabla_s^{\widetilde G_{\gamma_\star,\ell}}w\bigr)_{\widetilde{\underline G}_{\gamma_\star,\ell}}\gamma_{\star,\ell}(n_{\scriptscriptstyle\Sigma}).
\end{align}
If
\(\mu_{\scriptscriptstyle\Sigma,\scriptscriptstyle\partial}\) is the outward
conormal and \(\tau_{\scriptscriptstyle\Sigma}\) the unit tangent of
\(\partial\Sigma\) with
\(n_{\scriptscriptstyle\Sigma}=\mu_{\scriptscriptstyle\Sigma,\scriptscriptstyle\partial}\times\tau_{\scriptscriptstyle\Sigma}\),
the \(\widetilde{\underline G}_{\gamma_\star,\ell}\)-divergence theorem for the right hand side of \eqref{eq:bilinear}, see \cite{BGN2008aniso, BGN2010clusters}, gives a
boundary term involving the anisotropic conormals
\begin{align}\label{eq:mu-beta}
\mu_{\gamma_\star}:=\gamma_\star(n_{\scriptscriptstyle\Sigma})\,\mu_{\scriptscriptstyle\Sigma,\scriptscriptstyle\partial}-\bigl(\vec\xi_{\gamma_\star}(n_{\scriptscriptstyle\Sigma})\cdot\mu_{\scriptscriptstyle\Sigma,\scriptscriptstyle\partial}\bigr)\,n_{\scriptscriptstyle\Sigma}
=\tau_{\scriptscriptstyle\Sigma}\times\vec\xi_{\gamma_\star}(n_{\scriptscriptstyle\Sigma}).
\end{align}
With the conormals \eqref{eq:mu-beta} of \(\Gamma\), \(\Gamma_{\rm in}\),
\(\Gamma_{\rm out}\) along \(\partial\Gamma\), the contact-line condition is the
anisotropic Young--Herring relation
\begin{align}\label{eq:aniso-young}
\bigl(\mu_\gamma+\mu_{\gamma_{\rm in}}+\mu_{\gamma_{\rm out}}\bigr)\cdot\mu_b=0
\qquad\text{on }\partial\Gamma,
\end{align}
equivalently
\(P_{T\Gamma_{\rm base}}(\mu_\gamma+\mu_{\gamma_{\rm in}}+\mu_{\gamma_{\rm out}})=0\),
since all three conormals are orthogonal to the contact line; in the isotropic
case it is \eqref{eq:young-energy}.  Summing \eqref{eq:aniso-area-var} over
\(\Gamma\), \(\Gamma_{\rm in}\), \(\Gamma_{\rm out}\) gives \(\delta W^\gamma(w)\),
and the weak formulation requires, for all
\(w\in\mathcal V_{\Gamma\cup\Gamma_{\rm base}}\) of \eqref{eq:space-W},
\begin{multline}\label{eq:aniso-W}
\int_\Gamma\varkappa_\gamma\,n\cdot w=\bigl\langle\nabla_s^{\widetilde G}\id,\nabla_s^{\widetilde G}w\bigr\rangle_{\Gamma,\gamma}\\+\bigl\langle\nabla_s^{\widetilde G_{\rm in}}\id,\nabla_s^{\widetilde G_{\rm in}}w\bigr\rangle_{\Gamma_{\rm in},\gamma_{\rm in}}+\bigl\langle\nabla_s^{\widetilde G_{\rm out}}\id,\nabla_s^{\widetilde G_{\rm out}}w\bigr\rangle_{\Gamma_{\rm out},\gamma_{\rm out}}.
\end{multline}

The free surface evolves by anisotropic surface diffusion
\begin{align}\label{eq:aniso-sd}
v\cdot n=\nabla_\Gamma\cdot\bigl(\beta(n)\nabla_\Gamma\varkappa_\gamma\bigr)\quad\text{on }\Gamma,\qquad
\beta(n)\,\mu_{\scriptscriptstyle\partial}\cdot\nabla_\Gamma\varkappa_\gamma=0\quad\text{on }\partial\Gamma,
\end{align}
where the mobility $\beta>0$ may depend on the orientation \cite{TaylorCahn1994,BGN2020handbook,GarckeKnopfNurnbergZhao2023}. For $\gamma(\vec p)=\gamma_0|\vec p|$ and constant $\beta$ we have $\varkappa_\gamma=\gamma_0H$, and \eqref{eq:aniso-sd} is \eqref{eq:sd}.

\begin{remark}
\label{rem:aniso-no-SP}
With anisotropic substrate densities, a single-patch formulation as in \eqref{eq:W1-equiv} or
\eqref{eq:W2-equiv} is in general no longer possible, because the density difference
\(\gamma_{\rm in}-\gamma_{\rm out}\) depends on the orientation and is not constant.
However, if \(\gamma_{\rm in}\) and \(\gamma_{\rm out}\) are both constant, then the
single-patch schemes can be applied to an anisotropic film: their energy
\(\int_{\Gamma_h^m}\gamma(n_h^m)+\sigma|\Gamma_{\star,h}^m|\) decreases, by
Lemma~\ref{lem:aniso-area}, below, and \eqref{eq:area-ineq-neg}.
If, in addition, \(\gamma_{\rm in}=\gamma_{\rm out}\), it suffices to mesh the
free surface (Remark~\ref{rem:equal-densities} and
Figure~\ref{fig:aniso-sine}).
\end{remark}

\subsection{Anisotropic full-substrate scheme}
\label{subsec:aniso-scheme}

We use the multiplier form of \S\ref{subsec:multiplier} with
nodal fields $N_h$ on $\Gamma_h^m$ and $M_h$ on $\Gamma_{{\rm base},h}^m$, as
in \eqref{eq:mixed-gen}.
Let $n_h^m$,
$n_{{\rm in},h}^m$, and $n_{{\rm out},h}^m$ be the elementwise unit normals of
the meshes $\Gamma_h^m$, $\Gamma_{{\rm in},h}^m$, and $\Gamma_{{\rm out},h}^m$.
They are used only to evaluate the densities and the mobility: in the weights
$\gamma_{\star,\ell}(n_\Sigma)$ of \eqref{eq:bilinear}, with
$\gamma_\star=\gamma,\gamma_{\rm in},\gamma_{\rm out}$ on the three meshes, in $\beta(n_h^m)$ in \eqref{eq:aniso-scheme-a}, and in the
discrete energy \eqref{eq:aniso-W-discrete} below, as required by
Lemma~\ref{lem:aniso-area}.  We set
\begin{align}\label{eq:aniso-Sh}
\mathcal A_h^\gamma(X^{m+1}_h,w_h):={}&
\bigl\langle\nabla_s^{\widetilde G}X_h^{m+1},\nabla_s^{\widetilde G}w_h\bigr\rangle_{\Gamma_h^m,\gamma}
+\bigl\langle\nabla_s^{\widetilde G_{\rm in}}X_h^{m+1},\nabla_s^{\widetilde G_{\rm in}}w_h\bigr\rangle_{\Gamma_{{\rm in},h}^m,\gamma_{\rm in}}\notag\\
&+\bigl\langle\nabla_s^{\widetilde G_{\rm out}}X_h^{m+1},\nabla_s^{\widetilde G_{\rm out}}w_h\bigr\rangle_{\Gamma_{{\rm out},h}^m,\gamma_{\rm out}} .
\end{align}
Find \((v_h^{m+1},\varkappa_{\gamma,h}^{m+1},\lambda_h^{m+1})\in\mathbb V_h^m\times S_h(\Gamma_h^m)\times\Lambda_h^m\)
such that, with $X_h^{m+1}=\id+\tau v_h^{m+1}$ and for all
$(\varphi_h,w_h,\psi_h)\in S_h(\Gamma_h^m)\times\mathbb V_h^m\times\Lambda_h^m$,
\begin{subequations}\label{eq:aniso-scheme}
\begin{align}
&\int_{\Gamma_h^m}^h(v_h^{m+1}\cdot N_h)\,\varphi_h
+\int_{\Gamma_h^m}\beta(n_h^m)\,\nabla_{\Gamma_h^m}\varkappa_{\gamma,h}^{m+1}\cdot\nabla_{\Gamma_h^m}\varphi_h=0, \label{eq:aniso-scheme-a}\\
&\int_{\Gamma_{{\rm base},h}^m}^h(v_h^{m+1}\cdot M_h)\,\psi_h=0, \label{eq:aniso-scheme-c}\\
&\int_{\Gamma_h^m}^h\varkappa_{\gamma,h}^{m+1}\,N_h\cdot w_h
+\int_{\Gamma_{{\rm base},h}^m}^h\lambda_h^{m+1}\,M_h\cdot w_h
=\mathcal A_h^\gamma(X^{m+1}_h,w_h). \label{eq:aniso-scheme-b}
\end{align}
\end{subequations}
This is \eqref{eq:mixed-gen} with $\gamma_0H_h^{m+1}$ replaced by
$\varkappa_{\gamma,h}^{m+1}$, $\beta$ by $\beta(n_h^m)$, and $\mathcal A_h$ by $\mathcal A_h^\gamma$.
Unlike the mesh normals in $\mathcal A_h^\gamma$, the field $M_h$ only enters
the attachment condition and can be freely chosen.  As in
Section~\ref{sec:iterative}, we take $N_h=\hat n_h^m$ and the attachment field
$M_h=M_h^{(k)}$ of Algorithm~\ref{alg:picard}, with \eqref{eq:aniso-scheme} in
every solve.  The discrete energy is
\begin{align}\label{eq:aniso-W-discrete}
W^{\gamma,m}_h:=\int_{\Gamma_h^m}\gamma(n_h^m)+\int_{\Gamma_{{\rm in},h}^m}\gamma_{\rm in}(n_{{\rm in},h}^m)+\int_{\Gamma_{{\rm out},h}^m}\gamma_{\rm out}(n_{{\rm out},h}^m).
\end{align}

\subsection{Energy stability}
\label{subsec:aniso-energy-stable}

\begin{lemma}[{Area inequality \cite[Theorem~3.2]{BGN2008aniso}}]
\label{lem:aniso-area}
Let \(\Sigma_h\) be a polyhedral surface, let \(\gamma_\star\) be of the form
\eqref{eq:aniso-gamma}, and let
\(|\Sigma_h|_{\gamma_\star}:=\int_{\Sigma_h}\gamma_\star(n_{\Sigma_h})\), with
\(|\vec X(\Sigma_h)|_{\gamma_\star}\) defined with the normal of
\(\vec X(\Sigma_h)\).  Then for every \(\vec X\in(S_h(\Sigma_h))^3\),
\begin{align}\label{eq:aniso-area-ineq}
\bigl\langle\nabla_s^{\widetilde G_{\gamma_\star}}\vec X,\nabla_s^{\widetilde G_{\gamma_\star}}(\vec X-\id)\bigr\rangle_{\Sigma_h,\gamma_\star}\ge|\vec X(\Sigma_h)|_{\gamma_\star}-|\Sigma_h|_{\gamma_\star}.
\end{align}
\end{lemma}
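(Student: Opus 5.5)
The inequality is additive over elements, because both sides are sums of integrals over the triangles of \(\Sigma_h\). On each triangle \(\nabla_s\vec X\) is constant, and so are the normals \(n_{\Sigma_h}\) and \(n_{\vec X(\Sigma_h)}\). The bilinear form \eqref{eq:bilinear} is also a sum over \(\ell\) of terms weighted by \(\gamma_{\star,\ell}(n_{\Sigma_h})\), and \(|\cdot|_{\gamma_\star}=\sum_\ell|\cdot|_{\gamma_{\star,\ell}}\). It therefore suffices to prove, for a single triangle \(K\) and a single \(\ell\), that
\[
|K|\,\gamma_\ell(n_K)\sum_{i=1}^2\bigl(\partial_{t_i}\vec X,\partial_{t_i}(\vec X-\id)\bigr)_{\widetilde{\underline G}_\ell}\ge \bigl|\vec X(K)\bigr|\,\gamma_\ell(n_{\vec X(K)}),
\]
where \(\{t_1,t_2\}\) is a \(\widetilde{\underline G}_\ell\)-orthonormal tangent basis of \(K\) and \(\partial_{t_i}\id=t_i\).

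The key step is to reduce to the isotropic case by a linear change of variables. Set \(B:=\widetilde{\underline G}_\ell^{1/2}\), which is symmetric positive definite. In \(\widetilde{\underline G}_\ell\)-inner products, \(\widetilde{\underline G}_\ell\)-areas of parallelograms correspond to Euclidean areas after applying \(B\): the area of \(B(a\wedge b)\) equals \(|{\rm cof}(B)(a\times b)|=\det B\,|B^{-1}(a\times b)|\). With \(\widetilde{\underline G}_\ell=[\det\underline G_\ell]^{1/2}\underline G_\ell^{-1}\), this gives
\[
\det B\,|B^{-1}p|=\bigl[\det \widetilde{\underline G}_\ell\bigr]^{1/2}\bigl(p\cdot\widetilde{\underline G}_\ell^{-1}p\bigr)^{1/2}.
\]
Since \(\det\widetilde{\underline G}_\ell=\det\underline G_\ell^{1/2}\) and \(\widetilde{\underline G}_\ell^{-1}=[\det\underline G_\ell]^{-1/2}\underline G_\ell\), the right-hand side equals \((p\cdot\underline G_\ell p)^{1/2}=\gamma_\ell(p)\) for a unit vector \(p\). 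This is the reason for the normalization in \(\widetilde{\underline G}_\ell\). Consequently \(\gamma_\ell(n_K)|K|\) is the Euclidean area of \(BK\), and similarly \(\gamma_\ell(n_{\vec X(K)})|\vec X(K)|=|B\vec X(K)|\).

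Next, write \(u_i:=B\,\partial_{t_i}\vec X\). Because \(Bt_1,Bt_2\) are Euclidean orthonormal, the \(u_i\) are the derivatives of the affine map \(B\vec X B^{-1}\) from \(BK\) onto \(B\vec X(K)\) along the orthonormal frame \(Bt_1,Bt_2\). Then \(|B\vec X(K)|=|BK|\,|u_1\times u_2|\), and the left-hand side becomes
\[
|BK|\sum_i\bigl(|u_i|^2-u_i\cdot Bt_i\bigr).
\]
This is exactly the isotropic element inequality \eqref{eq:area-ineq-neg} on \(BK\).

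It remains to verify the elementary isotropic bound \(\sum_i(|u_i|^2-u_i\cdot e_i)\ge|u_1\times u_2|-1\) for an orthonormal pair \(e_1,e_2\). Use \(|u_i|^2-u_i\cdot e_i\ge\tfrac12|u_i|^2-\tfrac12\), which follows from \(\tfrac12|u_i-e_i|^2\ge0\). Combining this with \(\tfrac12(|u_1|^2+|u_2|^2)\ge|u_1||u_2|\ge|u_1\times u_2|\) gives the bound.

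I expect the main obstacle to be the bookkeeping in the change of variables. One must check that the anisotropic gradient defined through a \(\widetilde{\underline G}_\ell\)-orthonormal frame maps consistently to the Euclidean gradient on \(BK\). One must also confirm the cofactor identity that turns the weight \(\gamma_\ell(n)\) into the Jacobian of \(B\) restricted to the plane. Finally, summing over \(\ell\) and over the elements yields \eqref{eq:aniso-area-ineq}.
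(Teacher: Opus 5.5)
Your argument is correct and is essentially the proof behind \cite[Theorem~3.2]{BGN2008aniso}, which the paper cites without reproducing: for each $\ell$, the normalization of $\widetilde{\underline G}_\ell$ makes $\gamma_\ell(n)$ times Euclidean area equal to $\widetilde{\underline G}_\ell$-area, so the estimate reduces elementwise to the isotropic bound $\tfrac12(|u_1|^2+|u_2|^2)-1\ge|u_1\times u_2|-1$ (your map $B=\widetilde{\underline G}_\ell^{1/2}$ just makes this explicit). One slip to fix: the right-hand side of your displayed single-element inequality must be $|\vec X(K)|\,\gamma_\ell(n_{\vec X(K)})-|K|\,\gamma_\ell(n_K)$, which is what your later computation $|BK|\,(|u_1\times u_2|-1)$ actually delivers.
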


\begin{theorem}[Anisotropic energy stability]
\label{thm:aniso-energy}
Let $N_h$ and $M_h$ satisfy the hypotheses of
Lemma~\ref{lem:universal}.  Then the scheme \eqref{eq:aniso-scheme} has a
unique solution, and it satisfies
\[
W^{\gamma,m+1}_h+\tau\|\beta(n_h^m)^{1/2}\nabla_{\Gamma_h^m}\varkappa_{\gamma,h}^{m+1}\|_{L^2(\Gamma_h^m)}^2\le W^{\gamma,m}_h.
\]
If \eqref{eq:aniso-scheme} is used in every solve of
Algorithm~\ref{alg:picard}, this holds for every Picard iterate, and a fixed
point keeps the substrate vertices exactly on the substrate.
\end{theorem}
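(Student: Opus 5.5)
The proof follows the isotropic template of Lemma~\ref{lem:universal}, with the stiffness form \(\mathcal A_h\) replaced by \(\mathcal A_h^\gamma\) and the area inequality \eqref{eq:area-ineq-neg} replaced by Lemma~\ref{lem:aniso-area}.

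\textbf{Unique solvability.} Since trial and test spaces have equal dimension, it suffices to show that the homogeneous system has only the trivial solution. Take \(w_h=v_h^{m+1}\), \(\varphi_h=-\varkappa_{\gamma,h}^{m+1}\) and \(\psi_h=-\lambda_h^{m+1}\) in the homogeneous system; the multiplier terms cancel. This gives
\[
\int_{\Gamma_h^m}\beta(n_h^m)|\nabla_{\Gamma_h^m}\varkappa_{\gamma,h}^{m+1}|^2+\tau\,\mathcal A_h^\gamma(v_h^{m+1},v_h^{m+1})=0 .
\]
Each summand of \eqref{eq:bilinear} with \(\vec\chi=w\) is \(\sum_i|\partial_{t_i}w|_{\widetilde{\underline G}_\ell}^2\,\gamma_{\star,\ell}(n)\ge0\). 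Moreover, \(\widetilde{\underline G}_\ell\) is positive definite and \(\gamma_{\star,\ell}(n)>0\) for a unit normal, so this form vanishes only if \(w\) is elementwise constant. Hence \(\varkappa_{\gamma,h}^{m+1}\) is constant (as \(\beta>0\)), and \(v_h^{m+1}\) is constant on the connected glued mesh. The rest of the argument of Theorem~\ref{thm:wellposed-neg} and Proposition~\ref{prop:equivalence} goes through unchanged, with \(N_h\) and \(M_h\) in place of \(\hat n_h^m\) and \(\bar n_{{\rm base},h}^m\):
\begin{itemize}
\item lumped testing gives \(v_h^{m+1}(q)\cdot N_h(q)=0\) at all vertices, so \(v_h^{m+1}=0\) by the spanning hypothesis;
\item the vertex-by-vertex test directions \(\varphi_q N_h(q)\), and at contact-line vertices the projection \(a_q\) of \(N_h(q)\) orthogonal to \(M_h(q)\), give \(\varkappa_{\gamma,h}^{m+1}=0\);
\item the functionals \(w_h\mapsto w_h(q)\cdot M_h(q)\) are linearly independent since \(M_h(q)\ne0\), so \(\lambda_h^{m+1}=0\).
\end{itemize}

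\textbf{Energy estimate.} Take \(w_h=v_h^{m+1}\) in \eqref{eq:aniso-scheme-b}, \(\varphi_h=\varkappa_{\gamma,h}^{m+1}\) in \eqref{eq:aniso-scheme-a} and \(\psi_h=\lambda_h^{m+1}\) in \eqref{eq:aniso-scheme-c}. The multiplier term drops out, and the curvature term becomes \(-\|\beta(n_h^m)^{1/2}\nabla_{\Gamma_h^m}\varkappa_{\gamma,h}^{m+1}\|^2\), so
\[
\mathcal A_h^\gamma(X_h^{m+1},v_h^{m+1})=-\|\beta(n_h^m)^{1/2}\nabla_{\Gamma_h^m}\varkappa_{\gamma,h}^{m+1}\|_{L^2(\Gamma_h^m)}^2 .
\]
Multiply by \(\tau\) and use \(\tau v_h^{m+1}=X_h^{m+1}-\id\). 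Each of the three terms of \(\mathcal A_h^\gamma\) then has exactly the form of the left side of \eqref{eq:aniso-area-ineq}, with \(\Sigma_h\) equal to \(\Gamma_h^m\), \(\Gamma_{{\rm in},h}^m\) or \(\Gamma_{{\rm out},h}^m\), the corresponding density, and \(\vec X=X_h^{m+1}|_{\Sigma_h}\in(S_h(\Sigma_h))^3\). This is where the weights must be evaluated at the mesh normals \(n_{\star,h}^m\), as in \eqref{eq:aniso-W-discrete}. Summing the three inequalities of Lemma~\ref{lem:aniso-area} bounds \(W_h^{\gamma,m+1}-W_h^{\gamma,m}\) from above by \(\tau\mathcal A_h^\gamma(X_h^{m+1},v_h^{m+1})\), which is the claimed inequality. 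Because the restrictions of a globally continuous \(X_h^{m+1}\) are used on each patch, no compatibility issue arises at the contact line.

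\textbf{Picard iterates and attachment.} For each Picard iterate, the frozen \(M_h^{(k)}\) again gives an instance of \eqref{eq:aniso-scheme}, so the same estimate holds with the trial move \(\id+\tau v_h^{(k+1)}\). At a fixed point, \(M_h(q)=\mathcal M_q(v_h^{m+1}(q))\). Together with \eqref{eq:discrete-chain-rule} and the constraint at active vertices (and \(v_h^{m+1}=0\) at pinned vertices), this gives \(\phi(X_h^{m+1}(q))=\phi(q)\), exactly as in Theorem~\ref{theorem:exact}.

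The only step needing care is the positivity argument for \(\mathcal A_h^\gamma(v,v)\) in the solvability proof. The energy step is immediate once the three terms are matched to Lemma~\ref{lem:aniso-area}.
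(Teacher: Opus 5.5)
Your proposal is correct and is essentially the paper's proof. It has the same three parts: unique solvability by reduction to Lemma~\ref{lem:universal}, using that the forms \eqref{eq:bilinear} are nonnegative and vanish only on constants on the connected glued mesh; the energy bound by testing with $(\varphi_h,w_h,\psi_h)=(\varkappa_{\gamma,h}^{m+1},v_h^{m+1},\lambda_h^{m+1})$ and applying Lemma~\ref{lem:aniso-area} on each of the three meshes; and the Picard/attachment claim as in Theorem~\ref{thm:iterative}. You simply write out the nodal-testing details of the solvability step, which the paper leaves implicit.
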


\begin{proof}
Unique solvability follows as in Lemma~\ref{lem:universal}.
Its proof needs only that the three stiffness forms are nonnegative and vanish
exactly on the fields that are constant on each connected component, and the
forms \eqref{eq:bilinear} have both properties.
Testing \eqref{eq:aniso-scheme-a} with $\varphi_h=\varkappa_{\gamma,h}^{m+1}$,
\eqref{eq:aniso-scheme-c} with $\psi_h=\lambda_h^{m+1}$, and
\eqref{eq:aniso-scheme-b} with $w_h=v_h^{m+1}$ gives, over
the surfaces $\Sigma\in\{\Gamma_h^m,\Gamma_{{\rm in},h}^m,\Gamma_{{\rm out},h}^m\}$
{with the densities}
$\gamma_\star\in\{\gamma,\gamma_{\rm in},\gamma_{\rm out}\}$,
\[
\sum_{\Sigma}\tfrac1\tau\bigl\langle\nabla_s^{\widetilde G_{\gamma_\star}}X_h^{m+1},
\nabla_s^{\widetilde G_{\gamma_\star}}(X_h^{m+1}-\id)\bigr\rangle_{\Sigma,\gamma_\star}
=-\int_{\Gamma_h^m}\beta(n_h^m)\,|\nabla_{\Gamma_h^m}\varkappa_{\gamma,h}^{m+1}|^2 ,
\]
and applying Lemma~\ref{lem:aniso-area} on each $\Sigma$ then yields
the stated inequality.  The last statement
follows as in Theorem~\ref{thm:iterative}.
\end{proof}

{
\section{Extension to other tangential motions: dual MDR}
\label{sec:dual-mdr}
In the schemes of Sections~\ref{sec:schemes}--\ref{sec:anisotropic} the
vertices move along the free surface by the original BGN tangential motion, and along the substrate patches by the level-set BGN motion of
\S\ref{subsec:single-patch}.  To see what determines this motion,
let
\begin{equation}\label{eq:Th-space}
\begin{aligned}
\mathcal T_h^m:=\bigl\{\eta_h\in\mathbb V_h^m:\ &\eta_h(q)\cdot N_h(q)=0\
\text{at every vertex $q$ of }\Gamma_h^m,\\
&\eta_h(q)\cdot M_h(q)=0\ \text{at every active substrate vertex $q$}\bigr\}
\end{aligned}
\end{equation}
be the velocities that are tangent to the surfaces at the vertices.  Testing
\eqref{eq:mixed-gen-b} with $\eta_h\in\mathcal T_h^m$, and noting that $X^{m+1}_h = \id+\tau v_h^{m+1}$, gives
\begin{equation}\label{eq:lsbgn-tangential}
\mathcal A_h(v_h^{m+1},\eta_h)=-\tau^{-1}\mathcal A_h(\id,\eta_h)
\qquad\forall\eta_h\in\mathcal T_h^m .
\end{equation}
Since $\mathcal A_h(\id,\eta_h)\neq0$ in general, the tangential velocity is of
order $1/\tau$, and for small $\tau$ the substrate vertices may cluster on a
curved substrate in three dimensions (\S\ref{subsec:num-dual}).
Following \cite{GaoLiTang2026dual,HuLi2022MDR,GaoGarckeLiTang2025bgnmdr}, we
replace \eqref{eq:lsbgn-tangential} by the MDR tangential motion
\begin{align}\label{eq:dual-tangential}
\mathcal D_h(v_h^{m+1},\eta_h)=0\quad\forall\eta_h\in\mathcal T_h^m ,\qquad
\mathcal D_h(u_h,w_h):=\sum_{i=0}^{2}\int_{\Sigma_i^m}\nabla_{\Sigma_i^m}u_h\cdot
\nabla_{\Sigma_i^m}w_h ,
\end{align}
with $(\Sigma_0^m,\Sigma_1^m,\Sigma_2^m):=(\Gamma_h^m,\Gamma_{{\rm in},h}^m,
\Gamma_{{\rm out},h}^m)$.  Here $\mathcal D_h$ is the sum of the
deformation-rate forms of the three surfaces, each with weight $1$.  Unlike
\eqref{eq:lsbgn-tangential}, condition \eqref{eq:dual-tangential} has no
$1/\tau$ term.
\subsection{Continuous formulation}
\label{subsec:dual-scheme}

Let
$(\Sigma_0,\Sigma_1,\Sigma_2):=(\Gamma,\Gamma_{\rm in},\Gamma_{\rm out})$,
$(\gamma_1,\gamma_2):=(\gamma_{\rm in},\gamma_{\rm out})$, where
$\gamma_0,\gamma_1,\gamma_2>0$ are constants, and
\begin{align}\label{dual-tang-cont}
\begin{aligned}
&\mathcal A(u,w):=\sum_{i=0}^{2}\gamma_i\int_{\Sigma_i}\nabla_{\Sigma_i}u\cdot
\nabla_{\Sigma_i}w,\qquad
\mathcal D(u,w):=\sum_{i=0}^{2}\int_{\Sigma_i}\nabla_{\Sigma_i}u\cdot
\nabla_{\Sigma_i}w,\\
&\mathcal T:=\{\eta\in\mathcal V_{\Gamma\cup\Gamma_{\rm base}}:\ \eta\cdot n=0\
\text{on }\Gamma\},
\end{aligned}
\end{align}
with $\mathcal V_{\Gamma\cup\Gamma_{\rm base}}$ from \eqref{eq:space-W}.
Fields in $\mathcal T$ are tangent to the contact line on $\partial\Gamma$, provided $\Gamma$ meets $\Gamma_{\rm base}$ transversally, that is, $n$ and $n_b$ are linearly independent on $\partial\Gamma$.
We seek $(v,H,z)\in\mathcal V_{\Gamma\cup\Gamma_{\rm base}}\times H^1(\Gamma)
\times\mathcal T$ such that, for all
$(w,\varphi,\eta)\in\mathcal V_{\Gamma\cup\Gamma_{\rm base}}\times H^1(\Gamma)
\times\mathcal T$,
\begin{subequations}\label{eq:dual-cont}
\begin{align}
&\gamma_0\int_\Gamma Hn\cdot w=\mathcal A(\id,w)-\mathcal D(z,w),
\label{eq:dual-cont-force}\\
&\int_\Gamma(v\cdot n)\,\varphi+\beta\int_\Gamma\nabla_\Gamma(\gamma_0H)\cdot\nabla_\Gamma\varphi=0,
\label{eq:dual-cont-flow}\\
&\mathcal D(v,\eta)=0.
\label{eq:dual-cont-mdr}
\end{align}
\end{subequations}
Here \eqref{eq:dual-cont-flow} is the weak form of \eqref{eq:sd} with the
no-flux condition \eqref{eq:bc-flux}, and the attachment condition
\eqref{eq:bc-attach} is built into $\mathcal V_{\Gamma\cup\Gamma_{\rm base}}$.
As in \cite{GaoLiTang2026dual}, \eqref{eq:dual-cont-mdr} fixes the
tangential velocity: it is the optimality condition of the constrained
minimization of the isotropic deformation-rate energy
\[
\min\Bigl\{\tfrac12\mathcal D(u,u):\
u\in\mathcal V_{\Gamma\cup\Gamma_{\rm base}},\ u\cdot n=v\cdot n\ \text{on }\Gamma
\Bigr\},
\]
in which $v\cdot n$ is given by \eqref{eq:dual-cont-flow} and the substrate constraint is
built into $\mathcal V_{\Gamma\cup\Gamma_{\rm base}}$.  For
$w\in\mathcal T$ the left-hand side of \eqref{eq:dual-cont-force} vanishes.  Integration by parts on
each $\Sigma_i$ leaves only normal fields inside the patches and the conormals
$\mu_{\scriptscriptstyle\partial}$, $\mu_b$ on $\partial\Gamma$.  All are
orthogonal to $w \in \mathcal T$, so $\mathcal A(\id,w)=0$.  Hence
$\mathcal D(z,w)=0$ for all $w\in\mathcal T$, and $z=0$ since $\mathcal D$
is positive definite on $\mathcal T$.  So \eqref{eq:dual-cont-force}
reduces to \eqref{eq:var-W}, which makes $H$ the mean curvature and gives
Young's law.  The argument only uses that $\mathcal D$ is symmetric and
positive definite on $\mathcal T$, so any such form, for instance the
$\gamma$-weighted form $\mathcal A$, may replace it; only the tangential
velocity changes.

\subsection{The discrete dual-MDR full-substrate scheme}
We use the isotropic form $\mathcal D_h$ of \eqref{eq:dual-tangential}, the
discrete counterpart of $\mathcal D$ in \eqref{dual-tang-cont}.  It is
positive definite on $\mathcal T_h^m$ if the values of $N_h$ at the vertices of
$\Gamma_h^m$ span $\mathbb R^3$: if $\mathcal D_h(u_h,u_h)=0$ for
$u_h\in\mathcal T_h^m$, then $u_h$ is a constant $c$ on the connected glued union
$\Gamma_h^m\cup\Gamma_{{\rm base},h}^m$, and $c\cdot N_h(q)=0$ at every vertex
$q$ of $\Gamma_h^m$, so $c=0$.
Given $\Gamma_h^m$ and the
fields $N_h$, $M_h$, find
\[
(v_h^{m+1},H_h^{m+1},\lambda_h^{m+1},z_h^{m+1},\kappa_h^{m+1},\rho_h^{m+1})\in
\mathbb V_h^m\times S_h(\Gamma_h^m)\times\Lambda_h^m\times
\mathbb V_h^m\times S_h(\Gamma_h^m)\times\Lambda_h^m ,
\]
with $X_h^{m+1}=\id+\tau v_h^{m+1}$, such that, for all
\[
(w_h,\varphi_h,\psi_h,\eta_h,\chi_h,\zeta_h)\in
\mathbb V_h^m\times S_h(\Gamma_h^m)\times\Lambda_h^m\times
\mathbb V_h^m\times S_h(\Gamma_h^m)\times\Lambda_h^m ,
\]
\begin{subequations}\label{eq:dual}
\begin{align}
&\gamma_0\int_{\Gamma_h^m}^hH_h^{m+1}\,N_h\cdot w_h
+\int_{\Gamma_{{\rm base},h}^m}^h\lambda_h^{m+1}\,M_h\cdot w_h
\notag\\
&\qquad=\mathcal A_h(X_h^{m+1},w_h)-\mathcal D_h(z_h^{m+1},w_h),
\label{eq:dual-force}\\
&\int_{\Gamma_h^m}^h(v_h^{m+1}\cdot N_h)\,\varphi_h
+\beta\int_{\Gamma_h^m}\nabla_{\Gamma_h^m}(\gamma_0H_h^{m+1})\cdot\nabla_{\Gamma_h^m}\varphi_h=0,
\label{eq:dual-flow}\\
&\int_{\Gamma_{{\rm base},h}^m}^h(v_h^{m+1}\cdot M_h)\,\psi_h=0,
\label{eq:dual-constr}\\
&\mathcal D_h(v_h^{m+1},\eta_h)
=\int_{\Gamma_h^m}^h\kappa_h^{m+1}\,N_h\cdot\eta_h
+\int_{\Gamma_{{\rm base},h}^m}^h\rho_h^{m+1}\,M_h\cdot\eta_h,
\label{eq:dual-mdr}\\
&\int_{\Gamma_h^m}^h(z_h^{m+1}\cdot N_h)\,\chi_h=0,\qquad
\int_{\Gamma_{{\rm base},h}^m}^h(z_h^{m+1}\cdot M_h)\,\zeta_h=0.
\label{eq:dual-z}
\end{align}
\end{subequations}
Scheme \eqref{eq:dual} is the discrete counterpart of \eqref{eq:dual-cont},
written in multiplier form as in \S\ref{subsec:multiplier}:
\eqref{eq:dual-force}, \eqref{eq:dual-flow} and \eqref{eq:dual-mdr} correspond
to \eqref{eq:dual-cont-force}, \eqref{eq:dual-cont-flow} and
\eqref{eq:dual-cont-mdr}.

The multiplier
$\lambda_h^{m+1}$ and \eqref{eq:dual-constr} give $v_h^{m+1}(q)\cdot M_h(q)=0$
at every active substrate vertex $q$, the discrete form of the attachment
condition built into $\mathcal V_{\Gamma\cup\Gamma_{\rm base}}$.  The
multipliers $\kappa_h^{m+1}$, $\rho_h^{m+1}$ and \eqref{eq:dual-z} give
$z_h^{m+1}\in\mathcal T_h^m$.
The scheme is defined for any admissible $N_h$ and $M_h$.  We choose them as for the level-set BGN motion, $N_h=\hat n_h^m$ and $M_h=M_h^{(k)}$ in every solve of Algorithm~\ref{alg:picard}, so that both schemes use the same normal and attachment fields.

\begin{theorem}[Dual-MDR full-substrate scheme]
\label{thm:dual}
Assume the hypotheses of Lemma~\ref{lem:universal}.  Then:
\begin{enumerate}[label={\rm(\roman*)}]
\item the scheme \eqref{eq:dual} has a unique solution;
\item the solution satisfies
$W_h^{m+1}+\tau\beta\|\nabla_{\Gamma_h^m}(\gamma_0H_h^{m+1})\|_{L^2(\Gamma_h^m)}^2\le
W_h^m$;
\item if Algorithm~\ref{alg:picard} uses \eqref{eq:dual} in every solve,
including the initial guess, with the same frozen field $M_h^{(k)}$ in all
pairings and with $N_h$ and $M_h^{(k)}$ satisfying these
hypotheses, then every iterate satisfies (ii), and a fixed point satisfies
$\phi(X_h^{m+1}(q))=\phi(q)$ at every substrate vertex.
\end{enumerate}
\end{theorem}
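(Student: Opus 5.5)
For (i), the plan is to show that the homogeneous system has only the trivial solution. The trial and test spaces coincide and are finite dimensional, so injectivity is enough. In the homogeneous system the term \(\mathcal A_h(\id,w_h)\) is dropped and \(\mathcal A_h(X_h^{m+1},w_h)\) becomes \(\tau\mathcal A_h(v_h^{m+1},w_h)\). I would test as follows:
\begin{itemize}
\item \eqref{eq:dual-force} with \(w_h=v_h^{m+1}\);
\item \eqref{eq:dual-flow} with \(\varphi_h=-\gamma_0H_h^{m+1}\);
\item \eqref{eq:dual-constr} with \(\psi_h=\lambda_h^{m+1}\);
\item \eqref{eq:dual-mdr} with \(\eta_h=z_h^{m+1}\);
\item \eqref{eq:dual-z} with \(\chi_h=\kappa_h^{m+1}\), \(\zeta_h=\rho_h^{m+1}\).
\end{itemize}
The constraint \eqref{eq:dual-constr} kills the \(\lambda\)-term. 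Equation \eqref{eq:dual-z} makes the right-hand side of \eqref{eq:dual-mdr} vanish, so \(\mathcal D_h(v_h^{m+1},z_h^{m+1})=0\). By symmetry of \(\mathcal D_h\), the coupling term \(\mathcal D_h(z_h^{m+1},v_h^{m+1})\) in \eqref{eq:dual-force} therefore drops out. What remains is
\[
\beta\|\nabla_{\Gamma_h^m}(\gamma_0H_h^{m+1})\|^2+\tau\mathcal A_h(v_h^{m+1},v_h^{m+1})=0 .
\]
As in Theorem~\ref{thm:wellposed-neg} and Lemma~\ref{lem:universal}, this yields \(H_h^{m+1}\) constant and \(v_h^{m+1}\) constant. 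Since \(v_h^{m+1}\cdot N_h=0\) at all vertices of \(\Gamma_h^m\) and the \(N_h\) span \(\mathbb R^3\), we get \(v_h^{m+1}=0\).

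Next I would recover the remaining unknowns. With \(v_h^{m+1}=0\), equation \eqref{eq:dual-mdr} says that the functional \(\eta_h\mapsto\int^h\kappa_h^{m+1} N_h\cdot\eta_h+\int^h\rho_h^{m+1} M_h\cdot\eta_h\) vanishes on all of \(\mathbb V_h^m\). Testing with nodal basis functions times suitable vectors gives \(\kappa_h^{m+1}=\rho_h^{m+1}=0\). Away from the contact line this needs \(N_h(q)\ne0\) and \(M_h(q)\ne0\). At a contact-line vertex the two terms combine to \(\omega\kappa N_h+\omega'\rho M_h=0\), and linear independence of \(N_h(q)\) and \(M_h(q)\) forces both coefficients to vanish.

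Then \(z_h^{m+1}\in\mathcal T_h^m\) by \eqref{eq:dual-z}. Testing \eqref{eq:dual-force} with \(w_h=\eta_h\in\mathcal T_h^m\) makes the left-hand side vanish, because the nodal orthogonalities hold under mass lumping. With \(v_h^{m+1}=0\) this gives \(\mathcal D_h(z_h^{m+1},\eta_h)=0\) for all \(\eta_h\in\mathcal T_h^m\). Positive definiteness of \(\mathcal D_h\) on \(\mathcal T_h^m\), shown just before the statement, then gives \(z_h^{m+1}=0\).

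Finally \eqref{eq:dual-force} reduces to \(\gamma_0\int^hH N_h\cdot w_h+\int^h\lambda M_h\cdot w_h=0\) for all \(w_h\in\mathbb V_h^m\). The same nodal argument, including the contact-line linear independence, gives \(H_h^{m+1}=0\) and \(\lambda_h^{m+1}=0\). This completes (i). The main care point is the contact-line vertices, where both multiplier families appear simultaneously and the linear-independence hypothesis does the work.

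For (ii), I would repeat the same testing on the inhomogeneous system:
\begin{itemize}
\item \(w_h=v_h^{m+1}\) in \eqref{eq:dual-force};
\item \(\varphi_h=\gamma_0H_h^{m+1}\) in \eqref{eq:dual-flow};
\item \(\psi_h=\lambda_h^{m+1}\) in \eqref{eq:dual-constr};
\item \(\eta_h=z_h^{m+1}\) in \eqref{eq:dual-mdr} together with \eqref{eq:dual-z}, which gives \(\mathcal D_h(z_h^{m+1},v_h^{m+1})=0\).
\end{itemize}
This yields \(\mathcal A_h(X_h^{m+1},v_h^{m+1})=-\beta\|\nabla_{\Gamma_h^m}(\gamma_0H_h^{m+1})\|^2\). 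The key point is that the MDR coupling disappears exactly, and this is where the symmetry of \(\mathcal D_h\) and the same \(N_h\), \(M_h\) in all pairings are essential. Multiplying by \(\tau\) and applying \eqref{eq:area-ineq-neg} on the three meshes, as in Lemma~\ref{lem:universal}, gives the bound.

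For (iii), each Picard solve is an instance of \eqref{eq:dual} with frozen admissible \(M_h^{(k)}\), so (ii) applies to every iterate. At a fixed point we have \(M_h(q)=\mathcal M_q(v_h^{m+1}(q))\). Together with \(v_h^{m+1}(q)\cdot M_h(q)=0\) and \eqref{eq:discrete-chain-rule}, this gives \(\phi(X_h^{m+1}(q))=\phi(q)\) at active vertices; at pinned vertices it holds because \(v_h^{m+1}=0\) there. This is exactly as in Theorem~\ref{theorem:exact}.
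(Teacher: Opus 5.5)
Your proposal is correct and follows essentially the same route as the paper: the same test functions cancel the multiplier and MDR coupling terms, the area inequality \eqref{eq:area-ineq-neg} gives (ii), and (iii) reduces to the argument of Theorem~\ref{theorem:exact}. The only minor variation is in (i), where you get \(v_h^{m+1}=0\) from \(\mathcal A_h(v_h^{m+1},v_h^{m+1})=0\), connectivity and the spanning of \(N_h\), as in Theorem~\ref{thm:wellposed-neg}; the paper instead first deduces \(v_h^{m+1}\in\mathcal T_h^m\), then tests \eqref{eq:dual-mdr} with \(v_h^{m+1}\) and uses positive definiteness of \(\mathcal D_h\) on \(\mathcal T_h^m\). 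Both routes are valid.
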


\begin{proof}
\emph{(i)}  It
suffices to show that the homogeneous system, in which
$\mathcal A_h(X_h^{m+1},w_h)$ is replaced by $\tau\mathcal A_h(v_h^{m+1},w_h)$,
has only the trivial solution.  We test it with
\[
w_h=v_h^{m+1},\ \ \varphi_h=\gamma_0H_h^{m+1},\ \ \psi_h=\lambda_h^{m+1},\ \
\eta_h=z_h^{m+1},\ \ \chi_h=\kappa_h^{m+1},\ \ \zeta_h=\rho_h^{m+1} .
\]
The multiplier terms cancel, and \eqref{eq:dual-mdr} with \eqref{eq:dual-z}
gives $\mathcal D_h(z_h^{m+1},v_h^{m+1})=0$.  Hence
\[
\tau\mathcal A_h(v_h^{m+1},v_h^{m+1})
+\beta\|\nabla_{\Gamma_h^m}(\gamma_0H_h^{m+1})\|_{L^2(\Gamma_h^m)}^2=0 .
\]
Both terms are nonnegative, so $H_h^{m+1}$ is constant on
{the connected surface} $\Gamma_h^m$, and \eqref{eq:dual-flow}, \eqref{eq:dual-constr} give
$v_h^{m+1}\in\mathcal T_h^m$.  Testing \eqref{eq:dual-mdr} with $v_h^{m+1}$
gives $\mathcal D_h(v_h^{m+1},v_h^{m+1})=0$, so $v_h^{m+1}=0$ because
$\mathcal D_h$ is positive definite on $\mathcal T_h^m$.  By \eqref{eq:dual-z},
$z_h^{m+1}\in\mathcal T_h^m$, and testing the homogeneous \eqref{eq:dual-force} with
$z_h^{m+1}$ gives $\mathcal D_h(z_h^{m+1},z_h^{m+1})=0$, so $z_h^{m+1}=0$.

Testing the remaining equations with nodal basis functions as in the proof of Theorem~\ref{thm:wellposed-neg} gives nodal identities with positive lumped weights.  Since
$N_h(q)\ne0$, $M_h(q)\ne0$, and $N_h(q)$, $M_h(q)$ are linearly independent at
every contact-line vertex, they give
$H_h^{m+1}=\lambda_h^{m+1}=\kappa_h^{m+1}=\rho_h^{m+1}=0$.

\emph{(ii)}  Test \eqref{eq:dual-force} with $w_h=v_h^{m+1}$,
\eqref{eq:dual-flow} with $\varphi_h=\gamma_0H_h^{m+1}$, \eqref{eq:dual-constr}
with $\psi_h=\lambda_h^{m+1}$, and \eqref{eq:dual-mdr} with $\eta_h=z_h^{m+1}$.  By
\eqref{eq:dual-constr} the multiplier term vanishes, by \eqref{eq:dual-mdr}, \eqref{eq:dual-z} and the symmetry of $\mathcal D_h$ the dual term vanishes, and
$\mathcal A_h(X_h^{m+1},v_h^{m+1})
=-\beta\|\nabla_{\Gamma_h^m}(\gamma_0H_h^{m+1})\|_{L^2(\Gamma_h^m)}^2$.  Since
$X_h^{m+1}-\id=\tau v_h^{m+1}$, applying the area inequality \eqref{eq:area-ineq-neg} on each patch gives
\[
W_h^{m+1}-W_h^m\le\mathcal A_h(X_h^{m+1},X_h^{m+1}-\id)
=-\tau\beta\|\nabla_{\Gamma_h^m}(\gamma_0H_h^{m+1})\|_{L^2(\Gamma_h^m)}^2 .
\]

\emph{(iii)}  The proof of (ii) does not use the values of
$N_h$ and $M_h$, so it applies to every iterate.  At a fixed point the
identity \eqref{eq:discrete-chain-rule} gives
$\phi(X_h^{m+1}(q))=\phi(q)$, as in Theorem~\ref{theorem:exact}.
\end{proof}

\begin{remark}
\label{rem:dual-cases}
The proof of Theorem~\ref{thm:dual} uses the stiffness forms only
through their nonnegativity and the area inequality, so it carries over to the
following cases.
\begin{enumerate}[label={\rm(\alph*)},leftmargin=*,itemsep=1pt,topsep=2pt]
\item \emph{Single-patch schemes.}  Restrict \(\mathcal A_h\) and
\(\mathcal D_h\) to \(\Gamma_h^m\) and the patch \(\Gamma_{\star,h}^m\) of
\eqref{eq:sigma}, with weights \(\gamma_0\) and \(\sigma\) in \(\mathcal A_h\);
the energy is \(W_{\star,h}^m\) of \eqref{eq:single-energy}.
\item \emph{Free surface only.}  If \(\gamma_{\rm in}=\gamma_{\rm out}\),
mesh only \(\Gamma_h^m\) and impose the attachment at the contact-line
vertices; the energy is \(\gamma_0|\Gamma_h^m|\) (Remark~\ref{rem:equal-densities}).
\item \emph{Anisotropic energies.}  Add
\(-\mathcal D_h(z_h^{m+1},w_h)\) to \eqref{eq:aniso-scheme-b}, together with
\eqref{eq:dual-mdr} and \eqref{eq:dual-z}, keeping the isotropic form
\(\mathcal D_h\).  Lemma~\ref{lem:aniso-area} replaces the area inequality,
the energy is \(W_h^{\gamma,m}\), and the dissipation is
\(\tau\|\beta(n_h^m)^{1/2}\nabla_{\Gamma_h^m}\varkappa_{\gamma,h}^{m+1}\|_{L^2(\Gamma_h^m)}^2\).
If \(\gamma_{\rm in}=\gamma_{\rm out}\), only the free surface is meshed, as
in (b).
\end{enumerate}
\end{remark}
}
\section{Numerical experiments}
\label{sec:numerics}
\raggedbottom

The computations for surfaces in 3d use NGSolve/Netgen
\cite{Schoberl1997,Schoberl2014} and a sparse direct solver. The presented schemes easily
generalize to the evolution of curves in 2d, and the corresponding simulations
use an ALBERTA
\cite{SchmidtSiebert2005alberta} implementation with iterative linear solvers.
The substrate is \(\{\phi=0\}\), the time step is \(\tau=T/N_T\), and \(h\) denotes the mesh size.  We take \(\beta\equiv1\) in \eqref{eq:sd} and \eqref{eq:aniso-sd}.

{For nodal attachment we use Algorithm~\ref{alg:picard} with the
difference-quotient map \(\mathcal M_q^{\rm dq}\) of \eqref{eq:rank-one} and
\({\rm tol}_\phi=10^{-10}\). {In all computations \(N_h=\hat n_h^m\).} {The average number of Picard iterations per time step is at most \(5\) in all runs.}  For comparison, we also use the level-set normal, that is, we stop after the first solve, in which \(M_h(q)=\nabla\phi(q)=\mathcal M_q^{\rm dq}(0)\);
by Remark~\ref{rem:non-normalized} this is the same as using the unit normal
\eqref{eq:level-set-normal}.}

We record the attachment residual per step and the accumulated drift,
\[
r_\phi^m:=\max_q|\phi(q+\tau v_h^{m+1}(q))-\phi(q)|,
\qquad
{d_\phi^m:=\max_q|\phi(q)|},
\]
where \(q\) runs over the {vertices of \(\Gamma_{{\rm base},h}^m\)}.  We also monitor \(\max R/r\), the largest ratio of circumradius to inradius over all triangles, and the energy \(W_h^m\).  In all runs the discrete energy is monotonically decreasing, and every Picard
iterate satisfies \(W_h^{(k)}\le W_h^m\).

\subsection{Curves in two dimensions}
\label{subsec:num-curves}

We start with curves in $\mathbb R^2$, where the contact line becomes two
contact points.  In all runs the substrate curve has fixed ends at $x=\pm2$,
$\gamma_0=\gamma_{\rm out}=1$, and the film is a single drop.
We use the full-substrate scheme, in which the film, the wetted part and the
two dry parts of the substrate curve are all discretized, together with nodal
attachment by Algorithm~\ref{alg:picard}.

\begin{figure}[!ht]
  \centering
  \begin{subfigure}[t]{0.32\textwidth}\centering
    \includegraphics[width=\linewidth]{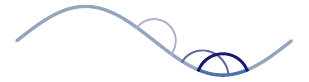}
    \par\smallskip{\small (a) $t=0,0.5,1$\par}
  \end{subfigure}\hfill
  \begin{subfigure}[t]{0.32\textwidth}\centering
    \includegraphics[width=\linewidth]{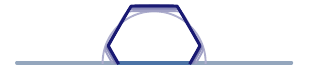}
    \par\smallskip{\small (b) $t=0,\dots,0.1$\par}
  \end{subfigure}\hfill
  \begin{subfigure}[t]{0.32\textwidth}\centering
    \includegraphics[width=\linewidth]{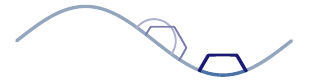}
    \par\smallskip{\small (c) $t=0,0.1,0.5,1$\par}
  \end{subfigure}
  \caption{Drops in two dimensions; later times are darker.  (a) Isotropic
  drop sliding into the valley; (b) anisotropic drop ($L=3$) relaxing to the
  Winterbottom shape; (c) anisotropic drop ($L=3$) sliding into the valley.}
  \label{fig:curves}
\end{figure}

\emph{Isotropic drop on a sine-shaped substrate.}  The substrate is
$y=-\frac12\sin(\frac{\pi x}{2})$ and $\gamma_{\rm in}=1$, so the Young angle
is $90^\circ$.  The drop starts as a circular arc on the flank.  With nodal
attachment, $250$ vertices, $\tau=10^{-3}$ and $T=1$, it slides into the
valley at $x=1$ and comes to rest there, with contact angles
$87.8^\circ$ and $88.8^\circ$ (Figure~\ref{fig:curves}(a)).  The vertices
stay on the substrate to $3.6\times10^{-9}$, and the energy decreases
monotonically.

\emph{Anisotropic drops.}  We use the scheme of Section~\ref{sec:anisotropic}.  The free-surface density is the planar version of
\eqref{eq:aniso-gamma} with $L=3$,
$a_\ell=(\cos\frac{\ell\pi}{3},\sin\frac{\ell\pi}{3})$ and
$\underline G_\ell=a_\ell\otimes a_\ell+\epsilon^2(I-a_\ell\otimes a_\ell)$,
$\epsilon=10^{-2}$.  Its Wulff shape, the curve of least energy for a given
area, is nearly a hexagon.

On the flat substrate $y=0$ we take $\gamma_{\rm in}=1.75$ and start from a
half-disk of radius $0.75$, with $400$ vertices, $\tau=10^{-3}$ and $T=0.1$.
The drop relaxes to the Winterbottom shape \cite{Winterbottom1967,BaoJiangSrolovitzWang2017,KreutzSchmidt2026}, the Wulff shape cut by the
substrate (Figure~\ref{fig:curves}(b)).  The ratio of its height to its
maximal width is $0.624$, against $0.623$ from the Winterbottom construction.
On the sine-shaped substrate we take $\gamma_{\rm in}=1$, $\tau=5\times10^{-3}$
and the initial arc of (a).  The drop becomes hexagonal and slides into the
valley (Figure~\ref{fig:curves}(c)).
In both runs the energy decreases monotonically.

\subsection{Nodal attachment versus the level-set normal on a sphere}
\label{subsec:num-bowl}

{The substrate is the unit sphere centred at \((0,0,-\tfrac12)\).  The initial film is the upper hemisphere of
radius \(\sqrt3/2\) centred at the origin.  The wetted patch \(\Gamma_{\rm in}\) is the cap of the sphere above \(z=0\), so the initial contact angle is \(30^\circ\).  We use the single-patch
scheme with \(h=1/16\) and \(T=0.1\), and compare nodal attachment with the level-set
normal, that is, Algorithm~\ref{alg:picard} with and without the Picard
iteration.

First let \(\gamma_0=\gamma_{\rm out}=1\) and \(\gamma_{\rm in}=1/2\), so that the Young angle is \(\theta=60^\circ\) and the relaxation is mild.  Since \(\gamma_{\rm in}<\gamma_{\rm out}\), the scheme meshes the dry patch \(\Gamma_{\rm out}\), the part of the sphere below \(z=0\) (\(\star={\rm out}\)).  With \(\tau=5\times10^{-4}\) both runs dissipate energy, but only with nodal attachment do the vertices stay on the sphere: \(d_\phi^N=1.27\times10^{-8}\) against \(9.05\times10^{-3}\), and \(\max R/r=2.89\) against \(20.7\) (Table~\ref{tab:bowl} and Figure~\ref{fig:bowl-mild}).

Next let \(\gamma_0=\gamma_{\rm in}=1\) and \(\gamma_{\rm out}=1/2\), so that \(\theta=120^\circ\).  The drop therefore contracts strongly; we take \(\star={\rm in}\) and \(\tau=5\times10^{-5}\).
{Figure~\ref{fig:bowl-attachment} shows the drop in full (top) and with the
front half of the free surface removed (bottom).  With nodal attachment
(Figure~\ref{fig:bowl-attachment}(b),(e)) the drop contracts and rises on the
sphere, and the wetted patch stays on the sphere.
The drift is \(d_\phi^N=1.5\times10^{-8}\), and \(\max R/r\le3.75\) (Table~\ref{tab:bowl}).
With the level-set normal
(Figure~\ref{fig:bowl-attachment}(c),(f)) the substrate vertices leave the
sphere already in the first step and do not return. The computation stops at
\(t=0.036\) with \(\max R/r>10^4\). }}

\begin{table}[htbp]
  \centering
  \caption{Diagnostics for nodal attachment and the level-set normal.}
  \label{tab:bowl}
{\begin{tabular}{llccc}
  \hline
  \(\theta\) & normal & \(\max_m r_\phi^m\) & \(d_\phi^N\) & \(\max R/r\) \\
  \hline
  \(60^\circ\) & nodal attachment & \(9.87\times10^{-11}\) & \(1.27\times10^{-8}\) & \(2.89\) \\
  & level-set normal & \(8.61\times10^{-3}\) & \(9.05\times10^{-3}\) & \(20.7\) \\
  \(120^\circ\) & nodal attachment & \(9.98\times10^{-11}\) & \(1.47\times10^{-8}\) & \(3.75\) \\
  & level-set normal & \(4.03\times10^{-2}\) & \(4.08\times10^{-2}\) & \(>10^4\) \\
  \hline
  \end{tabular}}
\end{table}

\begin{figure}[!htb]
  \centering
  \begin{subfigure}[t]{0.23\textwidth}
    \centering
    \includegraphics[width=\linewidth]{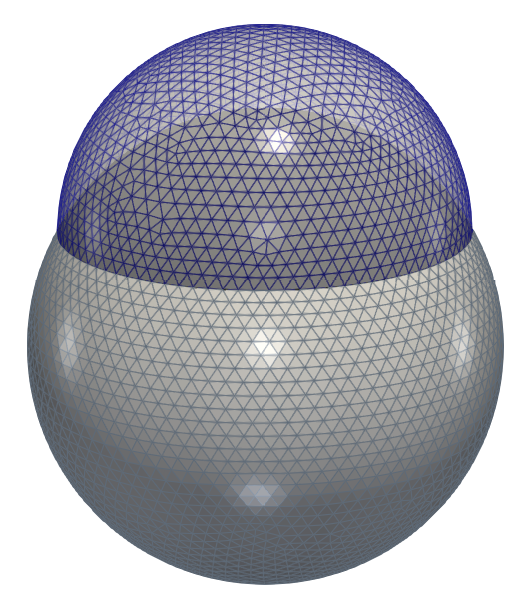}
    \par\smallskip{\small (a) initial, \(t=0\)\par}
  \end{subfigure}\hfill
  \begin{subfigure}[t]{0.23\textwidth}
    \centering
    \includegraphics[width=\linewidth]{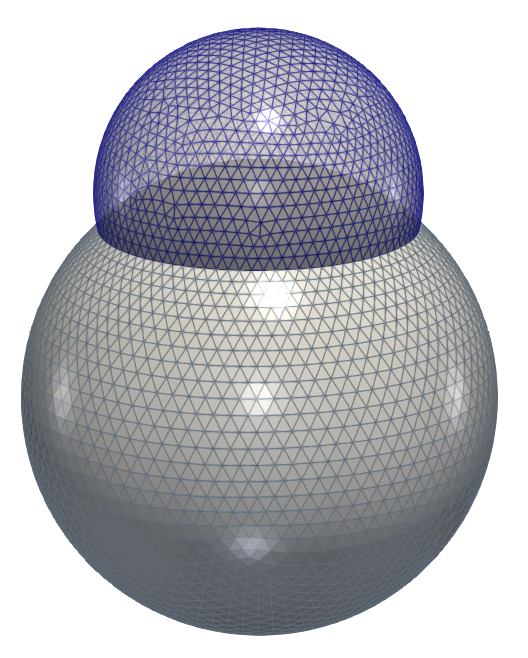}
    \par\smallskip{\small (b) nodal attachment, \(t=0.1\)\par}
  \end{subfigure}\hfill
  \begin{subfigure}[t]{0.23\textwidth}
    \centering
    \includegraphics[width=\linewidth]{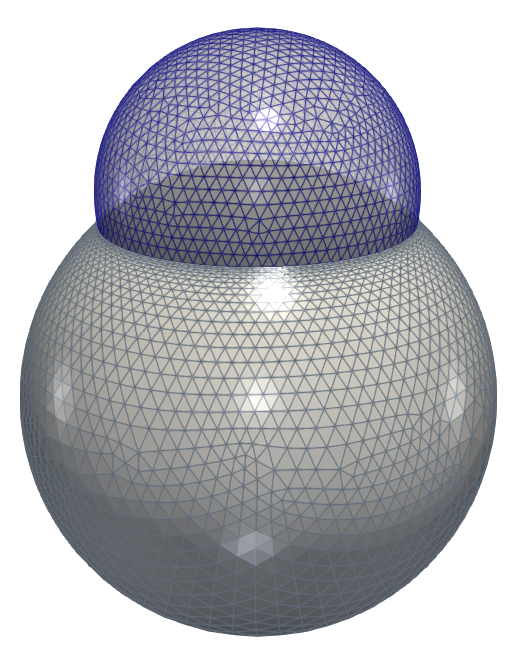}
    \par\smallskip{\small (c) level-set normal, \(t=0.1\)\par}
  \end{subfigure}
  \caption{\(\theta=60^\circ\).  The lower surface is the meshed dry patch.}
  \label{fig:bowl-mild}
\end{figure}

\begin{figure}[!htb]
  \centering
  \begin{subfigure}[t]{0.23\textwidth}
    \centering
    \includegraphics[width=\linewidth]{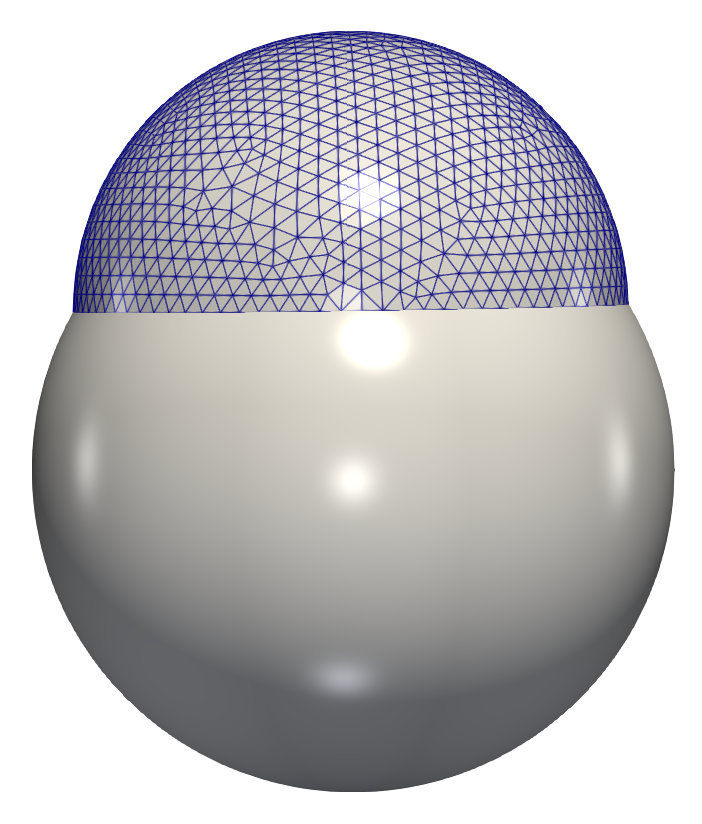}
    \par\smallskip{\small (a) initial, \(t=0\)\par}
  \end{subfigure}\hfill
  \begin{subfigure}[t]{0.23\textwidth}
    \centering
    \includegraphics[width=\linewidth]{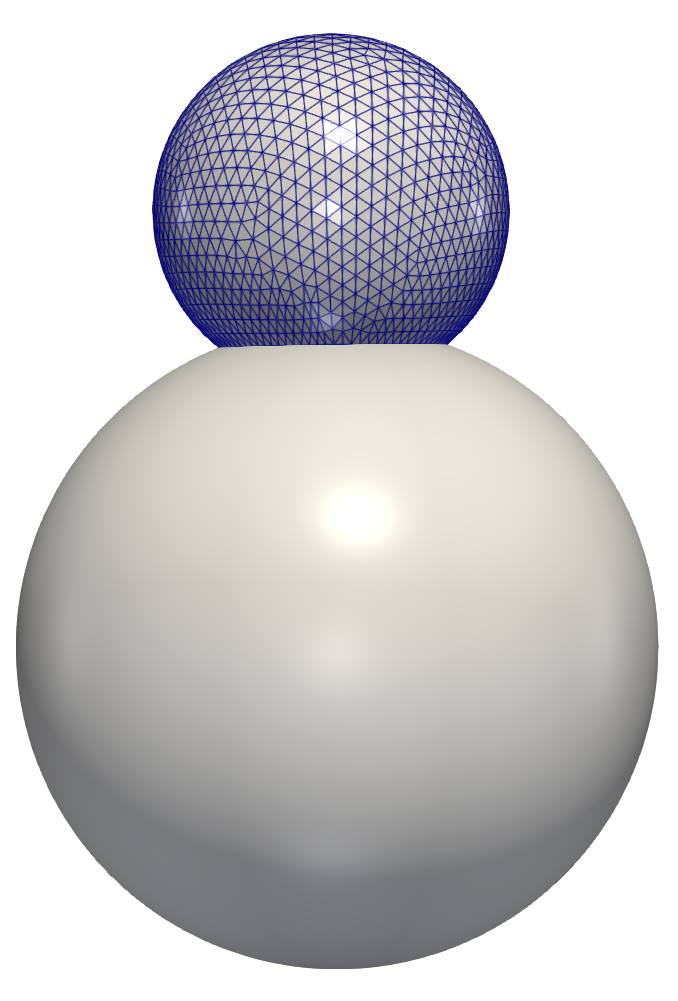}
    \par\smallskip{\small (b) nodal attachment, \(t=0.1\)\par}
  \end{subfigure}\hfill
  \begin{subfigure}[t]{0.23\textwidth}
    \centering
    \includegraphics[width=\linewidth]{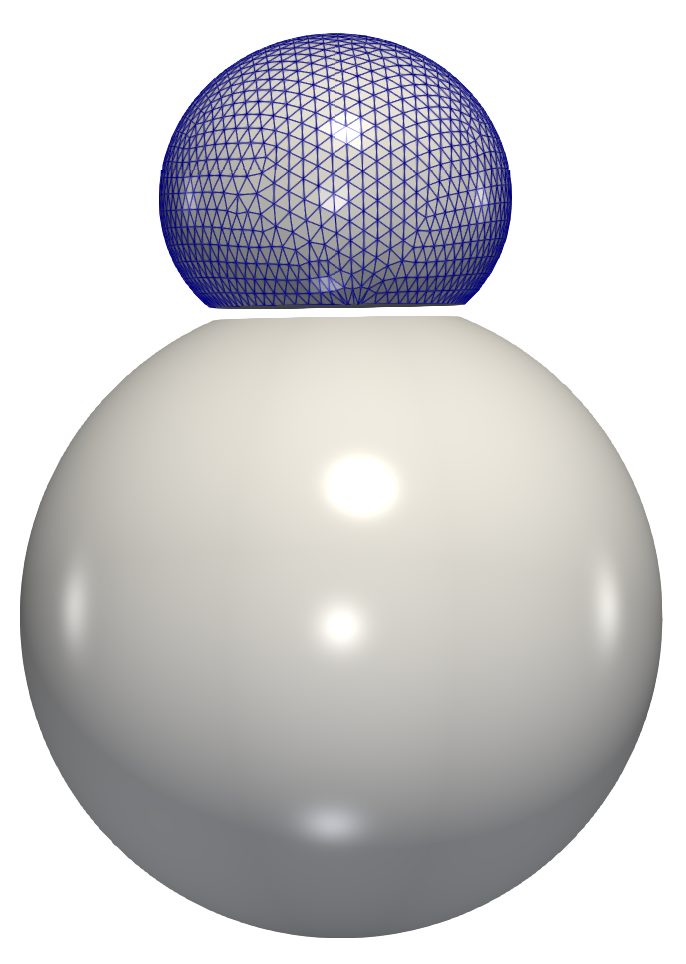}
    \par\smallskip{\small (c) level-set normal, \(t=0.03\)\par}
  \end{subfigure}\\[4pt]
  \begin{subfigure}[t]{0.23\textwidth}
    \centering
    \includegraphics[width=\linewidth]{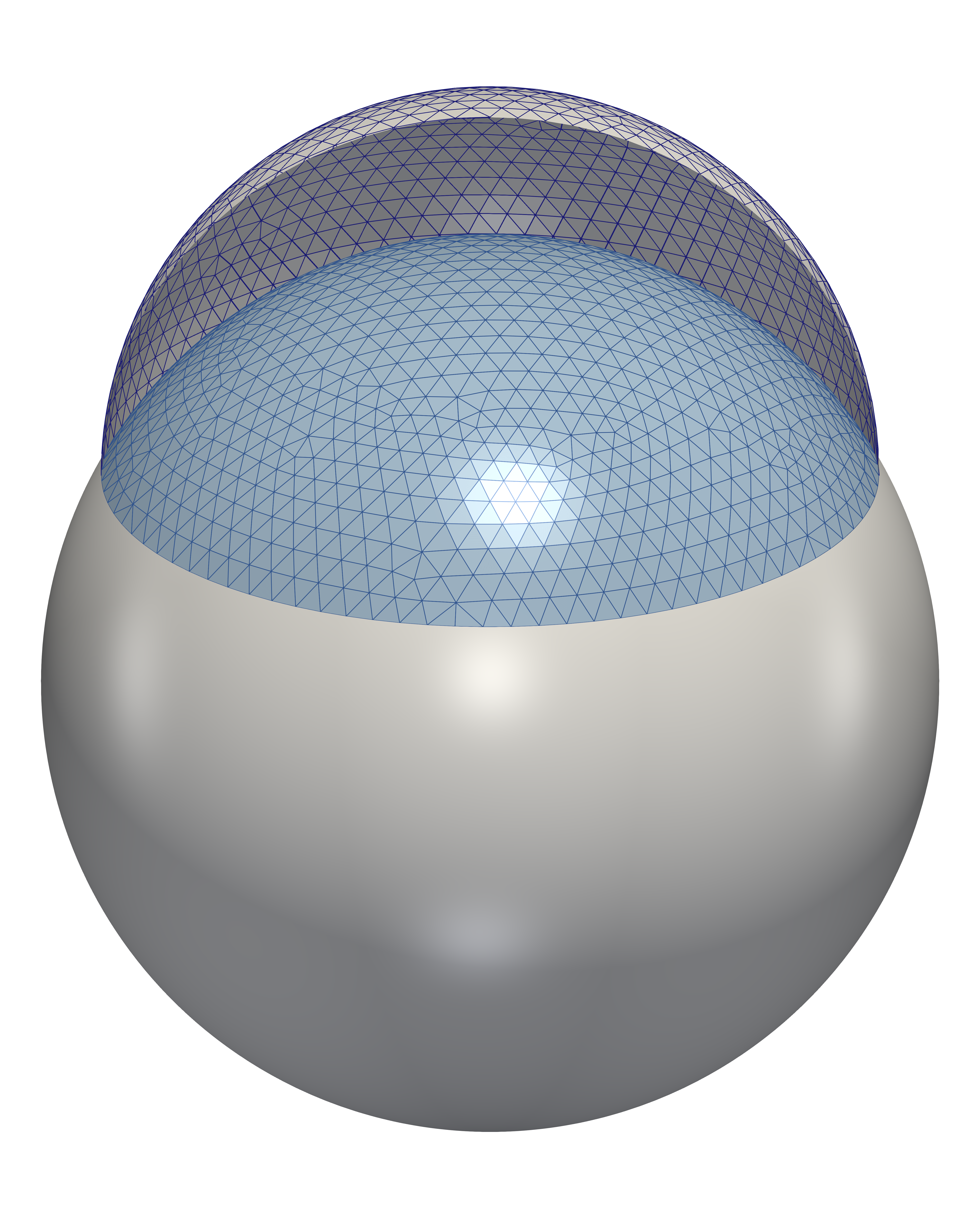}
    \par\smallskip{\small (d) as (a)\par}
  \end{subfigure}\hfill
  \begin{subfigure}[t]{0.23\textwidth}
    \centering
    \includegraphics[width=\linewidth]{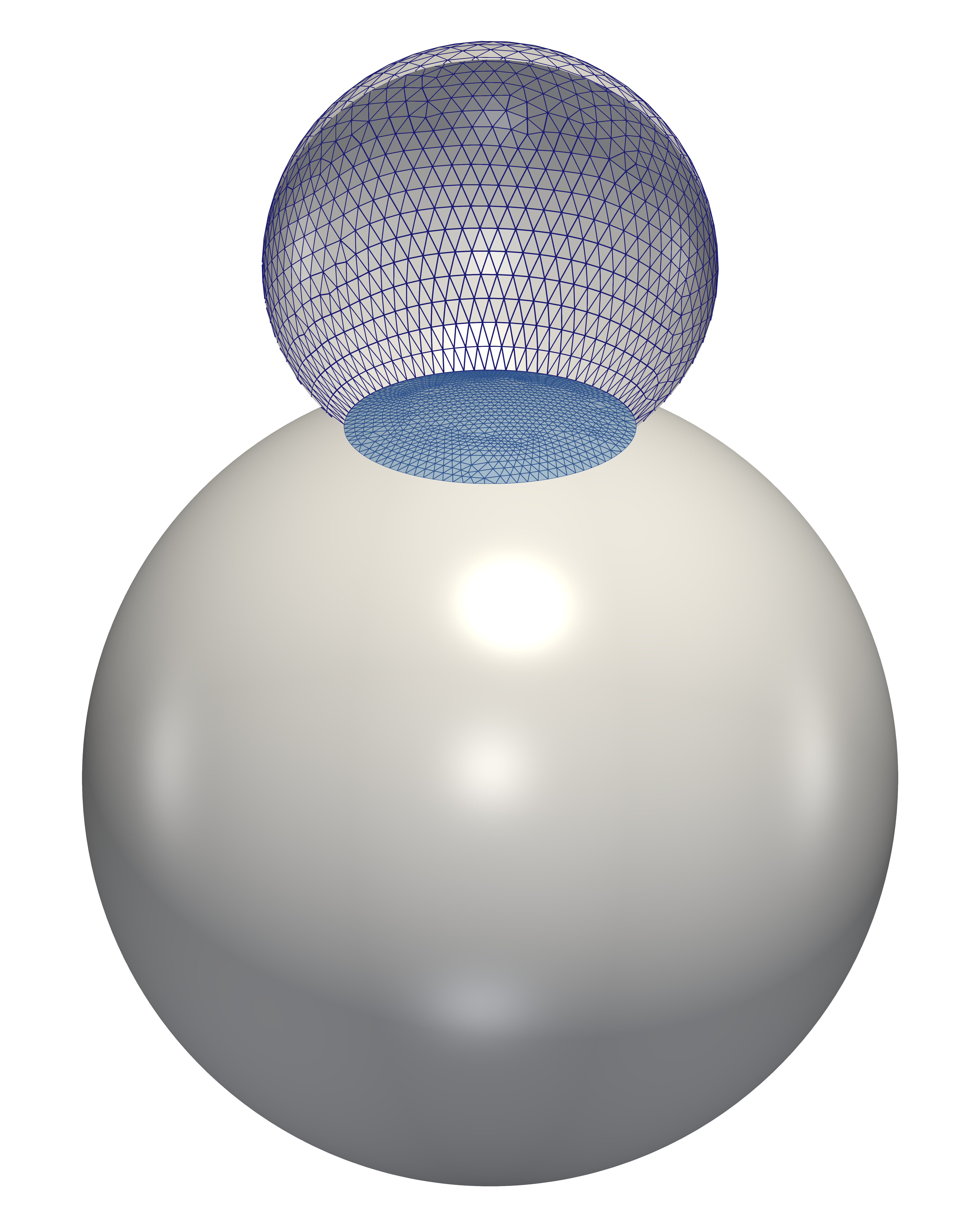}
    \par\smallskip{\small (e) as (b)\par}
  \end{subfigure}\hfill
  \begin{subfigure}[t]{0.23\textwidth}
    \centering
    \includegraphics[width=\linewidth]{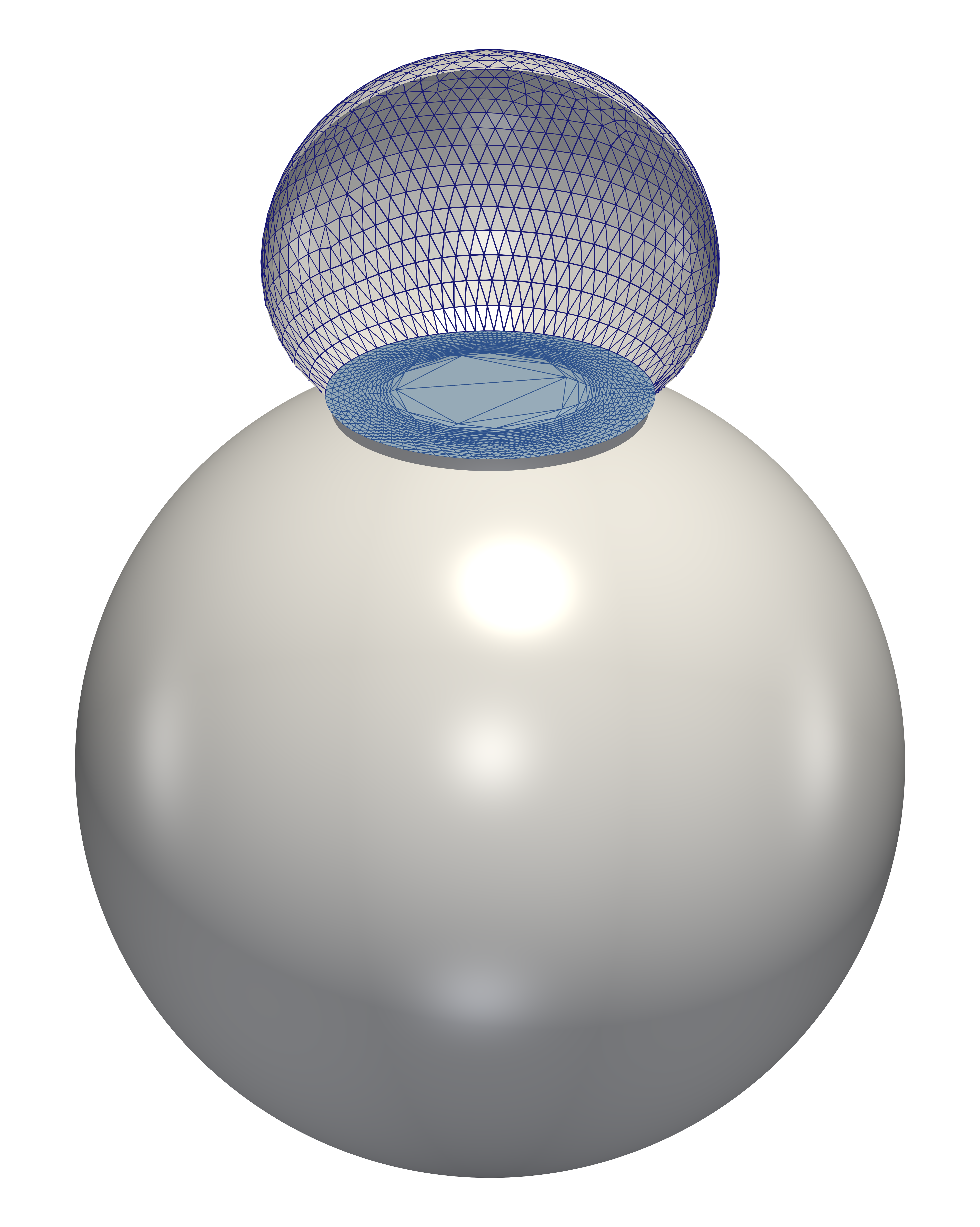}
    \par\smallskip{\small (f) as (c)\par}
  \end{subfigure}
  \caption{{\(\theta=120^\circ\). Top: full view.  Bottom: the front half
  of the free surface is removed to show the wetted patch (light blue) on the
  sphere.}}
  \label{fig:bowl-attachment}
\end{figure}

\subsection{Curvature-driven migration on a non-axisymmetric substrate}
\label{subsec:num-egg}

The substrate is the egg-carton graph
\[
z=f(x,y)=0.06\,\cos\bigl(3(x-\tfrac\pi6)\bigr)\,\cos(3y),
\qquad \phi=f-z,
\]
over the disk of radius \(2.5\).  It has a peak/valley at
\((\pm\frac\pi6,0)\).  For the normal pointing away
from the substrate, its mean curvature is largest at the peak and smallest in
the valley.  The drop starts at the origin.  We take \(\gamma_0=\gamma_{\rm in}=1\) and
\(\gamma_{\rm out}=1/2\), so that the Young angle is \(120^\circ\).  To build
the initial meshes, we start from a flat disk of radius \(2.5\), split into a
wetted disk of radius \(0.35\) and the surrounding annulus, put a
\(120^\circ\) spherical cap on the wetted disk, and lift all three meshes by
\((x,y,z)\mapsto(x,y,z+f(x,y))\).  The lifted meshes are conforming, every
substrate vertex lies on the substrate, and the contact angle becomes
\(116.9^\circ\pm1.7^\circ\).  We use the full-substrate scheme with
nodal attachment by the Picard iteration of Algorithm~\ref{alg:picard},
\(h=1/12\), \(\tau=h^2\), and
\(T=2\).

\begin{figure}[htbp]
  \centering
  \begin{minipage}[c]{0.33\textwidth}\centering
    \includegraphics[width=\linewidth]{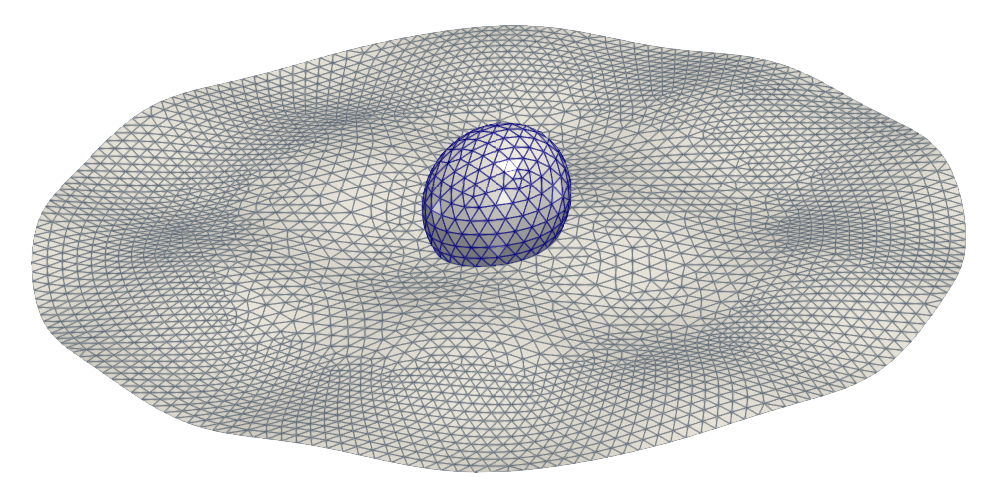}
  \end{minipage}\hfill
    \begin{minipage}[c]{0.33\textwidth}\centering
    \includegraphics[width=\linewidth]{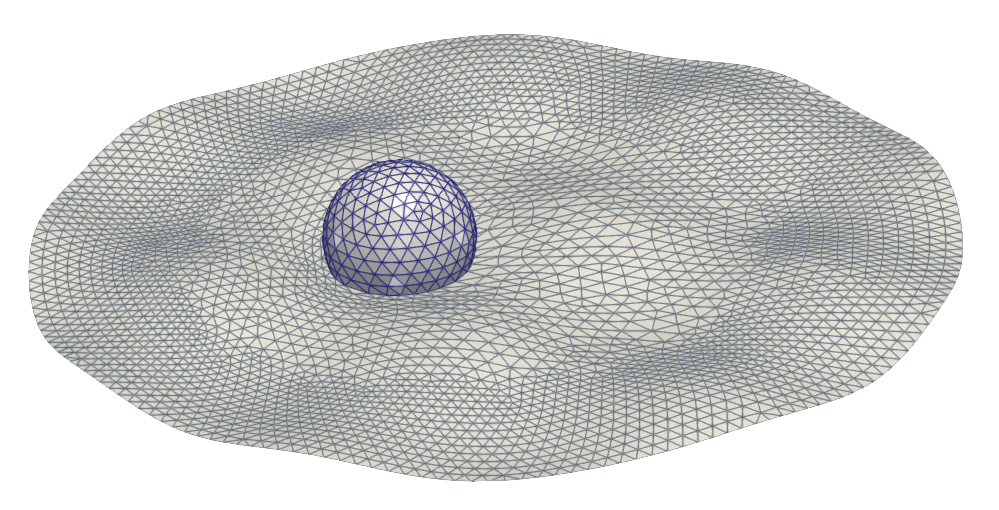}
  \end{minipage}\hfill
  \begin{minipage}[c]{0.33\textwidth}\centering
    \includegraphics[width=\linewidth]{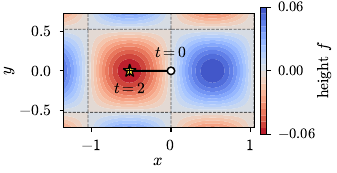}
  \end{minipage}\\[3pt]
  \begin{minipage}[t]{0.33\textwidth}\centering
    {\small (a) initial mesh, \(t=0\)}
  \end{minipage}\hfill
  \begin{minipage}[t]{0.33\textwidth}\centering
    {\small (b) final mesh, \(t=T\)}
  \end{minipage}\hfill
  \begin{minipage}[t]{0.30\textwidth}\centering
    {\small (c) centroid path, \(0\le t\le T\)}
  \end{minipage}
  \caption{Egg-carton migration, \(T=2\).  In (c) the colour shows the
  substrate height; the star marks the centroid at \(T\).}
  \label{fig:egg-meshes}
\end{figure}

{The drop moves towards the curvature minimum (Figure~\ref{fig:egg-meshes}).
We follow the centroid of the contact line, weighted by arc length.  At \(T=2\) it has stopped at
\(x=-0.521\), within \(0.003\) of the valley at \(x=-\frac\pi6\).  This agrees qualitatively with
the law derived in \cite{ZhaoJiangWangSrolovitzQianBao2024} for small islands,
whose velocity is proportional to the negative surface gradient of the
substrate mean curvature.}
{Without remeshing or smoothing, \(\max R/r=4.85\) at \(T\); the energy decreases at every step, and the volume changes by \(0.2\%\).}

\subsection{Young-law convergence on an egg-carton substrate}
\label{subsec:num-young}

We next test Young's law on a second egg-carton graph,
\[
f(x,y)=0.12\cos\bigl(1.5(x-0.35)\bigr)
       \cos\bigl(1.5(y+0.20)\bigr),
\]
over the disk of radius \(1.5\).  A spherical cap
with contact radius \(0.55\) and planar angle \(90^\circ\) is lifted by
\((x,y,z)\mapsto(x,y,z+f(x,y))\).  We take \(\gamma_0=\gamma_{\rm in}=1\) and \(\gamma_{\rm out}=1/2\), so that \(\theta=120^\circ\), giving
an initial mean-angle mismatch of about \(20^\circ\).  The discrete angle is
computed from the conormals adjacent to each contact edge and averaged along
the contact line.  We set \(T=1\), \(\tau=h^2\), and
\(h=1/10,1/12,1/16,1/20\).

The mean angle moves towards \(\theta=120^\circ\).  At \(T=1\), its error
decreases from \(6.63^\circ\) to \(3.40^\circ\) over the four meshes, with observed orders
\(0.91\), \(0.99\), and \(0.96\) (Table~\ref{tab:young}).  {Every run satisfies
\(\max_m r_\phi^m\le9.9\times10^{-11}\) and \(\max R/r\le4.1\).}

\begin{table}[htbp]
  \centering
  \caption{Young-law recovery on the egg-carton substrate.}
  \label{tab:young}
  \begin{tabular}{cccccc}
  \hline
  \(h\) & \(N_T\) & \(\theta_h(0)\) & \(\theta_h(T)\)
        & \(|\theta_h(T)-\theta|\) & EOC \\
  \hline
  \(1/10\) & \(100\) & \(97.73^\circ\)  & \(113.375^\circ\) & \(6.625^\circ\) & ---\\
  \(1/12\) & \(144\) & \(98.78^\circ\)  & \(114.391^\circ\) & \(5.609^\circ\) & \(0.91\) \\
  \(1/16\) & \(256\) & \(100.51^\circ\) & \(115.783^\circ\) & \(4.217^\circ\) & \(0.99\) \\
  \(1/20\) & \(400\) & \(101.84^\circ\) & \(116.597^\circ\) & \(3.403^\circ\) & \(0.96\) \\
  \hline
  \end{tabular}
\end{table}

\subsection{Anisotropic drops on a flat substrate}
\label{subsec:num-aniso}

We test the full-substrate scheme of Section~\ref{sec:anisotropic} in three space
dimensions. {The
free-surface density is \eqref{eq:aniso-gamma} with \(L=4\),
\(\underline G_\ell=a_\ell\otimes a_\ell+\epsilon^2(I-a_\ell\otimes a_\ell)\) and
\(\epsilon=10^{-2}\), where the unit vectors \(a_1,a_2,a_3\) are horizontal at
angles \(0,\frac\pi3,\frac{2\pi}3\) and \(a_4=e_3\).  We take
\(\gamma_{\rm in}=\gamma_{\rm out}=1\).}

On the plane $z=0$ a $60^\circ$ spherical cap of contact
radius $0.62$ sits on the disk of radius $1.5$; all three patches are meshed
with $h=1/12$ ($1365$ vertices), and $\tau=10^{-2}$.  On the flat substrate the condition $v_h(q)\cdot e_3=0$
already keeps the substrate vertices on $\{z=0\}$, so no Picard iteration is
needed: we use the scheme
\eqref{eq:aniso-scheme} with $N_h=\hat n_h^m$ and $M_h=e_3$, that is, with the BGN tangential motion.  The drop relaxes to the Winterbottom shape (Figure~\ref{fig:aniso}),
and the energy decreases at every step (Theorem~\ref{thm:aniso-energy}).

\begin{figure}[htbp]
  \centering
  \begin{subfigure}[b]{0.31\textwidth}\centering
    \includegraphics[width=\linewidth]{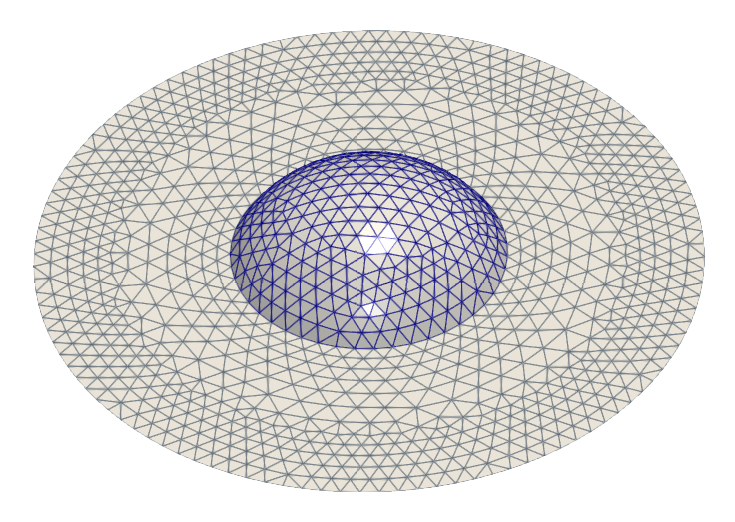}
    \par\smallskip{\small (a) $t=0$\par}
  \end{subfigure}\hfill
  \begin{subfigure}[b]{0.31\textwidth}\centering
    \includegraphics[width=\linewidth]{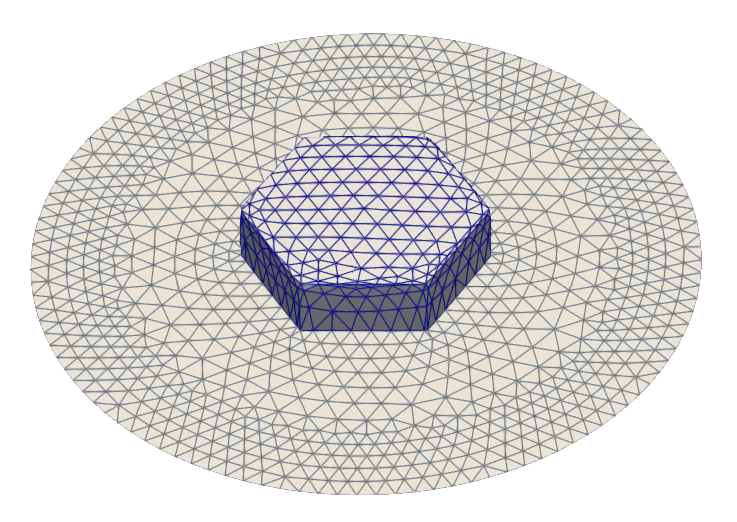}
    \par\smallskip{\small (b) $L=4$, BGN, $t=1$\par}
  \end{subfigure}\hfill
  \begin{subfigure}[b]{0.31\textwidth}\centering
    \includegraphics[width=\linewidth]{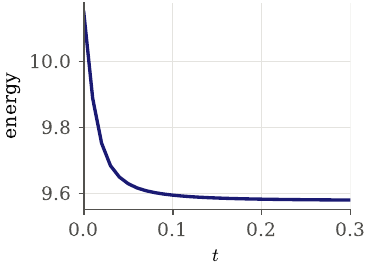}
    \par\smallskip{\small (c) total energy $W_h^{\gamma,m}$\par}
  \end{subfigure}
  \caption{Anisotropic drop on a flat substrate, {BGN method}, $\tau=10^{-2}$.}
  \label{fig:aniso}
\end{figure}

{
\subsection{Tangential motion: level-set BGN versus dual-MDR}
\label{subsec:num-dual}

We compare the level-set BGN motion of \S\ref{subsec:single-patch} with the dual-MDR motion of Section~\ref{sec:dual-mdr} for an isotropic drop (Figure~\ref{fig:dual-valley}); Figures~\ref{fig:dual-film} and~\ref{fig:aniso-sine} show the dual-MDR motion on the free surface alone for an isotropic and an anisotropic drop.  In all runs $N_h=\hat n_h^m$, and $M_h$ is the attachment field of Algorithm~\ref{alg:picard}.  The substrate is $\{\phi=0\}$ with $\phi=z-f(x,y)$, $f(x,y)=-\frac12\sin(\frac{2\pi x}{3})$, over $[-1.5,1.5]^2$.  We take $\gamma_{\rm in}=\gamma_{\rm out}=1$ and, for the isotropic drop, $\gamma_0=1$.  The free surface is initially the half-ellipsoid
\[
\bigl\{(x,y,f(x,y)+0.4s):\ \bigl(\tfrac{x-x_c}{0.4}\bigr)^2
+\bigl(\tfrac{y}{0.8}\bigr)^2+s^2=1,\ s\ge0\bigr\}.
\]
For $x_c=0$ the drop sits on the flank and slides towards the valley at
$x=0.75$ (slope start); for $x_c=0.75$ it starts in the valley and only rounds
off (valley start).  The full-substrate mesh for the isotropic drop has about $1400$ vertices, and the free-surface
meshes have about $240$.  All runs use nodal attachment, and a
run is stopped when a triangle has
$R/r>100$ or reverses its orientation.

\emph{Isotropic drop, valley start.}  The contact line changes from an ellipse with axes $0.8$ and $1.6$ to a nearly circular curve of diameter about $1.06$
(Figure~\ref{fig:dual-valley}).  We use $\tau=2.5\times10^{-4}$.  The level-set BGN run stops after $105$ steps ($t=0.026$),
when substrate triangles near the receding ends of the contact line
degenerate.  With the dual-MDR tangential motion, the mesh stays well shaped up to $T=1$
($\max R/r\le5.3$).

\begin{figure}[htbp]
  \centering
  \setlength{\fboxsep}{0pt}
  \begin{subfigure}[t]{0.32\textwidth}\centering
    \includegraphics[width=\linewidth,height=0.66\linewidth,keepaspectratio]{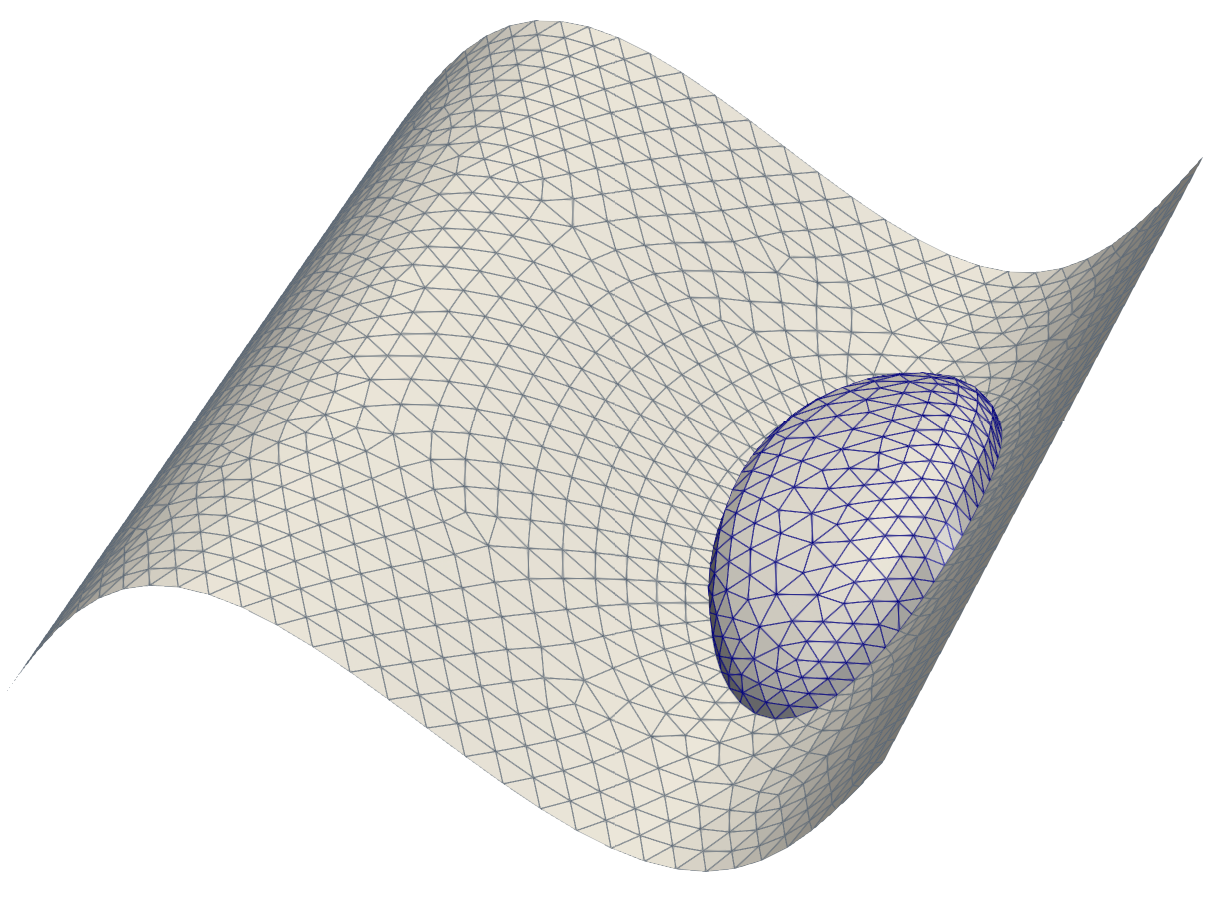}
    \par\smallskip{\small (a) $t=0$\par}
  \end{subfigure}\hfill
  \begin{subfigure}[t]{0.32\textwidth}\centering
    \includegraphics[width=\linewidth,height=0.66\linewidth,keepaspectratio]{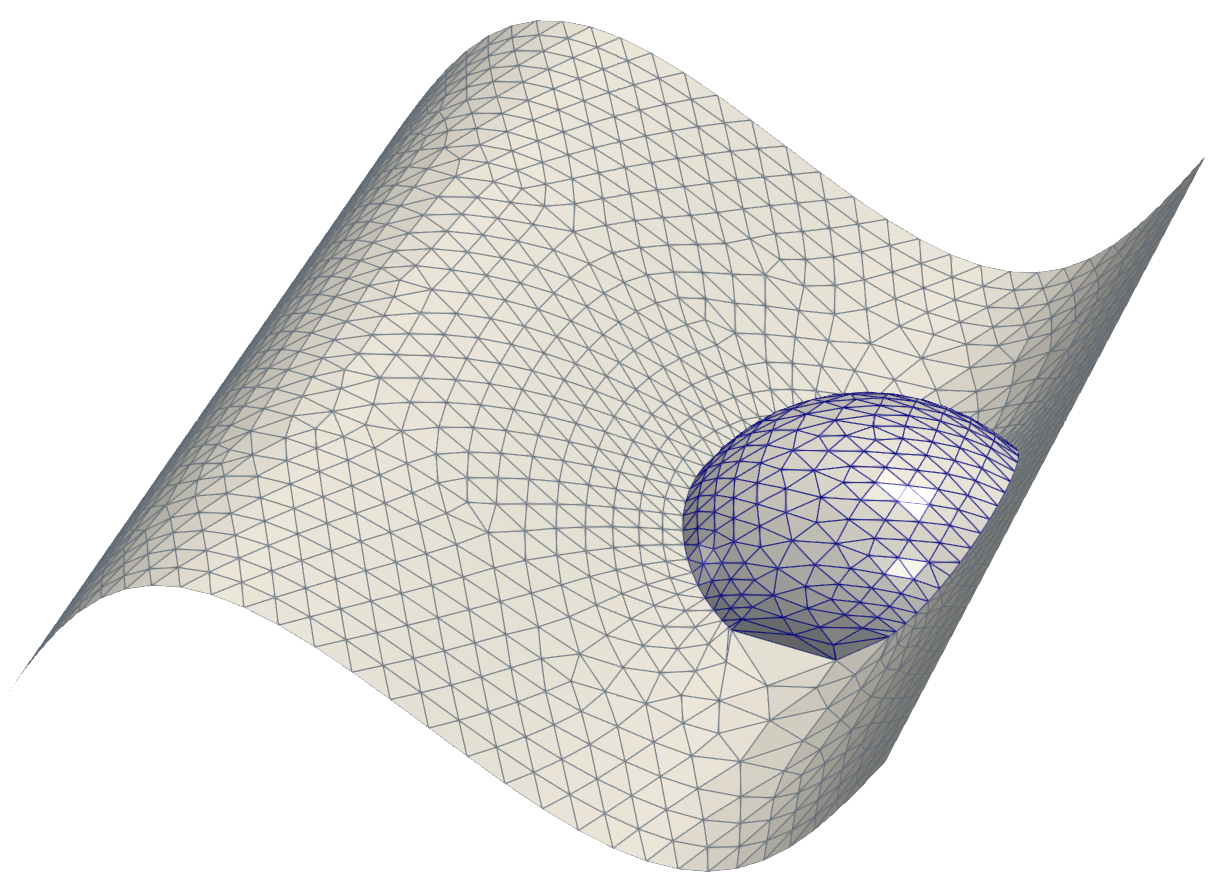}
    \par\smallskip{\small (b) level-set BGN, $t=0.026$\par}
  \end{subfigure}\hfill
  \begin{subfigure}[t]{0.32\textwidth}\centering
    \includegraphics[width=\linewidth,height=0.66\linewidth,keepaspectratio]{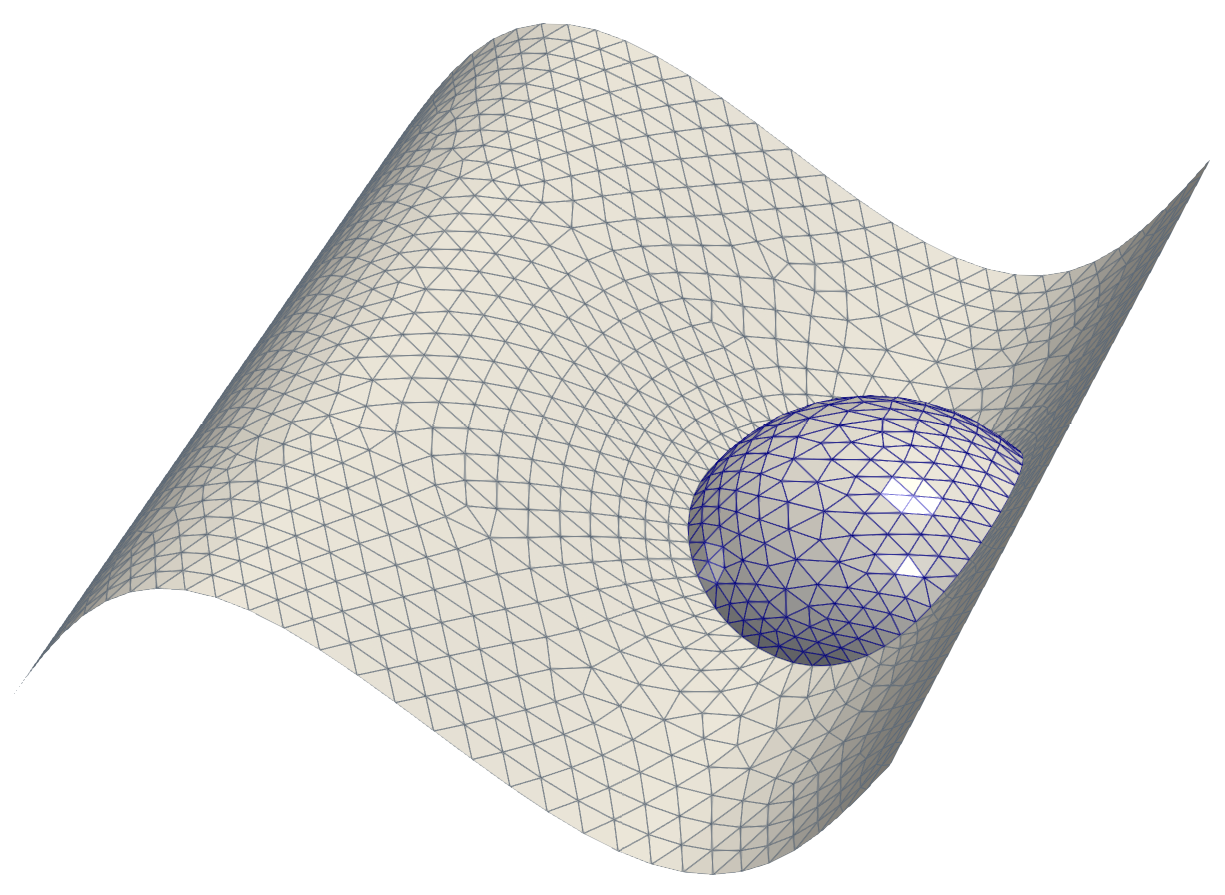}
    \par\smallskip{\small (c) dual-MDR, $t=1$\par}
  \end{subfigure}
  \caption{Valley start, full-substrate scheme with nodal attachment,
  $\tau=2.5\times10^{-4}$.}
  \label{fig:dual-valley}
\end{figure}

\emph{Free surface only.}  Since $\gamma_{\rm in}=\gamma_{\rm out}$, the
substrate part of the energy does not change, and the scheme can be posed on
$\Gamma_h$ alone (Remark~\ref{rem:dual-cases}(b)).  The substrate is then not
meshed; nodal attachment keeps the contact-line vertices on $\{\phi=0\}$.  We
use the slope start and $\tau=2.5\times10^{-4}$ (Figure~\ref{fig:dual-film}).
With the dual-MDR tangential motion, the drop slides into the valley at $x=0.75$, and the mesh
stays well shaped.

\begin{figure}[htbp]
  \centering
  \begin{subfigure}[t]{0.32\textwidth}\centering
    \includegraphics[width=\linewidth,height=0.66\linewidth,keepaspectratio]{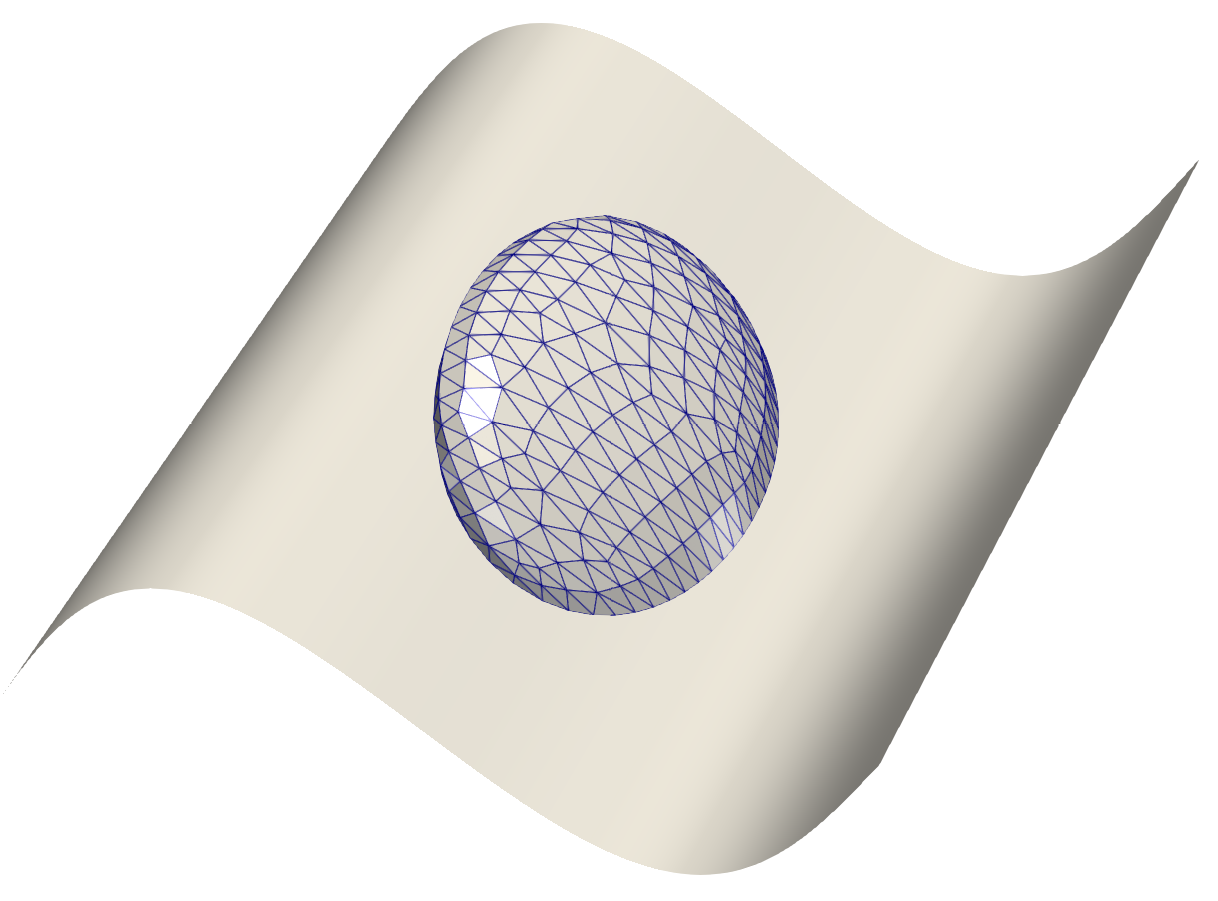}
    \par\smallskip{\small (a) initial state, $t=0$\par}
  \end{subfigure}\hspace{0.02\textwidth}
  \begin{subfigure}[t]{0.32\textwidth}\centering
    \includegraphics[width=\linewidth,height=0.66\linewidth,keepaspectratio]{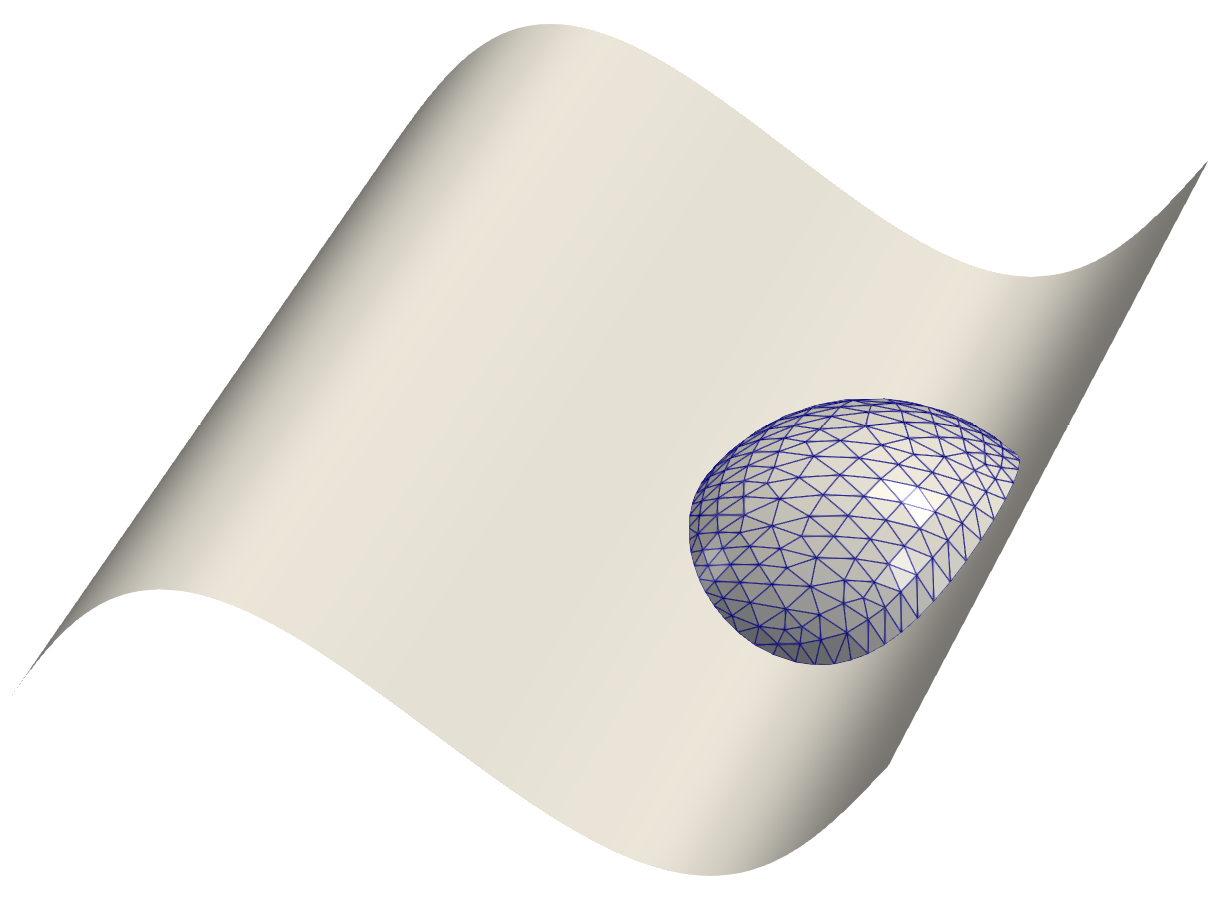}
    \par\smallskip{\small (b) dual-MDR, $t=1$\par}
  \end{subfigure}
  \caption{Slope start, free surface only, dual-MDR,
  $\tau=2.5\times10^{-4}$.}
  \label{fig:dual-film}
\end{figure}

\emph{Anisotropic drop.}  We take the density of
\S\ref{subsec:num-aniso} with $L=3$ and $a_\ell=e_\ell$ (Wulff
shape nearly a cube), start in the valley, and mesh only the free surface
(Remark~\ref{rem:dual-cases}(c)).  With dual-MDR the drop develops
flat facets, the mesh stays regular ($\max R/r\le8.9$), and the
film energy $\int_{\Gamma_h^m}\gamma(n_h^m)$ is monotonically decreasing
(Figure~\ref{fig:aniso-sine}).

\begin{figure}[htbp]
  \centering
  \begin{subfigure}[b]{0.31\textwidth}\centering
    \includegraphics[width=\linewidth]{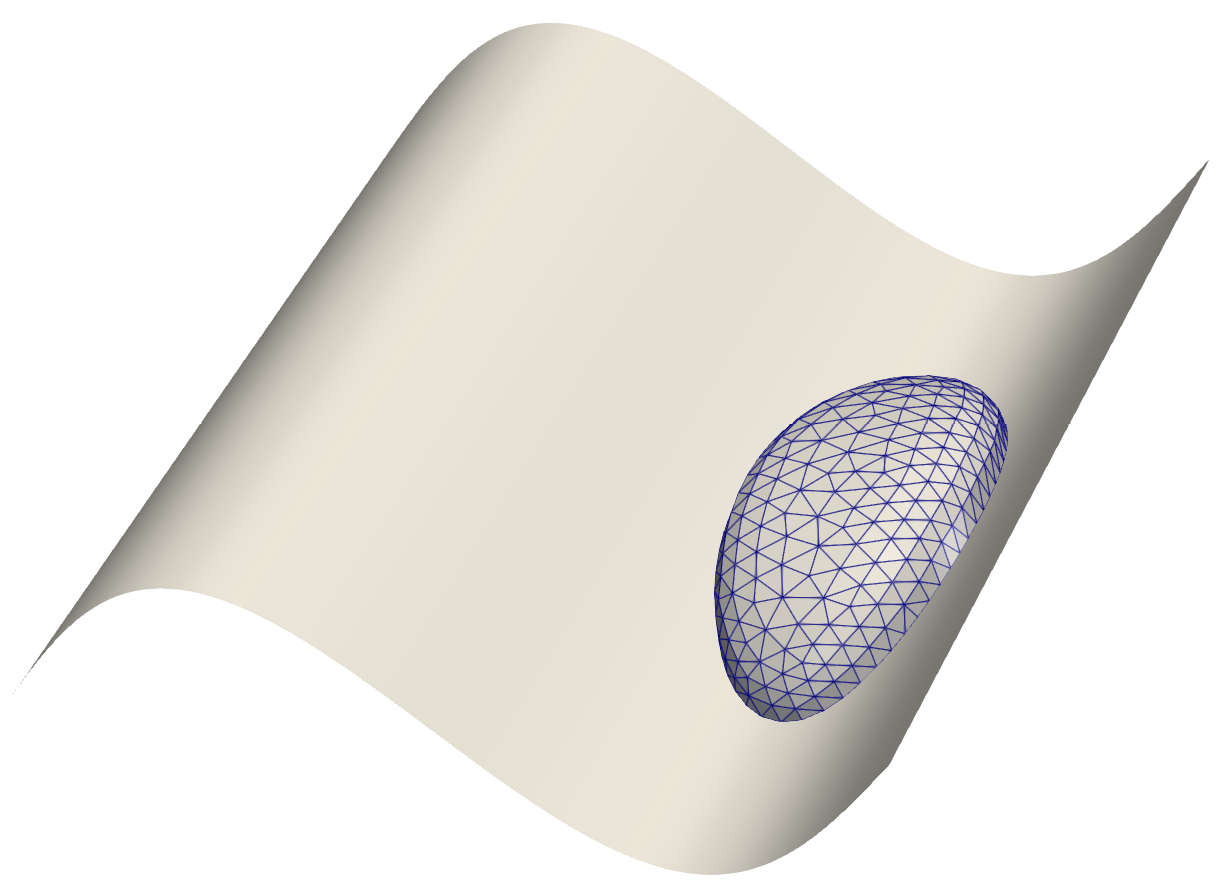}
    \par\smallskip{\small (a) $t=0$\par}
  \end{subfigure}\hfill
  \begin{subfigure}[b]{0.31\textwidth}\centering
    \includegraphics[width=\linewidth]{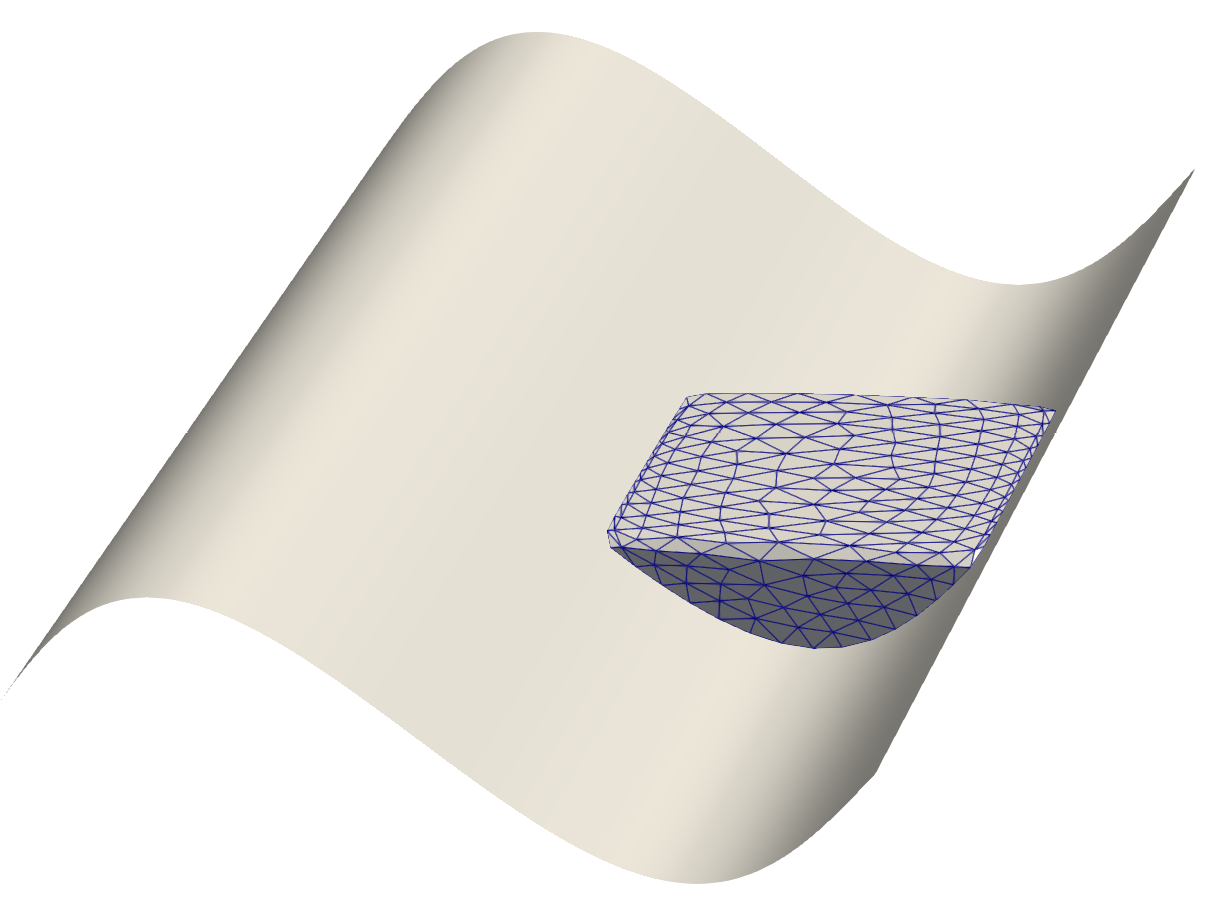}
    \par\smallskip{\small (b) $L=3$, dual-MDR, $t=1$\par}
  \end{subfigure}\hfill
  \begin{subfigure}[b]{0.31\textwidth}\centering
    \includegraphics[width=\linewidth]{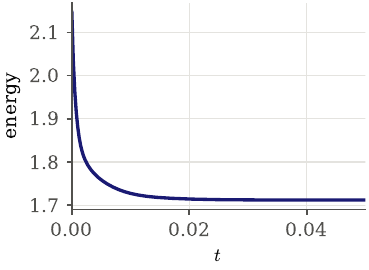}
    \par\smallskip{\small (c) $\int_{\Gamma_h^m}\gamma(n_h^m)$\par}
  \end{subfigure}
  \caption{Anisotropic drop, free
  surface only, dual-MDR,
  $\tau=2.5\times10^{-4}$.  The substrate is drawn as the exact surface.}
  \label{fig:aniso-sine}
\end{figure}

}

{
\subsection{An anisotropic substrate energy}
\label{subsec:num-aniso-sub}

{The substrate is the unit sphere centred at \((0,0,-\frac12)\), and
the whole sphere is meshed.  The film is isotropic with \(\gamma_0=1\),
\(\gamma_{\rm out}=1\), and
\(\gamma_{\rm in}(\vec p)=\frac12\sqrt{\vec p\cdot\underline G\,\vec p}\), which is
\eqref{eq:aniso-gamma} with \(L=1\) and \(\underline G_1=\frac14\underline G\).
For \(\underline G=I\) we have \(\gamma_{\rm in}=\frac12\) and, by Young's law,
\(\theta=60^\circ\); the initial drop is a spherical cap that meets the
substrate at \(60^\circ\), so for \(\underline G=I\) it is a stationary
solution of the continuous problem.  For
\(\underline G={\rm diag}(49,0.01,1)\) wetting costs far more where \(n_b\)
points in the \(x\)-direction than where it points in the \(y\)-direction.  We
use \eqref{eq:aniso-scheme} with the dual-MDR motion of
Remark~\ref{rem:dual-cases}(c), nodal attachment with Picard iteration, \(h=0.1\),
\(\tau=2.5\times10^{-4}\), and \(T=0.3\).}

{For \(\underline G=I\) the drop does not change visibly up to \(T\).
For \(\underline G={\rm diag}(49,0.01,1)\) it changes shape although its film
energy is isotropic: the half-width of the wetted patch shrinks from \(0.77\)
to \(0.44\) in \(x\) but only to \(0.69\) in \(y\), the drop becomes taller,
and at \(T\) the contact line is close to an ellipse with axis ratio \(1.56\)
(Figure~\ref{fig:aniso-sub}).  The energy \(W_h^{\gamma,m}\) decreases at every
step, all substrate vertices lie within \(6\times10^{-11}\) of the sphere, and
\(\max R/r\le4.1\).}

\begin{figure}[htbp]
  \centering
  \begin{minipage}[b]{0.45\textwidth}\centering
    \includegraphics[width=0.5\linewidth]{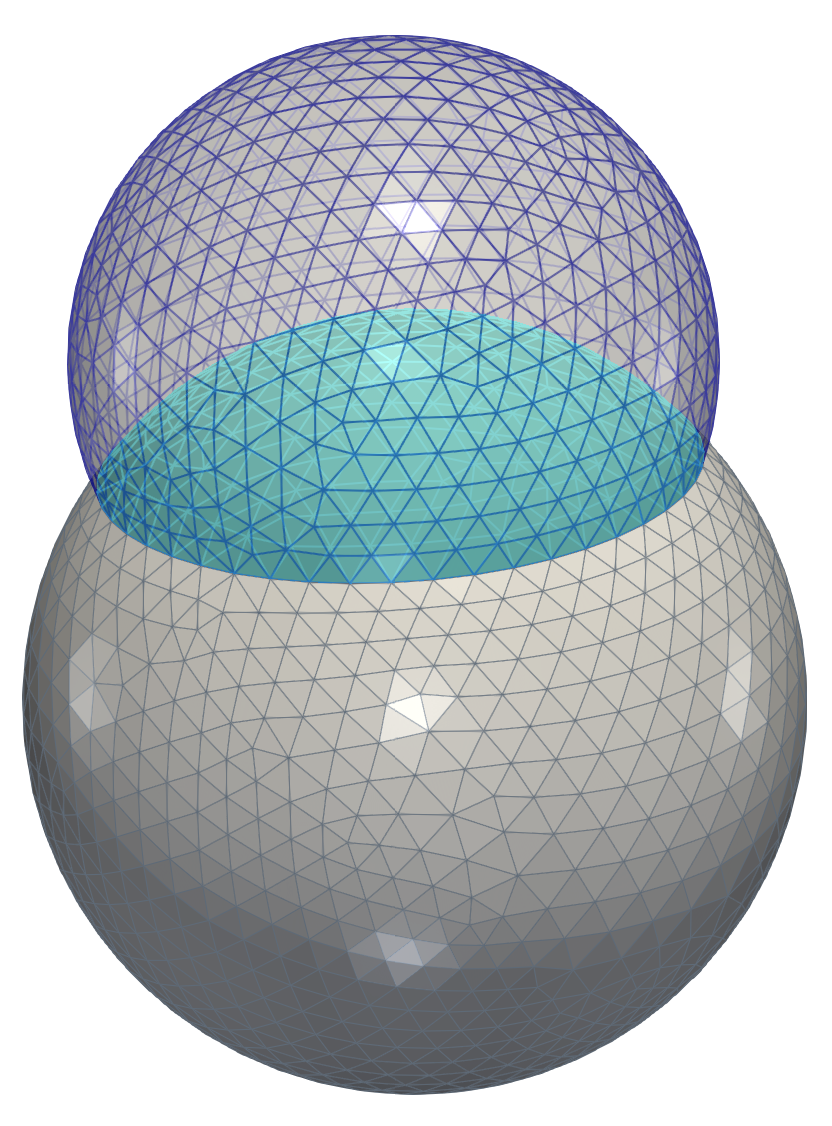}
    \par\smallskip{\small (a) \(\underline G=I\)\par}
  \end{minipage}\qquad
  \begin{minipage}[b]{0.45\textwidth}\centering
    \includegraphics[width=0.5\linewidth]{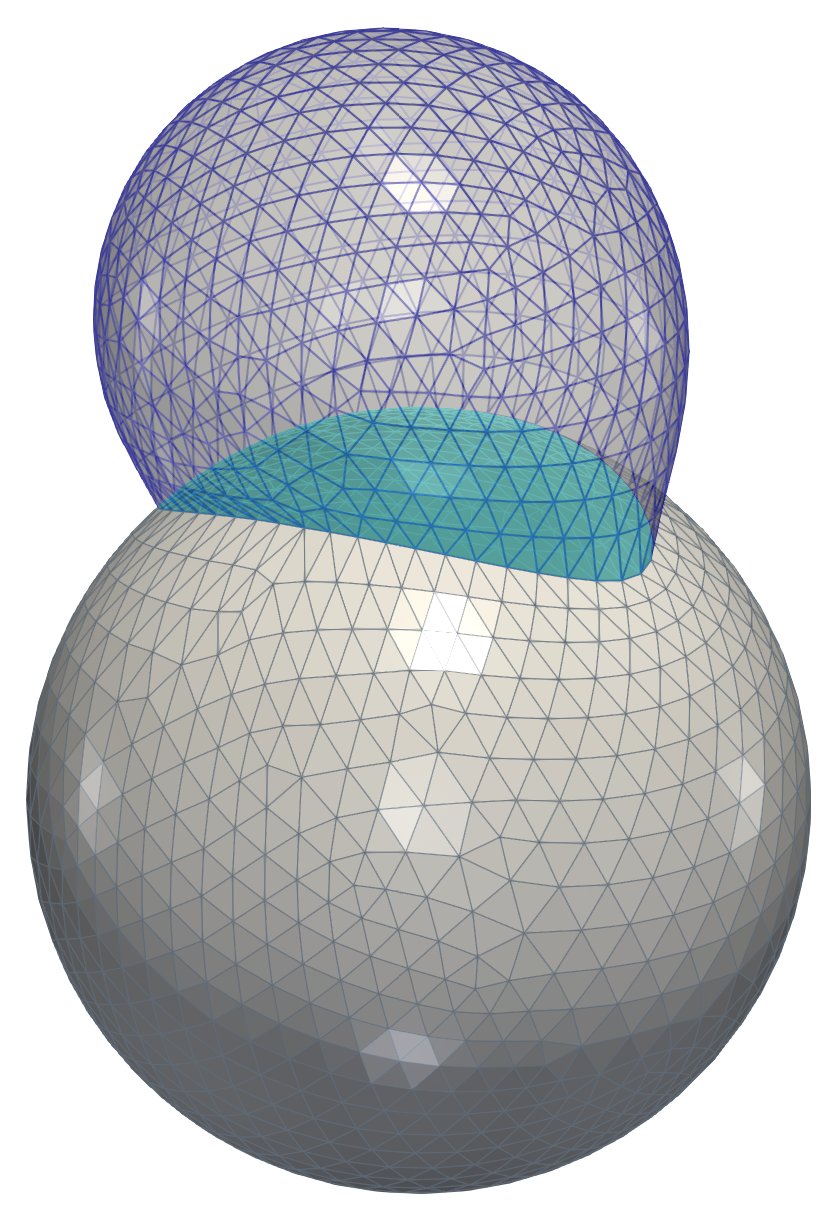}
    \par\smallskip{\small (b) \(\underline G={\rm diag}(49,0.01,1)\)\par}
  \end{minipage}\\[6pt]
  \begin{minipage}[b]{0.45\textwidth}\centering
    \includegraphics[width=0.62\linewidth]{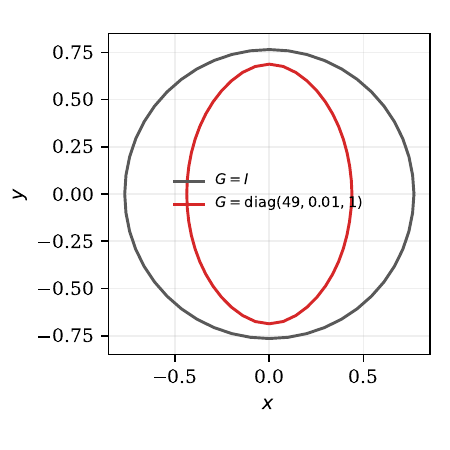}
    \par\smallskip{\small (c) contact lines, top view\par}
  \end{minipage}\qquad
  \begin{minipage}[b]{0.45\textwidth}\centering
    \includegraphics[width=0.85\linewidth]{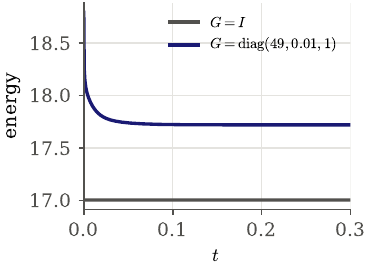}
    \par\smallskip{\small (d) total energy \(W_h^{\gamma,m}\)\par}
  \end{minipage}
  \caption{Isotropic drop on the sphere with
  \(\gamma_{\rm in}(\vec p)=\frac12\sqrt{\vec p\cdot\underline G\,\vec p}\) at
  \(T=0.3\).}
  \label{fig:aniso-sub}
\end{figure}
}
\section{Conclusions}
\label{sec:conclusion}

We have proposed parametric finite element methods for surface diffusion with
a contact line that moves on a curved substrate.
The linear schemes are
unconditionally energy stable and uniquely solvable.
Choosing the discrete substrate normals in a suitable way, that depends on the unknown solution,
leads to mildly nonlinear schemes that are still unconditionally stable and that, in addition,
can guarantee that the substrate vertices
stay exactly on the substrate.  The same holds for anisotropic film and anisotropic
substrate energies and for various discrete tangential motions: e.g.\ BGN, level-set BGN and dual-MDR.  The
numerical experiments confirm the energy decay and the nodal attachment, and compare the
level-set BGN and dual-MDR tangential motions.

\section*{Declarations}
ChatGPT-6 Astra was used to improve the language of the manuscript and to assist
with checking and debugging code.
The authors assume responsibility for all content.

\bibliographystyle{siamplain-nolinks}
\bibliography{manuscript_arxiv_version}

\end{document}